\newtheorem{thm}{Theorem}[section]
\newtheorem{lem}[thm]{Lemma}
\newtheorem{prop}[thm]{Proposition}
\newtheorem{cor}[thm]{Corollary}
\theoremstyle{definition}
\newtheorem{rem}[thm]{Remark}
\newtheorem{rems}[thm]{Remarks}
\newtheorem{exo}{Example}
\newcommand{\reth}[1]{Theorem~\protect\ref{th:#1}}
\newcommand{\relem}[1]{Lemma~\protect\ref{lem:#1}}
\newcommand{\repr}[1]{Proposition~\protect\ref{prop:#1}}
\newcommand{\reco}[1]{Corollary~\protect\ref{cor:#1}}
\newcommand{\resec}[1]{Section~\protect\ref{sec:#1}}
\newcommand{\rerem}[1]{Remark~\protect\ref{rem:#1}}
\newcommand{\rerems}[1]{Remarks~\protect\ref{rems:#1}}
\newcommand{\reqref}[1]{(\protect\ref{eq:#1})}
\newcommand{\rex}[1]{Example~\protect\ref{ex:#1}}
\newcommand{\brak}[1]{\ensuremath{\left\{ #1 \right\}}}
\newcommand{\ang}[1]{\ensuremath{\langle #1\rangle}}
\newcommand{\setangl}[2]{\ensuremath{\ang{\left. #1 \,\right\rvert \, #2}}}
\newcommand{\setl}[2]{\ensuremath{\brak{\left. #1 \,\right\rvert \, #2}}}
\newcommand{\N}{\ensuremath{\mathbb N}}
\newcommand{\Z}{\ensuremath{\mathbb Z}}
\newcommand{\D}{\ensuremath{\mathbb D}}
\newcommand{\T}{\ensuremath{\mathbb{T}^2}}
\newcommand{\K}{\ensuremath{\mathbb{K}^2}}
\newcommand{\SP}{\ensuremath{\mathbb{S}^{2}}}
\newcommand{\FF}{\ensuremath{\mathbb F}}
\newcommand{\RP}{\ensuremath{\mathbb{R}P^{2}}}
\renewcommand{\epsilon}{\ensuremath{\varepsilon}}
\renewcommand{\phi}{\ensuremath{\varphi}}
\renewcommand{\to}{\ensuremath{\longrightarrow}}
\renewcommand{\ker}[1]{\ensuremath{\operatorname{\text{Ker}}\left({#1}\right)}}
\newcommand{\im}[1]{\ensuremath{\operatorname{\text{Im}}\left({#1}\right)}}
\newcommand{\aut}[1]{\ensuremath{\operatorname{\text{Aut}}\left({#1}\right)}}
\setlist[enumerate,1]{leftmargin=0em,labelwidth=1em,wide,itemsep=-3pt,labelsep=0.5em,topsep=0.3em,align=left,labelindent*=0em,label=\textit{(\alph*)}}
\setlist[enumerate,2]{leftmargin=5em,labelwidth=1em,wide,itemsep=0em,labelsep=0.5em,topsep=0.3em,partopsep=0.3em,align=left,label=\textit{(\roman*)}}
\begin{document}

\pagestyle{myheadings}

\markboth{P.~Bellingeri, D.~Gon\c{c}alves and J.~Guaschi}{Lower central series, surface braid groups,
surjections and permutations}

\title{Lower central series, surface braid groups, surjections and permutations}

\author{Paolo Bellingeri\thanks{Normandie Univ., UNICAEN, CNRS, Laboratoire de Math\'ematiques Nicolas Oresme UMR CNRS~\textup{6139}, CS 14032, 14032 Cedex Cedex 5, France, \url{paolo.bellingeri@unicaen.fr}, \url{john.guaschi@unicaen.fr}}, Daciberg Lima Gon\c{c}alves\thanks{Departamento de Matem\'atica - IME-USP, Rua~do~Mat\~ao~1010~CEP:~05508-090 - S\~ao Paulo - SP - Brazil, \url{dlgoncal@ime.usp.br}}\, and John Guaschi\footnotemark[1]}

\date{\today}

\maketitle

\begin{abstract} 
Generalising previous results on classical braid groups  by Artin and Lin, we determine the values of $m,n\in \N$ for which there exists a surjection between the $n$- and $m$-string braid groups of an orientable surface without boundary. This result is essentially based on specific properties of their lower central series, and the proof is completely combinatorial. We provide similar but partial results in the case of orientable surfaces with boundary components and of non-orientable surfaces without boundary. We give also several results about the classification of different representations of surface braid groups in symmetric groups.
\end{abstract}

\begingroup
\renewcommand{\thefootnote}{}
\footnotetext{2010 AMS Mathematics Subject Classification: 20F14 20F14 Derived series, central series, and generalizations;  20F36 20F14 Derived series, central series, and generalizations; 20B15 Primitive groups. Keywords: surface braid groups,  lower central series, symmetric groups, representations.}
\endgroup 

\maketitle


\section{Introduction}\label{sec:intro}

In 1947, E.~Artin published two seminal papers in the Annals of Mathematics, sometimes considered as the foundation of the theory of braid groups. The paper~\cite{Art1} is devoted to a determining a presentation for the braid group $B_n$ on $n$ strings, and its interpretation in terms of automorphisms of the free group of rank $n$, while the subject of~\cite{Art2} is the study of possible homomorphisms from $B_n$ to the symmetric group $S_n$ on $n$ letters. The main result of~\cite{Art2} is the description of all \emph{transitive} homomorphisms (see \resec{other} for the definition) between $B_n$ and $S_n$. Artin considered this characterisation to be the first step in determining the group of automorphisms $\aut{B_{n}}$ of $B_{n}$, a solution of which was given in~\cite{DG}. In~\cite{L0}, Lin generalised Artin's results by characterising the homomorphisms between $B_n$ and $S_m$ and between $B_n$ and $B_m$, for all $n>m$ (see~\cite{L} for a proof of these results and a survey of this topic). Other (partial) results for homomorphisms between 
$B_n$ and $B_m$ with $n < m$ were recently obtained in~\cite{BM} and in~\cite{Ca}.

The main subject of this paper is surface braid groups and the existence of surjective homomorphisms between them. These groups generalise both Artin's braid groups and fundamental groups of surfaces. As well as their geometric interpretation, they may be defined in terms of fundamental groups of configuration spaces as follows~\cite{FoN}. Let $\Sigma$ be a compact, connected surface, with or without boundary, orientable or non orientable, and let $\FF_n(\Sigma)=\Sigma^n \setminus \Delta$, where $\Delta$ is the set of $n$-tuples $(x_1, \ldots, x_n)$ of elements of $\Sigma$ for which $x_i=x_j$ for some $1\leq i,j\leq n$, where $i \not= j$. The
fundamental group $\pi_1(\FF_n(\Sigma))$ is called the \emph{pure braid group} on $n$ strings of $\Sigma$ and shall be denoted by $P_n(\Sigma)$. The symmetric group $S_n$ acts freely on $\FF_n(\Sigma)$ by permutation of coordinates, and the fundamental group $\pi_1(\FF_n(\Sigma)/S_n)$ of the resulting quotient space, denoted by $B_n(\Sigma)$, is the \emph{braid group} on $n$ strings of $\Sigma$. Further, $\FF_n(\Sigma)$ is a regular $n!$-fold covering of $\FF_n(\Sigma)/S_n$, from which we obtain the following short exact sequence:
\begin{equation}\label{eq:permutation}
1\to P_n(\Sigma) \to B_n(\Sigma) \to S_{n}\to 1.
\end{equation}
If $\Sigma$ is the $2$-disc $\D^{2}$, it is well known that $B_{n}(\D^2)\cong B_{n}$ and that $P_{n}(\D^2)\cong P_{n}$.

The fibration of configuration spaces of a surface $\Sigma$ without boundary defined by Fadell and Neuwirth~\cite{FN} gives rise to an exact sequence involving the pure braid groups of $\Sigma$, from which one may see that the forgetful homomorphism from $P_{n+m}(\Sigma)$ to $P_n(\Sigma)$ given geometrically by forgetting $m$ strings is well defined and is a surjection. Recently, it was stated in~\cite[Theorem~1.1]{Ch} that forgetful homomorphisms are `essentially' the only possible surjections between pure braid groups of orientable surfaces, and it was conjectured that if $\Sigma$ is a compact, orientable surface of genus $g>1$, with or without boundary, and if $m,n\in \N$, where $m\neq n$, then there is no surjective homomorphism from $B_{n}(\Sigma)$ to $B_{m}(\Sigma)$~\cite[Conjecture~1.3]{Ch}. One of the aims of this paper is to study this problem for compact, connected surfaces, with or without boundary, orientable or non orientable. We summarise our main results in this direction as follows. In what follows, $\SP$ (resp.\ $\RP$) will denote the $2$-sphere (resp.\ the real projective plane), $\T$ (resp.\ $\K$) will denote the $2$-torus (resp.\ the Klein bottle), $\Sigma_{g}$ (resp.\ $\Sigma_{g,b}$) will be a compact, connected, orientable surface of genus $g\geq 0$ without boundary (resp.\ with $b\geq 1$ boundary components), and $U_{g}$ will be a compact, connected, non-orientable surface of genus $g\geq 1$ without boundary (in other words, $U_{g}$ is the connected sum of $g$ projective planes).


\begin{thm}\label{th:gensurj}
Let $m,n\in \N$ be such that $m\neq n$. 
\begin{enumerate}[wide=0em,labelsep=0.2em]
\item\label{it:main1a}
\begin{enumerate}[wide=1em]
\item\label{it:main1ai} There is a surjective homomorphism from $B_{n}(\SP)$ to $B_{m}(\SP)$ if and only if $m\in \brak{1,2}$ and $n>m$.

\item\label{it:main1aii} If $g\geq 1$, there is a surjective homomorphism from $B_{n}(\Sigma_{g})$ to $B_{m}(\Sigma_{g})$ if and only if $m=g=1$. 
\end{enumerate}


\item\label{it:main1b} Let $g\geq 1$, and let $\Sigma$ be either $\Sigma_{g,b}$, where $b\geq 1$, or $U_{g+1}$. 
Suppose that one of the following conditions holds:
\begin{enumerate}
\item\label{it:main1bi} $n<m$ and $n\in \brak{1,2}$.
\item\label{it:main1bii} $n>m$ and $m\in \brak{1,2}$.
\item\label{it:main1biii} $n>m\geq 3$ and $n\neq 4$.
\end{enumerate}
Then there is no surjective homomorphism from $B_{n}(\Sigma)$ to $B_{m}(\Sigma)$.
\end{enumerate}
\end{thm}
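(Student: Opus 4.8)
\emph{Strategy.} The basic tool is that a surjective homomorphism $\varphi\colon G\twoheadrightarrow H$ induces, for every $i\geq 1$, a surjective homomorphism $\overline{\varphi}_{i}\colon\Gamma_{i}(G)/\Gamma_{i+1}(G)\twoheadrightarrow\Gamma_{i}(H)/\Gamma_{i+1}(H)$, and that these are compatible with the bracket maps $\bigl(\Gamma_{1}/\Gamma_{2}\bigr)\otimes\bigl(\Gamma_{i}/\Gamma_{i+1}\bigr)\to\Gamma_{i+1}/\Gamma_{i+2}$; in particular $\overline{\varphi}_{1}$ is a surjection on abelianisations, and $\overline{\varphi}_{2}$ is, on the subgroup generated by brackets of the generators, the map $\Lambda^{2}\overline{\varphi}_{1}$. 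The plan is to feed into this the computations of $H_{1}(B_{k}(\Sigma))$ and $\Gamma_{2}(B_{k}(\Sigma))/\Gamma_{3}(B_{k}(\Sigma))$ established above, and, in the cases that reach a symmetric group, the description of the homomorphisms $B_{k}(\Sigma)\to S_{l}$. So assume $\varphi\colon B_{n}(\Sigma)\twoheadrightarrow B_{m}(\Sigma)$ exists. The case $n=1$ is immediate: then $B_{1}(\Sigma)=\pi_{1}(\Sigma)$ and $H_{1}(B_{1}(\Sigma))=H_{1}(\Sigma)$ has free rank $2g+b-1$ (respectively $g$, with an extra $\mathbb{Z}/2$, if $\Sigma=U_{g+1}$), whereas for every $m\geq 2$ the group $H_{1}(B_{m}(\Sigma))$ has free rank $2g+b$ (respectively $g+1$), the new free generator being the class of a braiding element; a finitely generated abelian group cannot surject onto one of strictly larger free rank, so $\overline{\varphi}_{1}$ cannot be surjective.

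For the remaining parts of \textit{(i)} (i.e.\ $n=2<m$) and for \textit{(ii)} ($m\in\{1,2\}$, $m<n$) the abelianisations have the same free rank, so $\overline{\varphi}_{1}$ is forced to be an isomorphism of the ``surface parts'' up to the rank-one braiding summand, and I would pass to $\Gamma_{2}/\Gamma_{3}$. The point is that for $n\geq 2$ the defining relations of $B_{n}(\Sigma)$ — far-commutativity and the relations tying the braiding generators to the surface generators — kill, inside $\Gamma_{2}(B_{n}(\Sigma))/\Gamma_{3}(B_{n}(\Sigma))$, many of the classes $\bar\sigma\wedge(\text{surface class})$ and a substantial part of $\Lambda^{2}H_{1}(\Sigma)$, whereas $\Gamma_{2}(\pi_{1}(\Sigma))/\Gamma_{3}(\pi_{1}(\Sigma))$ is all of $\Lambda^{2}H_{1}(\Sigma)$ and $\Gamma_{2}(B_{2}(\Sigma))/\Gamma_{3}(B_{2}(\Sigma))$, lacking the far-commutativity relations, is strictly larger still. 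Running this through the compatibility $\overline{\varphi}_{2}=\Lambda^{2}\overline{\varphi}_{1}$ on brackets of generators produces in each case a surjection that cannot exist; where the lower central series does not by itself suffice I would supplement it by composing with the projection $B_{m}(\Sigma)\to S_{m}$ and invoking the classification of the resulting homomorphisms to $S_{m}$.

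The principal case is \textit{(iii)}: $n>m\geq 3$ and $n\neq 4$. Here I would compose $\varphi$ with $B_{m}(\Sigma)\to S_{m}$ to obtain a surjection $\psi\colon B_{n}(\Sigma)\twoheadrightarrow S_{m}$. Its restriction to the standard braid subgroup $B_{n}\subset B_{n}(\Sigma)$ is a homomorphism $B_{n}\to S_{m}$; by the Artin--Lin description, for $n\geq 5$ and $m<n$ every such homomorphism has cyclic image of order at most $2$, so all $\psi(\sigma_{i})$ equal a single transposition (or are trivial). Feeding this back — together with the surjectivity of $\varphi$, the abelianisation data and the lower-central-series computations — into the classification of homomorphisms $B_{n}(\Sigma)\to S_{m}$, one is led to the conclusion that the images $\varphi(\sigma_{i})$ must be conjugate to standard generators of $B_{m}(\Sigma)$, forcing $n\leq m$, a contradiction. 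The hypothesis $n\neq 4$ is genuinely necessary: the exceptional epimorphism $B_{4}\to B_{3}$, $\sigma_{1},\sigma_{3}\mapsto\sigma_{1}$, $\sigma_{2}\mapsto\sigma_{2}$, yields a surjection $B_{4}(\Sigma)\twoheadrightarrow S_{3}$ (send the surface generators to the identity; the relations involving $\sigma_{1}^{2}$ become trivial in $S_{3}$), so the symmetric-group obstruction disappears, the lower-central-series invariants of $B_{4}(\Sigma)$ and $B_{3}(\Sigma)$ turn out to be mutually compatible, and the pair $(n,m)=(4,3)$ is precisely the one left open.

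\emph{Main obstacle.} The hard part is case \textit{(iii)}, and within it the analogue for $B_{n}(\Sigma)$ of Artin's and Lin's classification of homomorphisms to symmetric groups: one must eliminate every transitive homomorphism $B_{n}(\Sigma)\to S_{m}$ with $m<n$ apart from the standard projection and, when $n=4$, the one coming from $B_{4}\to B_{3}$, all the while controlling the images of the surface generators — whose freedom, particularly when $\pi_{1}(\Sigma)$ is free, is exactly what defeats the naive obstruction. The role of the lower central series computations is to make this control effective and to isolate the exceptional behaviour at $n=4$; by contrast cases \textit{(i)} and \textit{(ii)} rest mainly on the rank jump in $H_{1}$ between one and two strands and on the way the surface-braid relations shrink $\Gamma_{2}/\Gamma_{3}$.
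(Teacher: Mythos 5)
There are genuine gaps, both in coverage and in the individual arguments. First, your proof only ever discusses $\Sigma_{g,b}$ and $U_{g+1}$, i.e.\ part~\ref{it:main1b} of the theorem: part~\ref{it:main1a} (the sphere and the closed orientable surfaces) is an ``if and only if'' statement that also requires constructing surjections ($B_n(\SP)\to B_m(\SP)$ for $m\in\brak{1,2}$, and $B_n(\T)\to B_1(\T)$ when $g=m=1$) and ruling out all the other cases, including $B_4(\SP)\to B_3(\SP)$, which in the paper needs a torsion/finite-subgroup analysis (quaternion and dicyclic subgroups) that has no counterpart in lower central series or symmetric-group arguments; none of this appears in your sketch. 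Second, your homology computation is wrong: for $m\ge 2$ the abelianisation of $B_m(\Sigma_{g,b})$ is $\Z^{2g+b-1}\oplus\Z_2$ (relation~(\ref{eq:ab}) forces the braiding class to have order $2$), and similarly $\Z^{g}\oplus\Z_2\oplus\Z_2$ for $U_{g+1}$, so there is no jump in free rank between one and several strands. The case $n=1<m$ can be repaired via the minimal number of generators of the abelianisation, but for $n=2<m$ the abelianisations coincide entirely, and the paper must use the quotient $\Z^{2g+b-1}\times S_m$ (as in \repr{mingen}) to separate $G(B_2)$ from $G(B_m)$; your proposed passage to $\Gamma_2/\Gamma_3$ does not obviously give anything here, since $\Gamma_2(B_m(\Sigma_{g,b}))/\Gamma_3\cong\Z$ for $m\ge3$ and the structure of $\Gamma_2(B_2(\Sigma_{g,b}))/\Gamma_3$ is only known up to being a quotient of $\Z_2^{2g+b-1}\oplus\Z$.

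The same imprecision undermines case \textit{(ii)}: for $n\ge3$ and $m\in\brak{1,2}$ the comparison of $\Gamma_2/\Gamma_3$ fails in general (e.g.\ for $\Sigma_{1,1}$ and $m=1$ both quotients are $\Z$; for $m=2$ the target quotient is not known precisely enough to exclude a surjection from $\Z$). The paper's actual mechanism is different: $\Gamma_3(B_n)=\Gamma_4(B_n)$ for $n\ge3$ forces any quotient to have stabilising lower central series, which contradicts the fact that $B_1$ and $B_2$ are residually nilpotent but not nilpotent; and the low-genus exceptions $\Sigma_{1,1}$ (for $n=2$, $m=1$) and the Klein bottle $U_2$ require the bespoke Lemmas~\ref{lem:gb1} and~\ref{lem:garrafe}, which your argument would also need. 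Finally, in case \textit{(iii)} your first move (compose with $B_m(\Sigma)\to S_m$ and apply Lin to the restriction to $B_n$) is the right one, but Lin only gives that the image of $B_n$ is cyclic, not ``of order at most $2$'', and the conclusion you then assert (that the $\varphi(\sigma_i)$ are conjugate to standard generators of $B_m(\Sigma)$, ``forcing $n\le m$'') is not an argument. The missing step is short but essential: once $\psi(\sigma_1)=\cdots=\psi(\sigma_{n-1})$, relations~(\ref{eq:cs}) (resp.~(\ref{eq:rs1})) make this common element commute with the images of the surface generators, so surjectivity forces it to be central in $S_m$, hence trivial since $m\ge3$; then $\psi$ factors through $B_n(\Sigma)/\langle\!\langle\sigma_1\rangle\!\rangle$, which is abelian, contradicting surjectivity onto $S_m$. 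Without this step, and without the items above, the proposal remains a strategy outline rather than a proof.
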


Parts~\ref{it:main1a}\ref{it:main1ai} and~\ref{it:main1a}\ref{it:main1aii} of \reth{gensurj} will be proved in \resec{sphere} and \resec{suror} respectively, and 
part~\ref{it:main1b} will be proved in \resec{orboundary} in the orientable case, and in \resec{nonor} in the non-orientable case. For the case of the projective plane, in \reth{rp}, using the knowledge of the torsion of its braid groups, we will obtain results that are slightly stronger than those of \reth{gensurj}\ref{it:main1b}, notably with respect to the case where $n<m$. 
\reth{gensurj} proves~\cite[Conjecture~1.3]{Ch} completely in the case where the surface is orientable and without boundary, and partially in the case where the surface orientable with boundary, or non-orientable and without boundary. The cases not covered by the conditions~\ref{it:main1bi}--\ref{it:main1biii} of part~\ref{it:main1b} are likely to be difficult. Note that even in the case of the Artin braid groups, the question of whether there exists a surjective homomorphism from $B_{n}$ onto $B_{m}$ remains open in many of these cases.
As a consequence of \reth{gensurj} and some basic facts about the lower central series of surface braid groups, we give an elementary proof of~\cite[Theorem~1.2]{Ch} in \reco{pure}, and we generalise the result of this corollary to the case of orientable surfaces with boundary (\reco{pureboundary}), and to the non-orientable case (\reco{purenor}). In the cases of the sphere and real projective plane, the techniques are somewhat different to those used for other surfaces, since their braid groups have torsion~\cite{FV,VB}.

Another interesting and open problem is the study of possible surjective homomorphisms between braid groups of different surfaces. One important case occurs when the domain is a braid group of a non-orientable surface $U_{g}$, and the target is a braid group of the orientable double covering $\Sigma_{g-1}$. It is known that there exists a natural injection on the level of configuration spaces that induces an injective homomorphism between $B_{n}(U_{g})$ and $B_{2n}(\Sigma_{g-1})$~\cite{GGjlms}. In \resec{nonor}, we prove the following result concerning surjections when the number of strings is the same.

\begin{prop}\label{prop:ornor}
Let $n,g\ge1$. Then there exists a surjective homomorphism of $B_n(U_g)$ onto $B_n(\Sigma_{g-1})$ if and only if $g=1$ and $n\in \brak{1,2}$.
\end{prop}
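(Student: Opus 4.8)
The plan is to split the statement into the easy "if" direction and the harder "only if" direction, and to reduce everything to abelianisations and lower central series quotients, exactly as in the proofs of Theorem~\ref{th:gensurj}. For the "if" direction, when $g=1$ we have $U_1=\RP$ and $\Sigma_0=\SP$, and for $n\in\brak{1,2}$ both $B_n(\RP)$ and $B_n(\SP)$ are finite groups (of orders $4$ or $2$ and $12$ or $2$ respectively for small $n$), so one exhibits the surjection directly: for $n=1$, $\pi_1(\RP)=\Z/2\Z$ surjects onto $\pi_1(\SP)=1$; for $n=2$, one checks that $B_2(\RP)$ (a group of order~$16$) surjects onto $B_2(\SP)\cong\Z/2\Z$, which is immediate since the target is cyclic of order~$2$ and the source has even order with non-trivial abelianisation. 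So the content is entirely in the "only if" part.

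For the "only if" direction, suppose $\varphi\colon B_n(U_g)\twoheadrightarrow B_n(\Sigma_{g-1})$ is surjective. First I would rule out $g\geq 2$. Here the target $B_n(\Sigma_{g-1})$ has a surface subgroup structure with free abelianisation and, crucially, known lower central series behaviour: for $g-1\geq 2$ the group $B_n(\Sigma_{g-1})$ is residually nilpotent-free enough that one can read off rank information from $\Gamma_1/\Gamma_2$ and $\Gamma_2/\Gamma_3$. A surjection induces surjections on each successive quotient $\Gamma_i/\Gamma_{i+1}$, so I would compare $(B_n(U_g))^{\mathrm{ab}}$ with $(B_n(\Sigma_{g-1}))^{\mathrm{ab}}$: the former has a $\Z/2\Z$-summand coming from the non-orientable generators reducing the free rank, while the latter is free abelian of rank $2(g-1)+1=2g-1$ (for $n\geq 2$). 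One shows that the free rank of $(B_n(U_g))^{\mathrm{ab}}$ is strictly smaller than $2g-1$ for all $g\geq 2$, which already obstructs the surjection on abelianisations, hence on the groups. This handles $g\geq 2$ for all $n$.

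It remains to treat $g=1$, i.e. surjections $B_n(\RP)\twoheadrightarrow B_n(\SP)$, and to show no such surjection exists for $n\geq 3$. Here the abelianisation argument is too weak since both groups are finite of small abelianisation, so I would instead use the torsion, as the paragraph before Theorem~\ref{th:rp} suggests: $B_n(\SP)$ for $n\geq 3$ contains specific torsion elements (coming from the finite subgroups of the sphere braid groups classified in~\cite{FV}), and one compares the orders of torsion elements, or the structure of the centre, or the abelianisation (which for $n\geq 3$ is $\Z/(2n-2)\Z$ for the sphere versus a bounded-order group for $\RP$, since $B_n(\RP)^{\mathrm{ab}}$ stabilises), to derive a contradiction. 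Concretely, $(B_n(\SP))^{\mathrm{ab}}\cong\Z/(2(n-1))\Z$, whereas $(B_n(\RP))^{\mathrm{ab}}$ is a quotient of $\Z/2\Z\oplus\Z/2\Z$ (independent of $n$ for $n$ large), so a surjection would force $2(n-1)\mid 4$, leaving only $n\in\brak{1,2,3}$; the residual case $n=3$ is then eliminated by a direct order or torsion comparison between $B_3(\RP)$ and $B_3(\SP)$.

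The main obstacle I anticipate is the $g=1$, small-$n$ endgame: for $g\geq 2$ the free-rank comparison of abelianisations is clean and uniform, but for $\RP$ versus $\SP$ the abelianisations are finite and nearly collide in small degree, so one genuinely has to invoke the fine structure of the torsion of $B_n(\SP)$ and $B_n(\RP)$ from~\cite{FV,VB} — and in particular handle $n=3$ by hand — rather than any soft lower-central-series argument.
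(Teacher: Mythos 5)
Your overall strategy is the paper's: both directions are handled by comparing abelianisations, and your treatment of the easy direction ($g=1$, $n\in\brak{1,2}$) and of the case $g\ge 2$ is essentially identical to the paper's proof. Two points deserve correction, though neither destroys the argument. First, a detail: for $n\ge 2$ the abelianisation of $B_n(\Sigma_{g-1})$ is $\Z^{2(g-1)}\oplus\Z_2$ (Theorem~\ref{th:gam3closed}\ref{it:gam12g}), not free abelian of rank $2g-1$; your free-rank comparison $g-1<2(g-1)$ for $g\ge 2$ still gives the obstruction, so this is harmless.

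Second, and more substantively, your assessment of the $g=1$, $n\ge 3$ endgame is off. You claim the abelianisation argument is ``too weak'' there and only yields $2(n-1)\mid 4$, forcing a separate torsion analysis for $n=3$. In fact the data you already quote finishes every case at once: $(B_n(\RP))_{\text{Ab}}\cong\Z_2\oplus\Z_2$ has exponent $2$, so it cannot surject onto the cyclic group $\Z_{2(n-1)}\cong(B_n(\SP))_{\text{Ab}}$ as soon as $2(n-1)\ge 4$, i.e.\ for all $n\ge 3$, including $n=3$ (where the target is $\Z_4$). This is exactly the paper's argument, and no appeal to the finite subgroups or torsion of $B_n(\SP)$ is needed. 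Moreover, the fallback you propose for $n=3$, a ``direct order comparison between $B_3(\RP)$ and $B_3(\SP)$'', would not work as stated: $B_3(\RP)$ is infinite (only $B_1(\RP)$ and $B_2(\RP)$ are finite), so there is no order obstruction at the level of the groups themselves; the obstruction lives in the abelianisations, via exponent rather than order. With that repair your plan coincides with the paper's proof.
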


Together with surjections between surface braid groups, another of our aims is to characterise homomorphisms between surface braid groups and symmetric groups following the approaches of~\cite{Iv,L0,L}. One of the main results of~\cite{L} is the following.
 
\begin{thm}[{\cite[Theorem~A]{L}}]\label{th:Lin}
Let $n>m\ge 3$ and $n\not=4$. Any homomorphism $\phi\colon\thinspace B_n \to S_m$ is cyclic, i.e.\ $\phi(B_n)$ is a cyclic group.
\end{thm}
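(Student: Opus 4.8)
The plan is to reduce the statement to a combination of two facts about the braid group $B_n$ for $n > m \geq 3$ and $n \neq 4$: first, that $B_n$ has no subgroup of index less than $n$ other than the obvious one (so that the permutation action of $B_n$ on the cosets of any proper subgroup of small index is severely constrained), and second, that the commutator subgroup $[B_n, B_n]$ is perfect in this range. Recall the standard presentation of $B_n$ with generators $\sigma_1, \ldots, \sigma_{n-1}$ subject to the braid relations; the abelianisation is $\Z$, with every $\sigma_i$ mapping to a generator, so any homomorphism to an abelian group is automatically cyclic. Hence it suffices to show that $\phi([B_n, B_n]) = \{1\}$, i.e.\ that the restriction of $\phi$ to the commutator subgroup is trivial.

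First I would recall (or cite) Artin's classical result on the possible actions: if $H \leq B_n$ has index $k < n$, then $k = 1$; and the only transitive homomorphism $B_n \to S_n$ up to conjugacy, apart from the permutation homomorphism $B_n \to S_n$ of \reqref{permutation} (with $\Sigma = \D^2$) and cyclic ones, is related to the standard one. The key point is that for $n \neq 4$ the symmetric group $S_n$ has no transitive subgroup action of degree $m < n$ arising from $B_n$ in a non-cyclic way: the image $\phi(B_n) \leq S_m$ acts on $\{1, \ldots, m\}$, and decomposing this action into orbits, on each orbit of size $k \leq m < n$ we get a transitive homomorphism $B_n \to S_k$. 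By Artin's analysis of transitive representations of $B_n$ in symmetric groups of degree less than $n$ (the degree-$n$ case being the borderline, and $n = 4$ being genuinely exceptional because of the triality-type phenomenon $S_4 \to S_3$), each such homomorphism must be cyclic, hence factors through $B_n / [B_n, B_n] \cong \Z$.

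Next I would assemble these orbit-wise conclusions: if $\phi$ restricted to each orbit is cyclic, then $\phi([B_n, B_n])$ acts trivially on every orbit, hence trivially on $\{1, \ldots, m\}$, so $\phi([B_n, B_n]) = \{1\}$ and $\phi$ itself is cyclic. The reduction to transitive actions on orbits of size at most $m$ is routine; the real content is the classification of transitive homomorphisms $B_n \to S_k$ for $k \leq m < n$. Here one uses that $\sigma_1, \ldots, \sigma_{n-1}$ are all conjugate in $B_n$, so their images are conjugate permutations of the same cycle type; combined with the braid relations $\sigma_i \sigma_{i+1} \sigma_i = \sigma_{i+1}\sigma_i \sigma_{i+1}$ and commutation of non-adjacent generators, and with the transitivity and the degree bound, one forces the images of the $\sigma_i$ to lie in a common cyclic group — this is exactly the combinatorial heart of Lin's (and originally Artin's) argument.

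The main obstacle is precisely this last step: controlling transitive homomorphisms $B_n \to S_k$ for all $3 \leq k \leq m < n$, and in particular seeing why $n = 4$ must be excluded (the exceptional outer behaviour of $S_4$, via its normal Klein four-subgroup and the quotient $S_4/V_4 \cong S_3$, yields a genuinely non-cyclic transitive homomorphism $B_4 \to S_3$). Everything else — the reduction to the commutator subgroup via the abelianisation $B_n \to \Z$, the orbit decomposition, and the assembly of the orbit-wise triviality into global triviality on $[B_n, B_n]$ — is comparatively formal. Since the theorem is quoted from~\cite{L}, in the paper itself one simply cites it; the sketch above indicates the structure of the proof given there.
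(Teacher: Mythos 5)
The paper gives no proof of this statement at all: it is imported verbatim as Lin's Theorem~A, so the only ``proof'' in the paper is the citation, and your proposal likewise leaves the genuine content --- that every transitive homomorphism $B_n \to S_k$ with $3 \le k < n$ and $n \neq 4$ is cyclic --- to Artin and Lin rather than proving it. Your surrounding reduction is a correct outline of that strategy (orbit decomposition, cyclicity on each orbit forcing $\phi([B_n,B_n])=\{1\}$ so that $\phi$ factors through $(B_n)_{\text{Ab}}\cong\Z$ and hence has cyclic image, and the $n=4$ exception explained by $S_4/V_4\cong S_3$), so your treatment is consistent with the paper's citation-level handling of the result.
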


This implies that if $n>m\ge 3$ and $n\not=4$, there is no surjective homomorphism from $B_n$ onto $S_m$. We shall show that \reth{Lin} also holds for braid groups of compact surfaces without boundary.

\begin{thm}\label{th:LinSurfcomb}
Let $n>m\ge 2$, let $g \ge 0$, and let $\Sigma$ be either $\Sigma_{g}$ or $U_{g+1}$. Then there is a surjective homomorphism from $B_n(\Sigma)$ onto $S_m$ if and only if either $m=2$, or $(n,m)=(4,3)$.  
\end{thm}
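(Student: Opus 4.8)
The plan is to reduce the statement to Lin's Theorem~\ref{th:Lin} together with the exceptional low-string cases, by exploiting the abelianisation and lower central series data for surface braid groups. First I would dispose of the ``if'' direction: when $m=2$, the composite $B_n(\Sigma)\to S_n\to S_2$ given by the sign representation (the short exact sequence~\reqref{permutation} composed with the abelianisation of $S_n$) is surjective, since $n\geq 3$; and for $(n,m)=(4,3)$ one writes down the classical surjection $B_4\to S_3$ and precomposes with a surjection $B_4(\Sigma)\to B_4$ (which exists for $\Sigma$ without boundary, e.g.\ via collapsing the surface to a disc, or one argues directly as in the classical case). So the bulk of the work is the ``only if'' direction.

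For the ``only if'' direction, suppose $\phi\colon B_n(\Sigma)\twoheadrightarrow S_m$ with $n>m\geq 3$. The key reduction is to show that $\phi$ factors through the Artin braid group $B_n$, or at least through a quotient to which Lin's theorem applies. The mechanism is the lower central series: for a surface $\Sigma$ without boundary (and here we should treat $\Sigma_g$ with $g\geq 1$, $U_{g+1}$, and the sphere $\SP$ and $\RP^2$ separately, since the latter have torsion), the normal closure in $B_n(\Sigma)$ of the ``surface generators'' — roughly, the image of $\pi_1(\Sigma)$ — should be killed by $\phi$. More precisely, I would use the facts about $B_n(\Sigma)/\Gamma_2$ and $B_n(\Sigma)/\Gamma_3$ developed earlier in the paper to show that the restriction of $\phi$ to the subgroup generated by the surface-type generators has solvable, in fact metabelian or nilpotent, image, and combine this with the structure of $S_m$ (which for $m\geq 5$ has no nontrivial solvable normal subgroup, and for $m\in\{3,4\}$ the solvable normal subgroups are well understood) to conclude that these generators map into a small, controllable subgroup of $S_m$. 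One then shows that a generator of $\pi_1(\Sigma)$ must in fact map to the identity: since surface braid generators are not torsion (in the orientable case without boundary of genus $\geq 1$, and with appropriate modifications for $U_{g+1}$), and their images would have to generate a normal subgroup that, being a quotient of an abelian-by-(something small) group inside $S_m$, forces triviality when $m\geq 3$ and $n>m$. Once the surface generators die, $\phi$ descends to a homomorphism $B_n = B_n(\D^2) \to S_m$ whose image equals $\phi(B_n(\Sigma))=S_m$, contradicting Theorem~\ref{th:Lin} unless $n=4$. In the remaining case $n=4$, $m=3$ (the case $n=4,m\geq 4$ being excluded by $n>m$), one checks directly that the only surjections are those already exhibited, again using the low-degree lower central series quotients of $B_4(\Sigma)$.

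For the sphere and the projective plane the argument is similar but one must be careful because $B_n(\SP)$ and $B_n(\RP^2)$ have torsion and smaller abelianisations; here I would use the known presentations and the known structure of $B_n(\SP)/\Gamma_2$, $B_n(\RP^2)/\Gamma_2$ to run the same factorisation argument, noting that any surjection onto $S_m$ must be compatible with the torsion, and that the surface relation (the full-twist-type relation in $B_n(\SP)$, resp.\ the relation of order $4n(n-1)$ in $B_n(\RP^2)$) forces the image of the corresponding element to have order dividing a fixed integer; combined with Lin-type rigidity for $B_n$ this again forces the map to factor through $B_n$ and hence through $S_m$, so the same conclusion holds. A uniform way to organise all cases is: (1) show $\phi$ sends the kernel of the natural surjection $B_n(\Sigma)\to B_n$ (when such a surjection exists) or the normal closure of the $\pi_1(\Sigma)$-generators into a solvable normal subgroup of $S_m$; (2) show that solvable normal subgroup is trivial using $m\geq 3$ and the non-torsion (or controlled torsion) of those generators; (3) apply Theorem~\ref{th:Lin} to the resulting map $B_n\to S_m$.

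The main obstacle I expect is step~(2): showing that the images of the surface generators are actually trivial, not merely contained in a small normal subgroup. This is where the precise lower central series computations from the earlier sections are essential — in particular one needs that in $B_n(\Sigma)$ a generator coming from $\pi_1(\Sigma)$ is non-trivial in $\Gamma_2/\Gamma_3$ or interacts with the braid generators $\sigma_i$ via commutators in a way that cannot survive in an abelian-by-small quotient of $S_m$ — and for the sphere and projective plane one additionally has to control the torsion relations. The exceptional case $n=4$ also requires a genuinely separate, finite check, paralleling Lin's treatment of $B_4\to S_3$, and one must verify that passing from $B_4$ to $B_4(\Sigma)$ does not create new surjections, which again follows from the lower central series data since $B_4(\Sigma)\to B_4$ induces an isomorphism (or controlled map) on the relevant nilpotent quotients.
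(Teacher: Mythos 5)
Your reduction has the right ingredients (Lin's \reth{Lin}, the surface presentations, a centrality/abelianness argument), but the ``only if'' direction as you organise it has a genuine gap: everything hinges on the claim that a surjection $\phi\colon B_n(\Sigma)\to S_m$ must kill the normal closure of the surface generators, and you give no actual argument for this --- the sketched mechanism (solvable/nilpotent image of the surface-generator subgroup, ``abelian-by-small'' normal subgroups of $S_m$, non-torsion of the generators) is not a proof and does not obviously close. The workable argument goes the other way round: first apply \reth{Lin} to the composite $B_n\to B_n(\Sigma)\to S_m$ (the subgroup generated by the $\sigma_i$), which forces $\phi(\sigma_1)=\cdots=\phi(\sigma_{n-1})=\sigma$; relations \reqref{cs} (resp.\ \reqref{rs1}) show $\sigma$ commutes with the images of the surface generators, so surjectivity puts $\sigma$ in the centre of $S_m$, hence $\sigma=1$; then $\phi$ factors through $B_n(\Sigma)/\langle\!\langle\sigma_1\rangle\!\rangle$, which is abelian ($\Z^{2g}$ for $\Sigma_g$, and abelian via \reqref{rs2} for $U_{g+1}$), contradicting surjectivity onto the non-abelian $S_m$. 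For the sphere it is even simpler: $B_n(\SP)$ is a quotient of $B_n$, so a surjection onto $S_m$ would lift to one from $B_n$, again contradicting \reth{Lin}. Note also that your identification of the quotient by the surface generators with $B_n$ is wrong: killing the $a_i,b_i$ (resp.\ $\rho_i$) forces $\sigma_1^2=1$ by \reqref{ab} (resp.\ \reqref{rr}) and turns \reqref{tot} into the sphere-type relation, so the quotient is $S_n$, not $B_n=B_n(\D^2)$; this does not by itself sink the strategy (the restriction of $\phi$ to the $\sigma_i$-subgroup would still be onto if the surface generators died), but it removes the intended ``factorisation through $B_n$''.

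The ``if'' direction also needs repair: there is no surjection $B_4(\Sigma)\to B_4$ in general --- collapsing the surface to a disc does not induce a homomorphism of braid groups, and for $\SP$ such a surjection is impossible since $(B_4(\SP))_{\text{Ab}}\cong\Z_6$ is finite while $(B_4)_{\text{Ab}}\cong\Z$. The correct construction is the direct one: define $\rho_{4,3}$ on the generators of \reth{presbng} (resp.\ \reth{presnor}) by sending the surface generators to the identity, $\sigma_1,\sigma_3\mapsto(1,2)$, $\sigma_2\mapsto(2,3)$, and check the relations, in particular \reqref{tot}, \reqref{totsph} or \reqref{totnor}, whose right-hand side maps to the identity. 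Your $m=2$ construction via the sign of the permutation homomorphism is fine.
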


If $g\geq 1$ (resp.\ $g=0$) and $\Sigma=\Sigma_{g}$, the statement of \reth{LinSurfcomb} will be proved in \resec{suror} (resp.\ in \resec{sphere}), while in the case $g\geq 0$ and  $\Sigma=U_{g+1}$, the result will be proved in \resec{nonor}. 
\medskip

Let $g \geq 0$,
and let $n>m \geq 1$. We recall that a representation $\rho_{n,m}\colon\thinspace  B_n(\Sigma_g) \to S_m$ is said to be \emph{transitive} if the action of the image $\im{\rho_{n,m}}$ of $\rho_{n,m}$ on the set $\brak{1,\ldots, m}$ is transitive and is \emph{primitive} if the only partitions of this set that are left invariant by the action of  $\im{\rho_{n,m}}$ are the set itself, or the partition consisting of singletons.  By abuse of notation, we say that a subgroup  of $S_m$ is primitive if its action on the set $\brak{1,\ldots, m}$ is primitive.
Notice that, if $m>2$, a primitive representation is clearly transitive. 
 \medskip

 Inspired by Artin's characterisation of (transitive) homomorphisms between $B_n$ and $S_{n}$, Ivanov determined all of the homomorphisms between $B_n(\Sigma_{g,b})$ and $S_{n}$, but under the stronger assumption that the homomorphisms are primitive~\cite[Theorem~1]{Iv}. We prove the following theorem for homomorphisms between $B_n(\Sigma_{g}) $ and $ S_m$ when $n>m$. This result may also be compared with the classification of the homomorphisms between $B_n$ and $S_{m}$, where $n>m$, given in \reth{LinTrans}.


\begin{thm}\label{th:LinSurfaceTrans}
Let $n>m\geq 2$, and let $g\geq 1$. There exists a primitive representation
$\rho_{n,m}\colon\thinspace $ $B_n(\Sigma_{g}) \to S_m$ if and only if $m$ is prime. This being the case, one of the following statements holds:
\begin{enumerate}
\item\label{it:LinSurfaceTransa} the image $\im{\rho_{n,m}}$ of $\rho_{n,m}$ is generated by an $m$-cycle, unless $m=2$, in which case $\im{\rho_{n,2}}$ can also be equal to $\brak{\operatorname{Id}}$.


\item\label{it:LinSurfaceTransb} $n=4$ and $m=3$, and up to a suitable renumbering of the elements of the set $\brak{1, 2, 3}$,
$\rho_{4,3}(\sigma_1)=\rho_{4,3}(\sigma_3)=(1,2)$, $\rho_{4,3}(\sigma_2)=(2,3)$, and for all $1\leq i\leq g$, the permutations $\rho_{4,3}(a_i)$ and $\rho_{4,3}(b_{i})$ are trivial, where $\brak{\sigma_1, \sigma_2,\sigma_3, a_1,b_1, \ldots, a_g,b_g}$ is the generating set of $B_4(\Sigma_{g})$ given in the statement of \reth{presbng}.
\end{enumerate}
\end{thm}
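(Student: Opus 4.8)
The plan is to study $\rho=\rho_{n,m}$ through its restriction to the braid generators $\sigma_1,\dots,\sigma_{n-1}$, using Lin's theorem (\reth{Lin}), and to exploit the relations of \reth{presbng} which assert that every surface generator $a_r$, $b_r$ commutes with $\sigma_i$ for all $i\ge 2$.

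For the backward implication and the realisability of the two cases: when $m$ is prime, $H_1(B_n(\Sigma_g))$ is infinite (because $g\ge1$), so there is a surjection $B_n(\Sigma_g)\to\Z$; composing it with $\Z\to\Z/m$ and the embedding of $\Z/m$ into $S_m$ as the group generated by an $m$-cycle yields a representation whose image acts regularly, hence primitively since $m$ is prime. When $m=2$ the constant map is (vacuously) primitive, which accounts for the extra possibility in part~\ref{it:LinSurfaceTransa}. For part~\ref{it:LinSurfaceTransb} one defines $\rho_{4,3}$ on the generating set of \reth{presbng} by the stated formulas and checks compatibility with each defining relation: those among the $\sigma_i$ reduce to $(1,2)(2,3)(1,2)=(2,3)(1,2)(2,3)$ and to trivialities, and any relation involving some $a_r$ or $b_r$ collapses since $\rho_{4,3}(a_r)=\rho_{4,3}(b_r)=\operatorname{Id}$; in particular, killing all the surface generators turns the surface relation into the identity $\operatorname{Id}=(1,2)(2,3)(1,2)^{2}(2,3)(1,2)$, which holds. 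Since $\im{\rho_{4,3}}=S_3$ is $2$-transitive, it is primitive.

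For the forward implication, let $\rho$ be primitive with $n>m\ge2$, set $H=\im{\rho}$ and $c_i=\rho(\sigma_i)$. The $c_i$ satisfy the braid relations, so they define a homomorphism $B_n\to S_m$ with image $\langle c_1,\dots,c_{n-1}\rangle$; since any homomorphism into $S_2$ is cyclic and, by \reth{Lin}, so is any homomorphism $B_n\to S_m$ with $n>m\ge3$ and $n\ne4$, the only case in which this image can be non-cyclic is $(n,m)=(4,3)$. Suppose first that $\langle c_1,\dots,c_{n-1}\rangle$ is cyclic; then the $c_i$ commute, so the braid relation gives $c_i^{2}c_{i+1}=c_ic_{i+1}^{2}$, whence $c_i=c_{i+1}$ and all the $c_i$ equal a single element $c$. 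As $n\ge3$, the elements $\rho(a_r)$ and $\rho(b_r)$ commute with $c=\rho(\sigma_2)$, so $c\in Z(H)$. If $c=\operatorname{Id}$, then $\rho$ factors through $B_n(\Sigma_g)/\langle\!\langle\sigma_1\rangle\!\rangle$, which is isomorphic to $\Z^{2g}$ — this can be read off from \reth{presbng}, or recognised as the fundamental group of the $n$-th symmetric product of $\Sigma_g$ — so $H$ is abelian. If $c\ne\operatorname{Id}$ and $m\ge3$, then $Z(H)\ne1$, and the classical facts that a primitive group is transitive, that a non-trivial normal subgroup of a primitive group is transitive, that a transitive abelian group is regular, and that a point stabiliser centralising a regular normal subgroup is trivial, force $H=Z(H)$ to be regular and abelian; while if $m=2$ then $H\le S_2$ is abelian anyway. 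Thus $H$ is in every case an abelian primitive subgroup of $S_m$, hence either trivial with $m=2$, or a regular cyclic group of prime order $m$ generated by an $m$-cycle. This is part~\ref{it:LinSurfaceTransa}, and in particular $m$ is prime.

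It remains to treat $(n,m)=(4,3)$ with $\langle c_1,c_2,c_3\rangle$ non-cyclic; then $m=3$ is prime and $\langle c_1,c_2,c_3\rangle=S_3$, since its proper subgroups are cyclic. The $3$-cycles of $S_3$ generate only $A_3$, so each $c_i$ is a transposition; the relation $c_1c_3=c_3c_1$ forces $c_1=c_3$ (two distinct transpositions of $S_3$ do not commute) and $c_2\ne c_1$, so after renumbering $\brak{1,2,3}$ we may assume $c_1=c_3=(1,2)$ and $c_2=(2,3)$. Finally, $\rho(a_r)$ and $\rho(b_r)$ centralise both $(2,3)$ and $(1,2)$, and $C_{S_3}((2,3))\cap C_{S_3}((1,2))=\brak{\operatorname{Id}}$, so $\rho(a_r)=\rho(b_r)=\operatorname{Id}$ for every $r$, which is part~\ref{it:LinSurfaceTransb}. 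I expect the only genuinely delicate point to be the verification in part~\ref{it:LinSurfaceTransb} that the prescribed values are compatible with every relation of the presentation of \reth{presbng}; the rest of the argument is Lin's theorem combined with the classical structure theory of primitive permutation groups with non-trivial centre.
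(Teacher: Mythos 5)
Your proposal is correct in substance and reaches exactly the classification in the statement, but it takes a genuinely different route from the paper. For $n>m\ge 3$, $n\ne 4$, the paper also uses \reth{Lin} to equalise the $\rho_{n,m}(\sigma_i)$ to a common element $\sigma$ commuting with all $\rho_{n,m}(a_i),\rho_{n,m}(b_i)$, but it then argues via invariant partitions: if $\sigma$ is not an $m$-cycle its cycle partition is preserved by the whole image, so primitivity forces $\sigma=\operatorname{Id}$, the map factors through $\Z^{2g}$, and non-$m$-cycle elements of the (abelian) image are excluded using the explicit description of centralisers in \repr{berrickparis}; the case $(n,m)=(4,3)$ is settled by quoting \reth{LinTrans}. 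You replace both of these inputs by classical primitive permutation group theory applied to the centre of the image (the common value $c$ is central; either $c=\operatorname{Id}$ and the image is an abelian quotient of $B_n(\Sigma_g)/\langle\!\langle\sigma_1\rangle\!\rangle\cong\Z^{2g}$, or the centre is a non-trivial normal, hence transitive, subgroup, forcing the image to be regular abelian; an abelian primitive subgroup of $S_m$ is trivial with $m=2$ or cyclic of prime order generated by an $m$-cycle). This avoids \repr{berrickparis} and \reth{LinTrans} altogether, delivers the primality of $m$ in one stroke, and you also spell out the existence ("if") direction via a surjection onto $\Z$ followed by $\Z_m$ embedded as an $m$-cycle, which the paper leaves implicit; the paper's centraliser argument, on the other hand, is what it reuses later for the non-primitive analysis in \repr{contraints}.

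One step needs repair. In the $(4,3)$ non-cyclic case, the assertion ``the $3$-cycles of $S_3$ generate only $A_3$, so each $c_i$ is a transposition'' only shows that at least one $c_i$ is odd; it does not by itself exclude a mixture such as one $c_i$ a $3$-cycle (or the identity) and an adjacent one a transposition. To conclude that all three are transpositions you need the parity consequence of the braid relation: $\operatorname{sgn}(c_ic_{i+1}c_i)=\operatorname{sgn}(c_{i+1})$ and $\operatorname{sgn}(c_{i+1}c_ic_{i+1})=\operatorname{sgn}(c_i)$, so adjacent $c_i,c_{i+1}$ have the same signature, hence all $c_i$ do; since they cannot all be even (the group generated is $S_3$), all are transpositions. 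This is exactly the cycle-type comparison carried out in the paper's proof of \reth{LinTrans}, and it is a one-line fix; the rest of your argument, including the verification of the relations of \reth{presbng} for the exceptional representation $\rho_{4,3}$ and the identification $C_{S_3}((1,2))\cap C_{S_3}((2,3))=\brak{\operatorname{Id}}$, is sound.
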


More information about arbitrary (not necessarily primitive) representations of $B_n(\Sigma_{g})$ in $S_m$ is given in \repr{contraints} and the examples that follow it.

The rest of this paper is divided into four sections. Sections~\ref{sec:closed} and~\ref{sec:orboundary} deal with the braid groups of compact, orientable surfaces without boundary and with boundary respectively, \resec{nonor} is devoted to the braid groups of compact, non-orientable surfaces without boundary. In each of these sections, we give a presentation of the braid groups in question, we recall some known results about their lower central series and whether they are residually nilpotent or not, and we prove the relevant parts of Theorems~\ref{th:gensurj} and~\ref{th:LinSurfcomb}. In \resec{other} we explore representations of surface braid groups in symmetric groups. In particular, in \reth{LinSurfaceTrans}, we classify primitive representations, a result that was more or less implicitly expected in~\cite{Iv}, and in \repr{contraints} we give some constraints on general (non-primitive) homomorphisms, and we answer a question of~\cite{Iv} by providing some examples of transitive, non-primitive, non-Abelian representations.

In this paper, we do not discuss the braid groups of non-orientable surface with boundary components. This choice is motivated by two different considerations, first that these groups have rarely been studied in the literature, and secondly, that the techniques used in the case of non-orientable surfaces without boundary apply almost verbatim to the case with boundary. This is in contrast with the orientable case, where the lower central series is a stronger tool in the case without boundary than in the case with boundary.

\subsubsection*{Acknowledgements}

The second and third authors of this paper were partitially supported by the CNRS/FAPESP PRC project n\textsuperscript{o}~275209 (France) and n\textsuperscript{o}~2016/50354-1 (Brazil). The second  author is also partially supported by the FAPESP Projeto Tem\'atico `Topologia Alg\'ebrica, Geom\'etrica e Diferencial' n\textsuperscript{o}~2016/24707-4 (Brazil).

\section{Orientable surfaces without boundary}\label{sec:closed}

In \resec{orlcs}, we start by recalling a presentation of the braid groups of compact, orientable surfaces without boundary, as well as some facts about their lower central series. In \resec{suror}, we generalise certain results of~\cite{GGzeit} about the minimal number of generators of these groups, and we prove Theorems~\ref{th:gensurj}\ref{it:main1a}\ref{it:main1aii} and \ref{th:LinSurfcomb} in the case where $\Sigma=\Sigma_{g}$, with $g\geq 1$. In \resec{sphere}, we prove \reth{gensurj}\ref{it:main1a}\ref{it:main1ai} and \reth{LinSurfcomb} in the case $g=0$, which is that of the sphere.

\subsection{Presentations and the lower central series of surface braid groups}\label{sec:orlcs}
 
In this paper, many of our techniques will be combinatorial and will make use of the lower central series of surface (pure) braid groups. Given a group $G$, recall that the \emph{lower central series} of $G$ is given by $\brak{\Gamma_i(G)}_{i\in \N}$, where $G=\Gamma_1(G)$, and $\Gamma_i(G)=[G,\Gamma_{i-1}(G)]$ for all $i\geq 2$. We thus have a filtration $\Gamma_1(G) \supseteq \Gamma_2(G) \supseteq \cdots$.
The group $G$ is said to be \emph{perfect} if $G=\Gamma_2(G)$. We shall denote the \emph{Abelianisation} $\Gamma_1(G)/\Gamma_2(G)$ of $G$ by $G_{\text{Ab}}$. If $\mathcal{P}$ is a group-theoretic property, let $\mathcal{FP}$ denote the class of groups that possess property $\mathcal{P}$. Following P.~Hall, $G$ is said to be \emph{residually $\mathcal{P}$} if for any (non-trivial) element $x\in G$, there exists a group $H$ possessing property $\mathcal{P}$ and a surjective homomorphism $\phi\colon\thinspace G \to H$ such that  $\phi(x) \not=1$. It is well known that a group $G$ is residually nilpotent if and only if $\bigcap_{i \ge 1}\Gamma_i(G)=\{ 1\}$. 
%
The lower central series of the Artin braid groups is well known.

\begin{prop}[see~\cite{BGeG,L}]\label{prop:braidlcs}
If $n\ge 3$,
$\Gamma_1(B_n)/\Gamma_2(B_n) \cong \Z$, and $\Gamma_2(B_n)=\Gamma_3(B_n)$.
\end{prop}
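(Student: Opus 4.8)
The plan is to work from the standard finite presentation of the Artin braid group
$B_n=\langle \sigma_1,\dots,\sigma_{n-1}\mid \sigma_i\sigma_{i+1}\sigma_i=\sigma_{i+1}\sigma_i\sigma_{i+1}\ (1\le i\le n-2),\ \sigma_i\sigma_j=\sigma_j\sigma_i\ (|i-j|\ge 2)\rangle$, and to exploit the fact that a presentation of $G$ immediately yields a presentation of $G_{\mathrm{Ab}}=\Gamma_1(G)/\Gamma_2(G)$ (same generators, the given relators, plus all commutators). First I would compute $\Gamma_1(B_n)/\Gamma_2(B_n)$: abelianising kills the commutator relations $\sigma_i\sigma_j=\sigma_j\sigma_i$, while each braid relation $\sigma_i\sigma_{i+1}\sigma_i=\sigma_{i+1}\sigma_i\sigma_{i+1}$ becomes $2\overline\sigma_i+\overline\sigma_{i+1}=\overline\sigma_i+2\overline\sigma_{i+1}$, i.e.\ $\overline\sigma_i=\overline\sigma_{i+1}$. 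Since the braid relations link consecutive indices and $n\ge 3$ guarantees at least one such relation (in fact the chain $1,2,\dots,n-1$ is connected), all the $\overline\sigma_i$ are identified to a single generator $t$ subject to no further relation, so $\Gamma_1(B_n)/\Gamma_2(B_n)\cong\Z$, generated by the class of $\sigma_1$.

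Next I would prove $\Gamma_2(B_n)=\Gamma_3(B_n)$, equivalently that $\Gamma_2(B_n)/\Gamma_3(B_n)$ is trivial. The cleanest route is to work inside the quotient $Q=B_n/\Gamma_3(B_n)$, a nilpotent group of class $\le 2$, and show $Q$ is abelian. In a class-$2$ group, commutators are central and bilinear in their entries modulo $\Gamma_3$, so $\Gamma_2(Q)/\Gamma_3(Q)=\Gamma_2(Q)$ is generated by the images of the commutators $[\sigma_i,\sigma_j]$ for $i<j$. The relations $\sigma_i\sigma_j=\sigma_j\sigma_i$ for $|i-j|\ge 2$ already force $[\sigma_i,\sigma_j]=1$ in $Q$ whenever $|i-j|\ge 2$, so it remains to kill the consecutive commutators $c_i:=[\sigma_i,\sigma_{i+1}]$. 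For this I would feed the braid relation $\sigma_i\sigma_{i+1}\sigma_i=\sigma_{i+1}\sigma_i\sigma_{i+1}$ into $Q$: rewriting it as $\sigma_i^{-1}\sigma_{i+1}^{-1}\sigma_i^{-1}\sigma_{i+1}\sigma_i\sigma_{i+1}=1$ and using bilinearity/centrality of commutators in a class-$2$ group, one expands the left side into a product of the $c_j$'s with integer exponents and gets a relation of the form $c_i^{?}=1$ — a short, routine commutator calculation — which, combined with the hypothesis $n\ge 3$ ensuring the braid relations are present, forces each $c_i=1$ in $Q$. Hence $Q$ is abelian, so $\Gamma_2(B_n)\subseteq\Gamma_3(B_n)$, and the reverse inclusion is automatic.

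The main obstacle, such as it is, is the bookkeeping in that last commutator expansion: one must carefully use the identities $[xy,z]\equiv[x,z][y,z]$ and $[x,yz]\equiv[x,y][x,z]$ valid modulo $\Gamma_3$, together with $[x,y]^{-1}\equiv[y,x]$, to reduce the braid relation to a statement purely about the $c_i$; a sign or exponent slip here is the only real risk. An alternative that avoids this is to quote that $B_n^{\mathrm{Ab}}\cong\Z$ is torsion-free together with the known structure of $\Gamma_2(B_n)/\Gamma_3(B_n)$ from the references \cite{BGeG,L}, but the self-contained argument above via the class-$2$ quotient is short and in the combinatorial spirit of the paper. Either way the result for $n\ge 3$ follows; the restriction $n\ge 3$ is exactly what is needed so that at least one braid relation is available to collapse the generators and the consecutive commutators.
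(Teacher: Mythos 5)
Your proposal is correct. Note, however, that the paper does not prove this proposition at all: it is quoted as a known result with a pointer to the references \cite{BGeG,L} (``The lower central series of the Artin braid groups is well known''), so your self-contained argument is by construction a different route --- a direct combinatorial verification from the Artin presentation rather than a citation. Both halves of your argument are sound: the abelianisation computation is the standard one, and working in the class-$2$ quotient $Q=B_n/\Gamma_3(B_n)$ does collapse everything. One small inaccuracy in your description of the second step: expanding the braid relator $\sigma_i^{-1}\sigma_{i+1}^{-1}\sigma_i^{-1}\sigma_{i+1}\sigma_i\sigma_{i+1}$ modulo $\Gamma_3$ does \emph{not} give a relation purely among the commutators $c_j$; the cleanest bookkeeping is to note that in a class-$2$ group, with $a=\sigma_i$, $b=\sigma_{i+1}$ and $d=[b,a]$ central, one has $aba=a^2bd$ and $bab=ab^2d$, so the braid relation forces $a^2b=ab^2$, hence $a=b$ in $Q$ (equivalently, the expansion reads $a^{-1}b\,[a,b]=1$, so $a^{-1}b$ is central and $[a,b]=1$). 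Either way each $c_i$ dies, $Q$ is abelian (indeed cyclic), and $\Gamma_2(B_n)\subseteq\Gamma_3(B_n)$ follows, the reverse inclusion being automatic. So the only risk you flagged --- the commutator bookkeeping --- is exactly where your sketch is slightly off in form, but not in substance, and the proof goes through.
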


\repr{braidlcs} also holds trivially if $n=2$ since $B_2\cong \Z$ (and therefore $\Gamma_2(B_n)= \Gamma_3(B_n)=1$). Using~\cite{FR,Ko} and the fact that $P_2$ is isomorphic to $\Z$, we see that the group $P_n$ is residually (torsion-free) nilpotent for all $n\ge 2$.
 
We recall a presentation of $B_n(\Sigma_g)$ for $g\geq 1$. 

\begin{thm}[{\cite[Theorem~6]{BGeG}}]\label{th:presbng}
Let $g,n\in\N$. Then $B_n(\Sigma_g)$ admits the following group presentation:
\begin{itemize}
\item \textbf{generators:} $a_1, b_1, \ldots, a_g, b_g, \sigma_1, \ldots, \sigma_{n-1}$.
\item \textbf{relations:}
\begin{gather}
\text{$\sigma_i\sigma_j=\sigma_j\sigma_i$ if $\lvert i-j \rvert \geq
2$}\label{eq:artin1}\\
\text{$\sigma_i\sigma_{i+1}\sigma_i= \sigma_{i+1}\sigma_i \sigma_{i+1}$
for all $1\leq i\leq n-2$}\label{eq:artin2}\\
\text{$c_i\sigma_j= \sigma_j c_i$ for all $j\geq 2$,   $c_i=a_i$ or
  $b_i$ and $i=1, \ldots, g$}\label{eq:cs}\\
\text{$c_i \sigma_1 c_i \sigma_1= \sigma_1 c_i \sigma_1 c_i$   for
  $c_i=a_i$ or $b_i$ and $i=1, \ldots, g$}\label{eq:cici}\\
\text{$a_i \sigma_1 b_i = \sigma_1 b_i \sigma_1 a_i \sigma_1$ for  $i=1, \ldots, g$}\label{eq:ab}\\
\text{$c_i \sigma_1^{-1}  c_j \sigma_1=\sigma_1^{-1} c_j \sigma_1 c_i$
  for $c_i=a_i$ or $b_i$, $c_j=a_j$ or $b_j$ and $1\le j<i\le g$}\label{eq:cicj}\\
\text{$\prod_{i=1}^g [a_i^{-1},b_i]= \sigma_1\cdots \sigma_{n-2} \sigma_{n-1}^2 \sigma_{n-2} \cdots \sigma_1$}\label{eq:tot}.
\end{gather}
\end{itemize}
\end{thm}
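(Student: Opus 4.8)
The plan is to argue by induction on $n$, building $B_n(\Sigma_g)$ one string at a time from the Fadell--Neuwirth fibrations of the configuration spaces of $\Sigma_g$; throughout we use that $g\ge1$, so that the spaces $\FF_k(\Sigma_g)$ are aspherical and these fibrations induce short exact sequences of fundamental groups. For the base case $n=1$ we have $B_1(\Sigma_g)=\pi_1(\Sigma_g)$, and the presentation in the statement reduces --- all of \eqref{eq:artin1}--\eqref{eq:cicj} being vacuous and the right-hand side of \eqref{eq:tot} being the empty word --- to $\setang{a_1,b_1,\ldots,a_g,b_g}{\prod_{i=1}^g[a_i^{-1},b_i]=1}$, one of the standard one-relator presentations of the genus-$g$ surface group.

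For the inductive step, I would first record a presentation of the pure braid group $P_n(\Sigma_g)$. Forgetting the last point gives a locally trivial fibration $\FF_n(\Sigma_g)\to\FF_{n-1}(\Sigma_g)$ with fibre $\Sigma_g$ punctured $n-1$ times, hence a short exact sequence $1\to F\to P_n(\Sigma_g)\to P_{n-1}(\Sigma_g)\to1$ in which $F$ is free of rank $2g+n-2$. Feeding the inductively known presentation of $P_{n-1}(\Sigma_g)$ and the free group $F$ into the standard procedure for presenting a group extension yields a presentation of $P_n(\Sigma_g)$ on the classical geometric generators: the pure braid generators $A_{j,k}$ ($1\le j<k\le n$) together with, for each string $k$, copies $a_i^{(k)},b_i^{(k)}$ of the $2g$ loops of the surface, subject to the usual pure surface braid relations.

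Next I would pass from $P_n(\Sigma_g)$ to $B_n(\Sigma_g)$ by the same procedure applied to the extension~\eqref{eq:permutation}. Taking as lifts of the Coxeter generators $s_1,\ldots,s_{n-1}$ of $S_n$ the Artin braids $\sigma_1,\ldots,\sigma_{n-1}$ supported in an embedded $2$-disc, one checks the three ingredients the procedure requires: the braid relations \eqref{eq:artin1}--\eqref{eq:artin2} lift to identities in $B_n(\Sigma_g)$ with no correction term in $P_n(\Sigma_g)$ (they already hold in the image of $B_n\to B_n(\Sigma_g)$ induced by the disc); the relator $s_i^2$ lifts to the relation $\sigma_i^2=A_{i,i+1}$; and conjugation by $\sigma_i$ acts on the generators $A_{j,k}$, $a_i^{(k)}$, $b_i^{(k)}$ in the standard geometric way --- trivially on the generators attached to strings other than $i,i+1$, and by the obvious swap otherwise. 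This produces a large, highly redundant presentation of $B_n(\Sigma_g)$.

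The remaining, and hardest, step is to reduce this presentation to the one in the statement by Tietze transformations. Using the relations already present, every $A_{j,k}$ may be rewritten as a word in $\sigma_1,\ldots,\sigma_{n-1}$, and every loop $a_i^{(k)},b_i^{(k)}$ of string $k$ as a conjugate --- by a suitable product of the $\sigma_j$ --- of the corresponding loop $a_i:=a_i^{(1)}$, $b_i:=b_i^{(1)}$ of the first string; eliminating every generator other than $a_i,b_i,\sigma_j$ leaves exactly the generating set of the statement. One must then verify that, modulo \eqref{eq:artin1}--\eqref{eq:artin2}, the relations \eqref{eq:cs}--\eqref{eq:tot} are equivalent to the full set of surviving relations. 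Geometrically, \eqref{eq:cs} records that string $1$'s loops are disjoint from $\sigma_j$ for $j\ge2$; \eqref{eq:cici} and \eqref{eq:cicj} are the ``braided'' commutations between the loop of string~$1$ labelled $c_i$ and, respectively, the $c_i$- and $c_j$-loops of string~$2$; \eqref{eq:ab} is the braided form of the handle commutator relation; and \eqref{eq:tot} is the surface relation $\prod_i[a_i^{-1},b_i]=1$ carried by string~$1$ once around the other $n-1$ strings, which is exactly why the pure braid $\sigma_1\cdots\sigma_{n-2}\sigma_{n-1}^2\sigma_{n-2}\cdots\sigma_1$ appears on its right-hand side. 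I expect this bookkeeping --- in particular showing that \eqref{eq:cs}--\eqref{eq:tot} suffice, i.e.\ that no surviving relation has been lost --- to be the main obstacle; in practice it amounts to a careful comparison with the more redundant presentations produced directly by the fibration method.
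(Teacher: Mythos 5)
There is no proof of this statement in the paper to compare against: \reth{presbng} is quoted verbatim from \cite[Theorem~6]{BGeG}, which in turn rests on Bellingeri's presentation paper \cite{B}, so the only meaningful comparison is with that literature. Your outline — present $P_n(\Sigma_g)$ inductively via the Fadell--Neuwirth fibrations, then apply the standard presentation-of-an-extension machinery to \reqref{permutation} with the disc braids $\sigma_1,\ldots,\sigma_{n-1}$ as lifts of the Coxeter generators, and finally reduce by Tietze transformations — is the classical route by which such presentations are established, and nothing in it is wrongly conceived (note also that the induction on $n$ is really only carried by the pure braid tower; the passage to $B_n(\Sigma_g)$ is a single application of the extension method, so the argument should be organised that way).

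The genuine gap is that what you have written is a plan, not a proof: the entire content of the theorem sits in the step you defer as ``bookkeeping''. Concretely, you must (i) start from a correct, fully stated presentation of $P_n(\Sigma_g)$ on the $A_{j,k}$ and the $a_i^{(k)},b_i^{(k)}$ — itself a nontrivial theorem whose relations are delicate to state, so it needs a precise citation or its own inductive proof; (ii) compute the conjugation action of each $\sigma_i$ on these generators explicitly — it is not ``the obvious swap'': the swap is clean in one direction only, and in the other direction produces correction terms by $\sigma_i^{2}=A_{i,i+1}$ (this asymmetry is exactly why \reqref{cici} reads $c_i(\sigma_1 c_i\sigma_1)=(\sigma_1 c_i\sigma_1)c_i$ rather than a plain commutation); and (iii) after eliminating the redundant generators, prove that every surviving relation is a consequence of \reqref{artin1}--\reqref{tot}. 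Without (iii) you have only verified that the listed relations hold in $B_n(\Sigma_g)$, i.e.\ that the presented group surjects onto $B_n(\Sigma_g)$; the completeness of the relation set is the whole theorem, and as submitted it is promised rather than performed.
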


Throughout this paper, relations~(\ref{eq:artin1}) and~(\ref{eq:artin2}) will be referred to as the \emph{braid} or \emph{Artin relations}. Observe that if we take $g=0$ in the presentation of \reth{presbng}, we obtain the presentation of $B_{n}(\SP)$ due to Fadell and Van Buskirk~\cite{FV}, the relations being the braid relations and the `surface relation':
\begin{equation}\label{eq:totsph}
\sigma_1\cdots \sigma_{n-2} \sigma_{n-1}^2 \sigma_{n-2} \cdots \sigma_1=1,
\end{equation}
so \reth{presbng} is also valid in this case.

If $g\geq 1$, the lower central series of the braid groups of $\Sigma_{g}$ were studied in~\cite{BGeG}. The 
statement of the following theorem contains some of the results of that paper, and provides some minor improvements, notably in the case $n=2$.

\begin{thm}\label{th:gam3closed} 
Let $g,n\geq 1$. Then:  
\begin{enumerate}
\item\label{it:gam12g} $\Gamma_1(B_n(\Sigma_g))/\Gamma_2(B_n(\Sigma_g)) \cong 
\begin{cases}
\Z^{2g} & \text{if $n=1$}\\
\Z^{2g} \oplus \Z_2 & \text{if $n\geq 2$.}
\end{cases}$

\item\label{it:gam23g}\begin{enumerate}[wide=0em,labelsep=0.2em]
\item\label{it:gam23gn} $\Gamma_2(B_n(\Sigma_g))/\Gamma_3(B_n(\Sigma_g)) \cong 
\begin{cases}
 \Z^{g(2g-1)-1} & \text{if $n=1$}\\
\Z_{n-1+g} & \text{if $n\geq 3$.}
\end{cases}$

\item\label{it:gam23gn2} If $n=2$, $\Gamma_2(B_2(\T))/\Gamma_3(B_2(\T))\cong \Z_{2}^{3}$, and if $g>1$, $\Gamma_2(B_2(\Sigma_g))/\Gamma_3(B_2(\Sigma_g))$ is a non-trivial quotient of $\Z_2^{2g} \oplus \Z_{g+1}$.
\end{enumerate}

\item\label{it:gam3g} $\Gamma_3(B_n(\Sigma_g))=\Gamma_4(B_n(\Sigma_g))$ if and only if $n\geq 3$. Moreover $\Gamma_3(B_n(\Sigma_{g}))$ is perfect if and only if $n\ge 5$.
\item\label{it:gam4g} The group $B_n(\Sigma_g)$ is residually nilpotent if and only if $n\leq 2$.
\end{enumerate}
\end{thm}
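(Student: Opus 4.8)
The plan is to compute each section of the lower central series quotient directly from the presentation in \reth{presbng}, working one graded piece at a time. For part~\reqref{it:gam12g}, I would abelianise the presentation: the generators $a_i,b_i$ contribute a free $\Z^{2g}$ summand (since the commutator relation~\reqref{eq:tot} becomes trivial in the abelianisation on the left, and forces $\sigma_1^2\cdots$ to be trivial in homology), while the $\sigma_i$ are all identified to a single class $\sigma$ by the braid relations~\reqref{eq:artin2}, and relation~\reqref{eq:tot} abelianised yields $2\sigma=0$ when $n\ge 2$ (giving the $\Z_2$ summand) and no relation at all when $n=1$ (there are no $\sigma$'s). This is essentially the computation already recorded in~\cite{BGeG}, so the only new point is the $n=1$ statement, which is immediate since $B_1(\Sigma_g)=\pi_1(\Sigma_g)$ has abelianisation $\Z^{2g}$.

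For parts~\reqref{it:gam23g} and~\reqref{it:gam3g}, the tool is the standard one: $\Gamma_2(G)/\Gamma_3(G)$ is generated by the classes of commutators of generators, subject to the Jacobi/bilinearity identities and to the relations obtained by rewriting the defining relators of $G$ modulo $\Gamma_3(G)$. Concretely, I would set $G=B_n(\Sigma_g)$, let $H=G/\Gamma_3(G)$, and analyse $H$ as a central extension of $G_{\mathrm{Ab}}$ by $\Gamma_2(G)/\Gamma_3(G)$. For $n\ge 3$, the braid relations force all $[\sigma_i,\sigma_{i+1}]$ and the mixed commutators $[\sigma_i,a_j],[\sigma_i,b_j]$ (for $i\ge 2$, via~\reqref{eq:cs}) to vanish or to be rewritten; relations~\reqref{eq:cici}, \reqref{eq:ab}, \reqref{eq:cicj} express the commutators $[a_i,b_i]$, $[a_i,\sigma_1]$, etc., and relation~\reqref{eq:tot} gives the crucial relation of the form $(n-1+g)\sigma \equiv 0$ in $\Gamma_2/\Gamma_3$ (the $n-1$ coming from $\sigma_1\cdots\sigma_{n-1}^2\cdots\sigma_1$ and the $g$ from the genus commutators). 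Collecting these shows $\Gamma_2(B_n(\Sigma_g))/\Gamma_3(B_n(\Sigma_g))\cong\Z_{n-1+g}$ for $n\ge 3$. For $n=1$ it is the computation of $\Gamma_2/\Gamma_3$ of a surface group, which is a free abelian group of the stated rank $g(2g-1)-1$. The equality $\Gamma_3=\Gamma_4$ for $n\ge 3$ then follows because once $\Gamma_2/\Gamma_3$ is finite cyclic generated by (a power of) the image of $\sigma$, every commutator $[g,\gamma]$ with $\gamma\in\Gamma_2$ already lies in $\Gamma_3$, i.e.\ the graded Lie ring stabilises; conversely for $n\le 2$ one exhibits an infinite nilpotent quotient, using residual nilpotence (part~\reqref{it:gam4g}) to see $\Gamma_3\supsetneq\Gamma_4$. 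The perfectness claim $\Gamma_3$ perfect $\iff n\ge 5$ I would get by identifying $\Gamma_3$ with a pure-braid-type subgroup and invoking the known structure results on $P_n(\Sigma_g)$; here the threshold $n\ge 5$ should come from the point at which the relevant commutator subgroup of $P_n(\Sigma_g)$ becomes perfect.

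For part~\reqref{it:gam4g}, the ``only if'' direction is the easy one: by~\reqref{it:gam3g}, if $n\ge 3$ then $\Gamma_3=\Gamma_4$, and this common subgroup is non-trivial (it surjects onto, say, a non-trivial quotient coming from the genus), so $\bigcap_i\Gamma_i\neq\{1\}$ and $B_n(\Sigma_g)$ is not residually nilpotent. The ``if'' direction, $n\le 2$, requires an actual proof of residual nilpotence: for $n=1$, $B_1(\Sigma_g)=\pi_1(\Sigma_g)$ is residually (torsion-free) nilpotent, a classical fact for surface groups of genus $\ge 1$; for $n=2$, one uses the Fadell--Neuwirth-type short exact sequence $1\to\pi_1(\Sigma_g\setminus\{*\})\to P_2(\Sigma_g)\to\pi_1(\Sigma_g)\to 1$ together with the fact that $P_2(\Sigma_g)$ sits inside $B_2(\Sigma_g)$ with index $2$, and then a splitting/filtration argument (the free group of the fibre is residually nilpotent, the base is residually nilpotent, and the extension is ``compatible'' with the lower central filtration) to deduce residual nilpotence of $B_2(\Sigma_g)$.

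The main obstacle I anticipate is twofold. First, the precise bookkeeping in part~\reqref{it:gam23gn2} for $n=2$: here the braid relations are nearly vacuous (there is only $\sigma_1$), so relation~\reqref{eq:cici} no longer kills $[c_i,\sigma_1]$ but only imposes $2[c_i,\sigma_1]\equiv 0$, relation~\reqref{eq:ab} behaves differently, and one ends up with a quotient of $\Z_2^{2g}\oplus\Z_{g+1}$ whose exact isomorphism type is delicate — which is precisely why the statement only claims it is ``a non-trivial quotient'' in general, with the torus case $\Z_2^3$ singled out. Second, establishing residual nilpotence of $B_2(\Sigma_g)$ rigorously: extensions of residually nilpotent groups need not be residually nilpotent, so one must use a genuine argument (e.g.\ controlling the lower central series of $P_2(\Sigma_g)$ via the fibration and then passing to the index-$2$ overgroup, possibly using that the fibre inclusion is compatible with the $\Z$-lower-central filtrations, cf.\ the Falk--Randell philosophy for almost-direct products). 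I would treat this as the technical heart of the proof and handle the cyclic-quotient computations for $n\ge 3$ as the (longer but routine) combinatorial part.
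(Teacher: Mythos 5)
Your plan for parts \emph{(a)} and for the $n\ge 3$ computation of $\Gamma_2/\Gamma_3$ is reasonable (it amounts to re-deriving what the paper simply cites from~\cite{BGeG}), but several of the steps you sketch for the genuinely new content of the theorem either fail or are left unproved. First, your deduction of $\Gamma_3=\Gamma_4$ for $n\ge 3$ from the finiteness of $\Gamma_2/\Gamma_3$ is a non-sequitur: the observation that $[g,\gamma]\in\Gamma_3$ for $\gamma\in\Gamma_2$ is just the definition of $\Gamma_3$ and says nothing about $\Gamma_3/\Gamma_4$. Indeed $\Z_2\ast\Z_2$ has $\Gamma_2/\Gamma_3\cong\Z_2$ finite cyclic while $\Gamma_i/\Gamma_{i+1}\cong\Z_2$ for every $i\ge 2$, and the closely related group $B_2(\T)$ (whose lower central series agrees with that of $\Z_2\ast\Z_2\ast\Z_2$ from the second term on) has \emph{all} quotients non-trivial. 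The actual proof for $n\ge 3$ uses the braid relations in an essential way and is the content of~\cite[Theorem~1]{BGeG}. Second, your route to the perfectness statement rests on a misconception: $\Gamma_3(B_n(\Sigma_g))$ is \emph{not} a ``pure-braid-type subgroup'' --- it surjects onto $\Gamma_3(S_n)=A_n$ under the permutation map, so it is not contained in $P_n(\Sigma_g)$, and no structure theory of $P_n(\Sigma_g)$ will detect the threshold. The paper's argument for the ``only if'' part when $n\in\{3,4\}$ is exactly this surjection: $A_n^{\mathrm{ab}}\cong\Z_3$, so $\Gamma_3$ cannot be perfect (\repr{perfect}); for $n\le 2$ one needs Labute's computation ($n=1$) or residual nilpotence plus non-nilpotence ($n=2$), neither of which you address; the sufficiency for $n\ge 5$ is again quoted from~\cite{BGeG}.

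Third, for $n=2$ you flag the difficulties but do not resolve them, and they are part of the statement: the isomorphism $\Gamma_2(B_2(\T))/\Gamma_3(B_2(\T))\cong\Z_2^3$ is obtained in the paper from the identification of the lower central series of $B_2(\T)$ with that of $\Z_2\ast\Z_2\ast\Z_2$ together with Gaglione's formula (\reth{renil}), and the non-triviality of the quotient for $g>1$ is deduced from the fact that $B_2(\Sigma_g)$ has trivial centre, hence is not nilpotent, combined with residual nilpotence (\rerem{notnilp}); your sketch proves neither. Finally, for residual nilpotence of $B_2(\Sigma_g)$ you correctly note that an extension of residually nilpotent groups need not be residually nilpotent, but the fix is not a Falk--Randell almost-direct-product argument (the Fadell--Neuwirth sequence for a closed surface is not an almost-direct product): the key input is residual $2$-finiteness of $P_2(\Sigma_g)$ (Bardakov--Bellingeri), after which Gruenberg's lemma handles the extension by the $2$-group $S_2$; the paper simply invokes~\cite[Corollary~10]{BB} for $B_2(\Sigma_g)$ itself (\repr{n=2orientable}). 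As it stands, the proposal establishes the routine abelianisation and the $n\ge 3$ presentation computation, but leaves unproved, or argues incorrectly, precisely the points where the theorem goes beyond~\cite{BGeG}.
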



Parts~\ref{it:gam12g} and~\ref{it:gam23g} of \reth{gam3closed} imply that the braid groups of orientable surfaces without boundary may be distinguished by their lower central series (and indeed by the first two lower central series quotients). A presentation of the group $B_n(\Sigma_{g})/\Gamma_3(B_n(\Sigma_{g}))$ was given in~\cite[eq.~(10)]{BGeG} and may be found in \rex{exo1}. Many of the statements of this theorem were proved in~\cite[Theorem~1]{BGeG} in the case $n\geq 3$, and may be deduced from~\cite{LA} in the case $n=1$. More information about the lower central series quotients of $B_1(\Sigma_{g})$ may be found in~\cite{LA}. Taking into account these papers, at the end of this section, we prove \reth{gam3closed}. We first give some preliminary results and properties regarding the remaining parts of the statement, notably in the case where $n=2$. If $g=1$, $\T$ is the $2$-torus $\T$, and we have the following result for $B_{2}(\T)$.




\begin{thm}[{\cite[Theorem~3]{BGeG}}]\label{th:renil}
The group $B_2(\T)$ is residually nilpotent, but is not residually torsion-free nilpotent.  Further, $\Gamma_2(B_2(\T))/\Gamma_{3}(B_2(\T))\cong\Z^3_2$, and $\Gamma_3(B_2(\T))/\Gamma_{4}(B_2(\T))\cong\Z_{2}^{5}$.
\end{thm}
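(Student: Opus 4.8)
The plan is to work from the presentation of $B_2(\T)$ obtained from \reth{presbng} by setting $g=1$ and $n=2$: it has generators $a=a_1$, $b=b_1$, $\sigma=\sigma_1$ and, once the relations that become vacuous in this range are discarded, the four relations $a\sigma a\sigma=\sigma a\sigma a$, $b\sigma b\sigma=\sigma b\sigma b$, $a\sigma b=\sigma b\sigma a\sigma$ and $[a^{-1},b]=\sigma^{2}$. Abelianising yields $B_2(\T)_{\text{Ab}}\cong\Z^{2}\oplus\Z_{2}$, the $\Z_{2}$-summand being generated by the class of $\sigma$; in particular, writing $\Gamma_i=\Gamma_i(B_2(\T))$ throughout, $\sigma\notin\Gamma_2$ and so $\sigma\neq 1$.

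For residual nilpotence I would not argue inside this presentation, but use that $\T$ is a Lie group, so that $(x,y)\mapsto(x,y-x)$ is a homeomorphism $\FF_2(\T)\to\T\times(\T\setminus\{0\})$; as $\T\setminus\{0\}$ is homotopy equivalent to a wedge of two circles, this gives $P_2(\T)\cong\Z^{2}\times F_{2}$ with $F_{2}$ free of rank $2$. Such a group is residually a finite $2$-group, since $\Z^{2}$ and $F_{2}$ are and this property passes to finite direct products. By \req{permutation}, $P_2(\T)$ is normal of index $2$ in $B_2(\T)$; and whenever a group has a normal subgroup that is residually a finite $2$-group and of $2$-power index, it is itself residually a finite $2$-group --- given a quotient $P_2(\T)/K$ that is a finite $2$-group and through which a prescribed nontrivial element of $P_2(\T)$ survives, replace $K$ by its core $\bigcap_{g\in B_2(\T)}gKg^{-1}$, a finite intersection and hence a normal subgroup of $B_2(\T)$ of $2$-power index. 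Thus $B_2(\T)$ is residually a finite $2$-group, and in particular residually nilpotent.

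For the lower central quotients I would compute modulo $\Gamma_3$, where all weight-two commutators are central: expanding $a\sigma a\sigma=\sigma a\sigma a$ and $b\sigma b\sigma=\sigma b\sigma b$ shows $[a,\sigma]^{2},[b,\sigma]^{2}\in\Gamma_3$, after which $a\sigma b=\sigma b\sigma a\sigma$ reduces to $[a,b]\equiv\sigma^{2}$ while $[a^{-1},b]=\sigma^{2}$ gives $[a,b]\equiv\sigma^{-2}$, so that $\sigma^{4}$ and $[a,b]^{2}$ lie in $\Gamma_3$ too. Hence $\Gamma_2/\Gamma_3$ is generated by the three classes $[a,b]$, $[a,\sigma]$, $[b,\sigma]$, each of order dividing $2$, so it is a quotient of $\Z_2^{3}$; the reverse inequality is certified by an explicit finite $2$-group quotient --- for instance the class-$2$ nilpotent group $B_2(\T)/\Gamma_3$, whose presentation follows from \reth{presbng} by Tietze transformations (cf.\ \rex{exo1}) --- giving $\Gamma_2/\Gamma_3\cong\Z_2^{3}$. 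The identification $\Gamma_3/\Gamma_4\cong\Z_2^{5}$ is of the same nature but considerably longer: one works modulo $\Gamma_4$, and uses the presentation of $B_2(\T)/\Gamma_3$ together with the Hall--Witt identities to express the weight-three contributions of the four relations in terms of the eight basic commutators of weight three in $a$, $b$, $\sigma$, the lower bound again coming from an explicit finite $2$-group quotient. I expect this last computation to be the main obstacle.

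Finally, for the failure of residual torsion-free nilpotence, note that for each $i\geq2$ the commutator map induces a surjection $(\Gamma_i/\Gamma_{i+1})\otimes_{\Z}(\Gamma_1/\Gamma_2)\twoheadrightarrow\Gamma_{i+1}/\Gamma_{i+2}$; since $\Gamma_2/\Gamma_3$ is a finite $2$-group (by the previous paragraph) and the tensor product of a finite $2$-group with the finitely generated abelian group $\Gamma_1/\Gamma_2\cong\Z^{2}\oplus\Z_{2}$ is again a finite $2$-group, induction shows that $\Gamma_i/\Gamma_{i+1}$ is a finite $2$-group for every $i\geq2$. Now let $q\colon B_2(\T)\to N$ be a homomorphism onto a torsion-free nilpotent group (replacing $N$ by its image, again torsion-free nilpotent, if necessary). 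If $N$ had nilpotency class $c\geq2$, then $\Gamma_c(N)=q(\Gamma_c)$ would be a nontrivial quotient of $\Gamma_c/\Gamma_{c+1}$ (as $\Gamma_{c+1}$ lies in the kernel of $q$), hence a nontrivial finite $2$-group; but it is also a subgroup of the torsion-free group $N$, which is absurd. So $N$ is abelian, hence torsion-free abelian, and $q$ factors through the maximal torsion-free quotient $\Z^{2}$ of $B_2(\T)_{\text{Ab}}$; therefore $q(\sigma)=1$, the class of $\sigma$ being a torsion element of $B_2(\T)_{\text{Ab}}$. Since $\sigma\neq 1$, no torsion-free nilpotent quotient of $B_2(\T)$ separates $\sigma$ from the identity, so $B_2(\T)$ is not residually torsion-free nilpotent.
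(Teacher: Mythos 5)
Your treatment of the two qualitative assertions is correct, and it is genuinely different from the paper's, which simply quotes~\cite[Theorem~3]{BGeG}: you get residual nilpotence from the splitting $P_2(\T)\cong\Z^2\times F_2$ (via the group structure of the torus) together with the core argument, which is in effect Gruenberg's lemma~\cite{Gr} as used elsewhere in the paper; and you get the failure of residual torsion-free nilpotence from the observation that $\Gamma_i(B_2(\T))/\Gamma_{i+1}(B_2(\T))$ is a finite $2$-group for all $i\geq 2$, so that any torsion-free nilpotent quotient is abelian and must kill $\sigma_1$, whose class in the abelianisation $\Z^2\oplus\Z_2$ is a nontrivial torsion element. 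Note that this second argument only needs the upper bound ``$\Gamma_2/\Gamma_3$ is a quotient of $\Z_2^3$'', which you do establish correctly (it is \repr{pre} for $g=1$).

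The gap lies in the two isomorphisms $\Gamma_2(B_2(\T))/\Gamma_3(B_2(\T))\cong\Z_2^3$ and $\Gamma_3(B_2(\T))/\Gamma_4(B_2(\T))\cong\Z_2^5$, which are the quantitative content of the theorem. For the first you prove only that the group is a quotient of $\Z_2^3$; the lower bound is delegated to ``an explicit finite $2$-group quotient --- for instance $B_2(\T)/\Gamma_3$'', but that group is infinite (its abelianisation is $\Z^2\oplus\Z_2$), no quotient is actually exhibited, and the appeal to \rex{exo1} does not help: the presentation there is valid only for $n\geq 3$ (it uses relation~(\ref{eq:cs}), absent when $n=2$), and its naive $n=2$, $g=1$ analogue is precisely the group $G$ of the paper's closing remarks, for which $\Gamma_2(G)/\Gamma_3(G)\cong\Z_2$ --- strictly smaller than $\Z_2^3$. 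So ruling out further collapse requires a real argument (controlling the image of $\Gamma_2(F)\cap R\Gamma_3(F)$ in $\Gamma_2(F)/\Gamma_3(F)$, where conjugates of the two relators with nonzero $\sigma$-exponent contribute, or exhibiting a concrete quotient in which $[a,b]$, $[a,\sigma]$, $[b,\sigma]$ remain independent), and none is given. For $\Gamma_3/\Gamma_4\cong\Z_2^5$ you offer only a plan and yourself flag it as the main obstacle, so this part is simply not proved. The paper sidesteps all of this: by~\cite[Theorem~3(b)]{BGeG} the lower central series of $B_2(\T)$ coincides, from the second term onwards, with that of $\Z_2\ast\Z_2\ast\Z_2$, and Gaglione's formula~\cite{Ga} for the lower central quotients of free products gives the ranks $R_2=3$ and $R_3=5$ directly. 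Without such an input, or a completed direct computation with verified lower bounds, the two displayed isomorphisms remain unestablished.
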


\begin{proof}
The first part of the statement is~\cite[Theorem~3(a) and~(c)]{BGeG}. To prove the second part, using ideas from~\cite{GG1}, it was shown in~\cite[Theorem~3(b)]{BGeG} that with the exception of the first term, the lower central series of $B_2(\T)$ and the free product $\Z_2 \ast \Z_2 \ast \Z_2$ coincide. With the help of results of~\cite{Ga}, for all $i\geq 2$, it follows that lower central series quotient $\Gamma_i(B_2(\T))/\Gamma_{i+1}(B_2(\T))$ is isomorphic to the direct sum of $R_i$ copies of $\Z_2$, where $R_{i}$ is given by an explicit formula involving the M\"obius function, from which one may check that $R_{2}=3$ and $R_{3}=5$. This yields the second part of the statement. 
\end{proof}

If $g>1$, $B_{2}(\Sigma_{g})$ is residually nilpotent.

\begin{prop}[{\cite[Corollary~10]{BB}}] \label{prop:n=2orientable}
If $g\ge 1$, the group $B_2(\Sigma_g)$ is residually $2$-finite. In particular, it is residually nilpotent. 
\end{prop}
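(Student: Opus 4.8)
\emph{The plan} is to deduce the statement from the Fadell--Neuwirth short exact sequence, using the residual $2$-finiteness of free groups and of closed orientable surface groups of positive genus; the only genuine difficulty is that residual $2$-finiteness is \emph{not} inherited by arbitrary group extensions, so the monodromy of that sequence must be kept under control.

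First I would reduce to the pure braid group. Since $S_2\cong\Z_2$ is a finite $2$-group, \req{permutation} with $n=2$ shows that it suffices to prove that $P_2(\Sigma_g)$ is residually $2$-finite: indeed, if a group $G$ has a normal subgroup $H$ of $2$-power index that is residually $2$-finite, then $G$ is residually $2$-finite, because for $1\neq x\in G$ either the image of $x$ in the finite $2$-group $G/H$ is non-trivial, or $x\in H$, in which case, choosing $M\triangleleft H$ with $x\notin M$ and $H/M$ a finite $2$-group, the normal core $M_0=\bigcap_{w\in G}wMw^{-1}$ is normal in $G$, satisfies $x\notin M_0$, and is such that $H/M_0$ embeds into a finite product of copies of $H/M$, so that $G/M_0$ is a finite $2$-group.

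Since $g\geq1$, the surfaces $\Sigma_g$ and $\Sigma_g\setminus\{x_0\}$ are aspherical, so the Fadell--Neuwirth fibration yields a short exact sequence $1\to F\to P_2(\Sigma_g)\to\pi_1(\Sigma_g)\to1$ in which $F=\pi_1(\Sigma_g\setminus\{x_0\})$ is free of rank $2g$. The group $F$ is residually $2$-finite, and so is $\pi_1(\Sigma_g)$ (a closed orientable surface group of positive genus is residually $p$-finite for every prime $p$, being in fact residually torsion-free nilpotent, cf.\ \cite{LA}). The key point is the monodromy action of $\pi_1(\Sigma_g)$ on $F$: by the Birman exact sequence it is realised, modulo inner automorphisms, by point-pushing homeomorphisms, and the point-push around a simple closed curve $c$ equals $T_{c_1}T_{c_2}^{-1}$, where $c_1$ and $c_2$ are the two boundary curves of an annulus containing $x_0$; since $c_1$ and $c_2$ then differ in $H_1(\Sigma_g\setminus\{x_0\};\mathbb{Z})$ only by the class of a small loop around $x_0$, which is null-homologous there, this action is trivial on $H_1(\Sigma_g\setminus\{x_0\};\mathbb{Z})=H_1(F;\mathbb{Z})$, hence on $H_1(F;\mathbb{Z}/2)$. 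As $F$ is free, the associated graded of its mod-$2$ lower central series $F=F_1\supseteq F_2\supseteq\cdots$, with $F_{i+1}=[F,F_i]F_i^{2}$, is the free restricted Lie algebra over $\mathbb{Z}/2$ on $H_1(F;\mathbb{Z}/2)$, and a graded automorphism of it that is trivial in degree $1$ is the identity; therefore $\pi_1(\Sigma_g)$ acts trivially on every quotient $F_i/F_{i+1}$. In particular each $F_i$ is characteristic in $F$, hence normal in $P_2(\Sigma_g)$, each $F/F_i$ is a finite $2$-group, and $\pi_1(\Sigma_g)$ stabilises the normal series $F/F_i\supseteq F_2/F_i\supseteq\cdots\supseteq1$ with elementary abelian factors, so it acts on $F/F_i$ through a finite $2$-group.

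It then remains to apply the general principle that an extension $1\to N\to G\to Q\to1$ with $N$ finitely generated free, $Q$ residually $2$-finite, and trivial $G$-action on $H_1(N;\mathbb{Z}/2)$, is residually $2$-finite; for the sequence above this gives that $P_2(\Sigma_g)$, and hence $B_2(\Sigma_g)$, is residually $2$-finite, and residual nilpotence follows at once since finite $2$-groups are nilpotent (this also recovers, for all $g\geq1$, the residual nilpotence of $B_2(\T)$ established in \reth{renil} for $g=1$). \emph{The main obstacle} is to prove this last principle rigorously: given $1\neq x\in P_2(\Sigma_g)$ with $x\in F$, one fixes $i$ with $x\notin F_i$ and must exhibit a finite $2$-quotient of $P_2(\Sigma_g)/F_i$ not killing the image of $x$; the delicate case is that of a central element of the finite $2$-group $F/F_i$, which one disposes of by an induction on the lower central length of $F/F_i$, exploiting that $\pi_1(\Sigma_g)$ acts unipotently and is itself residually $2$-finite (alternatively, one quotes a known residual-$p$ criterion for fibred groups of this type).
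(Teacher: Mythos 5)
You should first note that the paper does not prove this proposition at all: it is quoted from \cite[Corollary~10]{BB}, and the analogous statements that the paper does prove in-text (\repr{n=2orientablebound}\ref{it:resboundary1} and \repr{b2nor}) are obtained exactly by your opening reduction, namely \req{permutation} with $n=2$, the residual $2$-finiteness of the pure braid group quoted from the literature, and Gruenberg's lemma \cite{Gr}. That part of your proposal is correct, and so are your next steps: the Fadell--Neuwirth sequence $1\to F\to P_2(\Sigma_g)\to\pi_1(\Sigma_g)\to 1$ with $F$ free of rank $2g$, the point-pushing computation showing that the action on $H_1(F;\Z_2)$ is trivial, and the resulting unipotence of the action on all quotients $F_i/F_{i+1}$ of the mod-$2$ lower central series of $F$.

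The genuine gap is the ``general principle'' you invoke at the end, which is where the entire difficulty of the proposition lives and which you explicitly leave unproved. Your sketched induction does not close the case you yourself call delicate: after reducing to $x\in F_{i-1}\setminus F_i$, the subgroup $F_{i-1}/F_i$ is centralised by all of $G/F_i$ (unipotence gives exactly this), so detecting $x$ amounts to finding a finite $2$-quotient of a \emph{central} extension of a residually $2$-finite group by a finite elementary abelian $2$-group that does not kill $x$; this is not a formal consequence of unipotence plus residual $2$-finiteness of the base --- central extensions of residually finite (and residually $p$) groups by finite central subgroups need not inherit the property (Deligne-type examples), and what is needed is that the relevant classes in $H^2$ of the base are inflated from finite $2$-quotients, an extra input about surface groups that you never establish. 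The interlacing statement that would make such an induction work, namely $\gamma_k^{(2)}(G)\cap F=F_k$ in the spirit of Falk--Randell \cite{FR}, is a theorem for \emph{split} extensions with trivial action on $H_1$ and genuinely fails for non-split ones (already for the integral Heisenberg group). The standard repair, and the route actually taken in the cited source, is to use the fact that the Fadell--Neuwirth sequence for $P_2(\Sigma_g)$ splits for $g\ge 1$ (a section of $\FF_2(\Sigma_g)\to\Sigma_g$ is induced by a fixed-point-free self-map, e.g.\ the free involution exhibiting $\Sigma_g$ as the orientation double covering of a non-orientable surface; cf.~\cite{GG1}), so that $P_2(\Sigma_g)\cong F\rtimes\pi_1(\Sigma_g)$ is a $2$-almost direct product, to which the mod-$2$ Falk--Randell machinery (as in \cite{BB,BGe}) applies and yields residual $2$-finiteness of $P_2(\Sigma_g)$; your Gruenberg-type reduction then finishes the proof for $B_2(\Sigma_g)$. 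Without either that splitting or an actual proof of your non-split criterion, the argument is incomplete.
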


\begin{rem}\label{rem:notnilp}
To prove some of our results, we will need to be sure that our residually nilpotent groups are not nilpotent, in particular that all of their lower central series quotients are non trivial. We claim that this is the case for the group $B_2(\Sigma_g)$ for all $g\geq 1$. If $g=1$, the result follows from~\cite[Theorem~3]{BGeG} (note that the group $\Z_2 \ast \Z_2 \ast \Z_2$ contains a subgroup that is a free group of rank $2$).
So assume that $g>1$, and suppose on the contrary that there exists $i\in \N$ such that $\Gamma_i(B_2(\Sigma_g))=\brak{1}$. Without loss of generality, we may suppose that $i$ is minimal with respect to this property. Since $B_2(\Sigma_g)$ is non Abelian, it follows from \reth{gam3closed}\ref{it:gam12g} that $i\geq 3$. Now $\Gamma_{i}(B_2(\Sigma_g))= [\Gamma_{i-1}(B_2(\Sigma_g)), B_2(\Sigma_g)]=\brak{1}$, and hence $\Gamma_{i-1}(B_2(\Sigma_g))$ is contained in the centre of $B_2(\Sigma_g)$. This centre is trivial~\cite{GGmpcps,PR}, so $\Gamma_{i-1}(B_2(\Sigma_g))=\brak{1}$, but this contradicts the minimality of $i$, and so proves the result in this case.
\end{rem}


The computation of the lower central series quotients in the case $n=2$ and $g>1$, namely the generalisation of \reth{renil} and~\cite[Theorem~3]{BGeG} to surfaces of arbitrary genus, remains an open problem. The following result nevertheless gives some information about the quotient $\Gamma_2(B_2(\Sigma_g))/\Gamma_3(B_2(\Sigma_g))$.

\begin{prop}\label{prop:pre}
If $g\geq 1$,
the group $\Gamma_2(B_2(\Sigma_g))/\Gamma_3(B_2(\Sigma_g))$ is non-trivial, and is a quotient of $\Z_2^{2g} \oplus \Z_{g+1}$.
\end{prop}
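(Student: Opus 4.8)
The plan is to compute the abelianisation of the group $\Gamma_2(B_2(\Sigma_g))/\Gamma_3(B_2(\Sigma_g))$ directly from the presentation of $B_2(\Sigma_g)$ given in \reth{presbng} with $n=2$, since $\Gamma_2(B_2(\Sigma_g))/\Gamma_3(B_2(\Sigma_g))$ is itself abelian. When $n=2$, the only braid generator is $\sigma_1$, there are no relations of type~\reqref{artin1} or~\reqref{artin2}, the commuting relations~\reqref{cs} are vacuous, and the surface relation~\reqref{tot} becomes $\prod_{i=1}^g[a_i^{-1},b_i]=\sigma_1^2$. So $B_2(\Sigma_g)$ is generated by $a_1,b_1,\dots,a_g,b_g,\sigma_1$ subject to~\reqref{cici}, \reqref{ab}, \reqref{cicj} and $\prod_{i=1}^g[a_i^{-1},b_i]=\sigma_1^2$.

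First I would identify $\Gamma_2/\Gamma_3$ with the kernel of the natural map $B_2(\Sigma_g)_{\mathrm{Ab}} \leftarrow$ from the free abelianisation of the associated graded, i.e.\ I would use the standard fact that if $G$ has generating set $X$ and relating set $R$, then $\Gamma_2(G)/\Gamma_3(G)$ is the quotient of $\Lambda^2(G_{\mathrm{Ab}})$ (more precisely of the degree-$2$ part of the free Lie ring on $X$, tensored appropriately to account for torsion in $G_{\mathrm{Ab}}$) by the images of the relators read modulo $\Gamma_3$. Concretely, write each generator's class in $G_{\mathrm{Ab}}$; by \reth{gam3closed}\ref{it:gam12g}, $G_{\mathrm{Ab}}\cong \Z^{2g}\oplus\Z_2$, with the $\Z^{2g}$ generated by the classes $\bar a_i,\bar b_i$ and the $\Z_2$ generated by the class $\bar\sigma$ of $\sigma_1$ (the surface relation forces $2\bar\sigma$ to be a product of commutators, hence trivial in $G_{\mathrm{Ab}}$). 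Then $\Gamma_2/\Gamma_3$ is generated, as an abelian group, by the brackets of pairs of these classes, i.e.\ by $[\bar a_i,\bar b_j], [\bar a_i,\bar a_j], [\bar b_i,\bar b_j]$ for $i<j$, by $[\bar a_i,\bar b_i]$, and by the brackets $[\bar\sigma,\bar a_i],[\bar\sigma,\bar b_i]$. The next step is to feed in the relators. Relation~\reqref{cici}, namely $c_i\sigma_1 c_i\sigma_1=\sigma_1 c_i\sigma_1 c_i$, rewrites as $[c_i,\sigma_1 c_i\sigma_1^{-1}]=1$, and modulo $\Gamma_3$ this gives $2[\bar\sigma,\bar c_i]=0$ (using $c_i(\sigma_1 c_i\sigma_1^{-1})=c_i^2 \cdot[c_i^{-1},\sigma_1 c_i\sigma_1^{-1}]$-type expansions); together with the relations~\reqref{cicj} and~\reqref{ab} expanded modulo $\Gamma_3$, one should find that the $[\bar\sigma,\bar a_i]$ and $[\bar\sigma,\bar b_i]$ become $2$-torsion, contributing the $\Z_2^{2g}$ summand, while the cross-brackets $[\bar a_i,\bar a_j]$, $[\bar b_i,\bar b_j]$, $[\bar a_i,\bar b_j]$ with $i\ne j$ and the mixed bracket from~\reqref{ab} are killed or identified; finally the surface relation $\prod[a_i^{-1},b_i]=\sigma_1^2$, read in $\Gamma_2/\Gamma_3$, identifies $-\sum_i[\bar a_i,\bar b_i]$ with $2\cdot(\text{class of }\sigma_1\text{ in the graded})$; since $\bar\sigma$ is $2$-torsion in $G_{\mathrm{Ab}}$ the "square" $\sigma_1^2$ lives in $\Gamma_2$ and equals, modulo $\Gamma_3$, a generator of order dividing something — this is where the $\Z_{g+1}$ factor appears, as the remaining cyclic quotient generated by a single class $[\bar a_i,\bar b_i]$ (all of which are identified up to sign) subject to one relation of the form $(g+1)x=0$ coming from combining the surface relation with the $2$-torsion of $\bar\sigma$. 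Assembling, one gets a surjection $\Z_2^{2g}\oplus\Z_{g+1}\twoheadrightarrow \Gamma_2(B_2(\Sigma_g))/\Gamma_3(B_2(\Sigma_g))$.

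For non-triviality, I would invoke \rerem{notnilp}: since $B_2(\Sigma_g)$ is residually nilpotent (\repr{n=2orientable}) but not nilpotent, every lower central quotient, in particular $\Gamma_2/\Gamma_3$, is non-trivial. The main obstacle is the bookkeeping in the middle step: correctly expanding each defining relator modulo $\Gamma_3$, keeping careful track of signs and of the fact that $\bar\sigma$ already has order $2$ in $G_{\mathrm{Ab}}$ (so that $[\bar\sigma,\bar\sigma]$-type and $\sigma_1^2$-type contributions interact), and verifying that no further collapse occurs beyond what is forced — i.e.\ that the natural surjection from $\Z_2^{2g}\oplus\Z_{g+1}$ is exactly the one claimed and not onto a smaller group by some hidden consequence of the braid-surface relations. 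Since the statement only asserts that the quotient is \emph{a} quotient of $\Z_2^{2g}\oplus\Z_{g+1}$ (and is non-trivial), I do not need to pin down the kernel precisely; it suffices to exhibit the generating set of size matching $\Z_2^{2g}\oplus\Z_{g+1}$ and the relations among the generators that show every relation of that abelian group holds in $\Gamma_2/\Gamma_3$. The cleanest route is probably to pass to the quotient $B_2(\Sigma_g)/\Gamma_3(B_2(\Sigma_g))$, whose presentation is obtained by adjoining to the presentation above the relations making all triple commutators trivial, and then simply abelianise its commutator subgroup — a finite, if tedious, computation.
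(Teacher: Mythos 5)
Your proposal is correct in substance and follows essentially the same route as the paper: specialise the presentation of \reth{presbng} to $n=2$, expand the relators bilinearly modulo $\Gamma_3$, conclude that $\Gamma_2(B_2(\Sigma_g))/\Gamma_3(B_2(\Sigma_g))$ is generated by the $2g$ classes $[c_i,\sigma_1]$ of order at most $2$ together with the single class $\sigma_1^{2}$ (to which all the $[b_i,a_i]$ are identified via~\reqref{ab}) of order dividing $g+1$ by~\reqref{tot}, and obtain non-triviality from \rerem{notnilp}; the paper does exactly this, phrased via the Witt--Hall identities, treating $g=1$ separately by quoting \reth{renil}. One slip to correct: relation~\reqref{cici} rewrites as $[c_i,\sigma_1 c_i\sigma_1]=1$, not $[c_i,\sigma_1 c_i\sigma_1^{-1}]=1$; modulo $\Gamma_3$ the former yields the needed $2[\bar\sigma,\bar c_i]=0$, whereas the latter expands to the trivial identity, so the $2$-torsion of the $[c_i,\sigma_1]$ must be derived from the correct rewriting (your stated conclusion is nevertheless the right one). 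Likewise, the relation $(g+1)\cdot\sigma_1^{2}=0$ comes from combining~\reqref{tot} with~\reqref{ab}, rather than with the $2$-torsion of $\bar\sigma$ alone.
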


\begin{proof}
If $g=1$ then by \reth{renil}, $\Gamma_2(B_2(\T))/\Gamma_3(B_2(\T))\cong \Z_{2}^{3}$, and the result holds. So suppose that $g>1$. In what follows we will make use freely of the Witt-Hall identities~\cite[Theorem~5.1]{MKS}. By relation~(\ref{eq:cicj}), we have $1=[c_i ,\sigma_1^{-1}  c_j \sigma_1]$ for $c_i=a_i$ or $b_i$, $c_j=a_j$ or $b_j$ and for all $1\le j<i\le g$ in $B_2(\Sigma_g)$, from which it follows that $1=[c_i,  c_j]$ in $B_2(\Sigma_g)/\Gamma_3(B_2(\Sigma_g))$.   
Using relation~(\ref{eq:cici}), we have $1=[c_i, \sigma_1 c_i \sigma_1]$ in $B_2(\Sigma_g)$ for $c_i=a_i$ or $b_i$ and for all $i=1, \ldots, g$, which implies that $1=[c_i,  \sigma_1]^2$ in $B_2(\Sigma_g)/\Gamma_3(B_2(\Sigma_g))$.
Similarly, from relation~(\ref{eq:ab}), we obtain $[b_i^{-1}, \sigma_1^{-1} a_i  \sigma_1]=\sigma_1^2$ in $B_2(\Sigma_g)$ for all $i=1, \ldots, g$, and therefore $[b_i^{-1},   a_i]=[b_i,   a_i]^{-1}=\sigma_1^2$ in  $B_2(\Sigma_g)/\Gamma_3(B_2(\Sigma_g))$. Since $\prod_{i=1}^g\, [a_i^{-1},b_i]=\prod_{i=1}^g\, [b_i, a_i]$ in $B_2(\Sigma_g)/\Gamma_3(B_2(\Sigma_g))$ by relation~(\ref{eq:tot}), we see that $\sigma_1^{-2g}=\sigma_1^2$, and thus the order of $\sigma_1^2$ in $\Gamma_2(B_2(\Sigma_g))/\Gamma_3(B_2(\Sigma_g))$ divides $g+1$. These computations imply that $\Gamma_2(B_2(\Sigma_g))/\Gamma_3(B_2(\Sigma_g))$ is an Abelian group that is generated by the commutators 
$[c_i, \sigma_1]$ for $c_i=a_i$ or $b_i$ and $i=1, \ldots, g$, which are all of order at most $2$, and
the commutators $[b_i,a_i]$, where $i=1, \ldots, g$, and which are all identified to a single element $\sigma_{1}^{2}$ of order at most $g+1$. Consequently, $\Gamma_2(B_2(\Sigma_g))/\Gamma_3(B_2(\Sigma_g))$ is a quotient of $\Z_2^{2g} \oplus \Z_{g+1}$. \rerem{notnilp} implies that this quotient is non trivial, which proves the result. 
\end{proof}

\begin{prop}\label{prop:perfect}
If $g\geq 1$ and $n\in \brak{3,4}$, the group $\Gamma_3(B_n(\Sigma_{g}))$ is not perfect.
\end{prop}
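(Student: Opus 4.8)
The plan is to study the structure of $\Gamma_3(B_n(\Sigma_g))/\Gamma_4(B_n(\Sigma_g))$ and show it is non-trivial for $n\in\brak{3,4}$; since $\Gamma_3/\Gamma_4$ is precisely the abelianisation of $\Gamma_3(B_n(\Sigma_g))$, a non-trivial quotient prevents $\Gamma_3(B_n(\Sigma_g))$ from being perfect. By \reth{gam3closed}\ref{it:gam3g} we already know $\Gamma_3(B_n(\Sigma_g))=\Gamma_4(B_n(\Sigma_g))$ when $n\geq 3$ is equivalent to... wait, no: that part says equality holds \emph{iff} $n\geq 3$, so for $n\in\brak{3,4}$ we have $\Gamma_3=\Gamma_4$ and hence cannot use $\Gamma_3/\Gamma_4$ directly. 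Instead I would work one level up and examine $\Gamma_3(B_n(\Sigma_g))/\Gamma_5(B_n(\Sigma_g))$ or, more cleanly, exhibit an explicit non-perfect quotient of $\Gamma_3(B_n(\Sigma_g))$ coming from a convenient quotient of $B_n(\Sigma_g)$ itself.

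Concretely, I would proceed as follows. First, using the presentation of \reth{presbng}, I would construct a quotient $Q$ of $B_n(\Sigma_g)$ in which computations are manageable — for instance by killing the surface generators $a_i,b_i$ for $i\geq 2$ and imposing enough relations to reduce to a metabelian-type or nilpotent-by-something group, or by mapping onto an explicitly known group such as a quotient of $B_n$ or of a related surface braid group whose lower central series is understood. The key point is to arrange that the image of $\Gamma_3(B_n(\Sigma_g))$ in $Q$, namely $\Gamma_3(Q)$, is non-perfect, which for a small nilpotency class reduces to a finite computation with commutators. Second, I would carry out the commutator bookkeeping: for $n=3$ and $n=4$ the braid part contributes generators $\sigma_1,\sigma_2$ (and $\sigma_3$ when $n=4$), and one checks via the Witt--Hall identities — exactly as in the proof of \repr{pre} — that the relevant triple commutators $[[\sigma_i,\sigma_j],\sigma_k]$ and their interactions with the $a_i,b_i$ do not all collapse, producing at least one element of $\Gamma_3$ that survives in $\Gamma_3/[\Gamma_3,\Gamma_3]$ of the quotient. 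The small exceptional values $n=3,4$ should be handled here because, by the last sentence of \reth{gam3closed}\ref{it:gam3g}, $\Gamma_3$ only becomes perfect once $n\geq 5$, so the obstruction is genuinely concentrated in these two cases and one expects it to be visible already in a two- or three-step nilpotent quotient.

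The main obstacle I anticipate is identifying the right quotient $Q$ in which $\Gamma_3(Q)$ is provably non-perfect while keeping the commutator calculus finite: one has to kill enough of $B_n(\Sigma_g)$ to make the computation tractable but not so much that $\Gamma_3$ itself becomes trivial or accidentally perfect. A natural candidate is the quotient by $\Gamma_4$ of a suitable \emph{subgroup} — or, better, to use the known structure of $B_n(\Sigma_g)/\Gamma_k$ for small $k$ from~\cite{BGeG} (referenced in \rex{exo1}) and track where $\Gamma_3$ sits inside it. An alternative, possibly cleaner route is to use the Fadell--Neuwirth type forgetful surjection $B_n(\Sigma_g)\to B_1(\Sigma_g)=\pi_1(\Sigma_g)$, or the surjection onto $B_n$ obtained by capping off, to push the problem to a group where non-perfectness of the third lower central term is already recorded; but since $\Gamma_3(B_n)=\Gamma_2(B_n)$ is perfect for the relevant $n$ by \repr{braidlcs}, the surface generators must be doing the work, so one cannot avoid a genuine surface-braid-group computation. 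I would therefore plan to present the argument as an explicit finite check on commutators in $B_n(\Sigma_g)/\Gamma_4(\Gamma_3(B_n(\Sigma_g)))$ modelled word-for-word on the method used for \repr{pre}, invoking \reth{gam3closed}\ref{it:gam3g} to restrict attention to $n\in\brak{3,4}$ and to guarantee that the construction cannot be extended to larger $n$.
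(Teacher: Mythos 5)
Your proposal never gets past the planning stage: no quotient $Q$ is actually exhibited and no commutator computation is carried out, and the two heuristics you lean on both fail. First, since $\Gamma_3(B_n(\Sigma_g))=\Gamma_k(B_n(\Sigma_g))$ for every $k\geq 3$ when $n\geq 3$ (\reth{gam3closed}\ref{it:gam3g}), the image of $\Gamma_3(B_n(\Sigma_g))$ in \emph{any} nilpotent quotient of $B_n(\Sigma_g)$ is trivial; so the obstruction can never be ``visible already in a two- or three-step nilpotent quotient'', and a computation modelled word-for-word on \repr{pre} (which works in $B_2(\Sigma_g)/\Gamma_3(B_2(\Sigma_g))$) is structurally incapable of saying anything about the abelianisation of $\Gamma_3$. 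Second, your reason for insisting that ``the surface generators must be doing the work'' rests on a misreading of \repr{braidlcs}: that proposition states $\Gamma_2(B_n)=\Gamma_3(B_n)$ and $B_n/\Gamma_2(B_n)\cong\Z$, not that $\Gamma_2(B_n)$ is perfect; in fact $\Gamma_2(B_n)$ is perfect only for $n\geq 5$, so for $n\in\brak{3,4}$ the phenomenon you are trying to detect is not a surface phenomenon at all.

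The missing idea is much simpler and constitutes the paper's entire proof: use the canonical surjection $\pi_n\colon B_n(\Sigma_g)\to S_n$ of~\reqref{permutation}. A surjective homomorphism maps each term of the lower central series onto the corresponding term of the target, so $\Gamma_3(B_n(\Sigma_g))$ surjects onto $\Gamma_3(S_n)=A_n$, and for $n\in\brak{3,4}$ the group $A_n$ has abelianisation $\Z_3\neq\brak{1}$ (for $n=4$ because $[A_4,A_4]$ is the Klein four-group). A perfect group cannot surject onto a group with non-trivial abelianisation, so $\Gamma_3(B_n(\Sigma_g))$ is not perfect. No commutator calculus in the surface braid group is required, and the same argument applies verbatim to $B_n$ itself, which is exactly why your attempted dismissal of quotients forgetting the surface structure led you away from the correct, elementary route.
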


\begin{proof}
Let $g\geq 1$ and $n\in \brak{3,4}$. Let $\pi_n\colon\thinspace B_n(\Sigma_{g}) \to S_n$ be the homomorphism  that arises in~(\ref{eq:permutation}), and for $i\geq 2$, let $\pi_{n,i}\colon\thinspace \Gamma_i(B_n(\Sigma_{g})) \to \Gamma_i(S_n)$ denote the induced surjective homomorphism between the corresponding terms of the lower central series. A straightforward computation shows that $\Gamma_2(S_n)=\Gamma_3(S_n)=A_{n}$, where $A_{n}$ is the alternating group, and that $\Gamma_3(S_n)/[\Gamma_3(S_n),\Gamma_3(S_n)]$ is isomorphic to $\Z_3$. Now the homomorphism $\pi_{n,3}$ induces a surjection at the level of Abelianisations, and since $\Gamma_3(S_n)/[\Gamma_3(S_n),\Gamma_3(S_n)]$ is non trivial, we conclude that $\Gamma_3(B_n(\Sigma_{g}))$ cannot be perfect.
\end{proof}

\begin{proof}[Proof of Theorem \ref{th:gam3closed}]
First assume that $n=1$. We have $B_1(\Sigma_{g})= \pi_1(\Sigma_{g})$, which is residually free, and therefore residually (torsion free) nilpotent. Further, by~\cite[Main Theorem]{LA}, $\Gamma_i(B_1(\Sigma_g))/\Gamma_{i+1}(B_1(\Sigma_g))$ is isomorphic to $\Z^{2g}$ if $i=1$, to $\Z^{g(2g-1)-1}$ if $i=2$, and to    $\Z^{4g(g^{2}-1)}$ if $i=3$. In particular $\Gamma_3(B_1(\Sigma_g))$ is not perfect. Therefore all statements of the theorem pertaining to the case $n=1$ hold.

Now suppose that $n=2$. It follows in a straightforward manner from \reth{presbng} that $\Gamma_1(B_2(\Sigma_g))/\Gamma_2(B_2(\Sigma_g)) \cong \Z^{2g} \oplus \Z_{2}$. Part~\ref{it:gam23g}\ref{it:gam23gn2} follows from \reth{renil} and \repr{pre}, and parts~\ref{it:gam3g} and~\ref{it:gam4g} in the case $n=2$ are a consequence of \reth{renil}, \repr{n=2orientable} and \rerem{notnilp}. 

Finally, let $n\ge 3$. Parts~\ref{it:gam12g},~\ref{it:gam23g}\ref{it:gam23gn},~\ref{it:gam4g}, and the sufficiency of the condition in part~\ref{it:gam3g}  were proved in~\cite[Theorem~1]{BGeG}. Part~\ref{it:gam3g} in the case  $n\in \brak{3,4}$ is a consequence of  \repr{perfect}. This completes the proof of the theorem.
\end{proof}

\subsection{Surjections between braid groups of orientable surfaces of non-zero genus without boundary}\label{sec:suror}

With the notation of~\cite{GGzeit}, if $\Gamma$ is a finitely-generated group, let $G(\Gamma)$ denote the minimal cardinality among all generating sets of $\Gamma$. By~\cite[Proposition~8]{GGzeit}, if $\Gamma'$ is another finitely-generated group such that there exists a surjective homomorphism from $\Gamma$ to $\Gamma'$ then:
\begin{equation}\label{eq:Abelian}
\text{$G(\Gamma)\geq G(\Gamma')$ and $G(\Gamma)\geq G(\Gamma_{\text{Ab}})$,}
\end{equation}
the second inequality following from the first by taking the homomorphism to be Abelianisation.
The following proposition generalises some of the principal results of~\cite{GGzeit} to the case of orientable surfaces of genus $g\geq 1$.  

\begin{prop}\label{prop:mingen}
Let $g,m\in \N$. Then $G(B_{m}(\Sigma_{g}))=\begin{cases}
2g+m-1 & \text{if $m\in \{ 1,2\}$}\\
2g+2 & \text{if $m\geq 3$.}
\end{cases}$
\end{prop}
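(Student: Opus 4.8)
The plan is to prove the two inequalities $G(B_m(\Sigma_g)) \leq (\text{claimed value})$ and $G(B_m(\Sigma_g)) \geq (\text{claimed value})$ separately, using the presentation of \reth{presbng} for the upper bound and the lower central series quotients computed in \reth{gam3closed} together with~\reqref{Abelian} for the lower bound.

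For the \textbf{upper bound}, I would start from the generating set $\brak{a_1,b_1,\ldots,a_g,b_g,\sigma_1,\ldots,\sigma_{m-1}}$ of \reth{presbng}. When $m\in\brak{1,2}$ this set already has $2g+m-1$ elements, giving the bound immediately. When $m\geq 3$, the point is that the Artin relations~\reqref{artin2} allow one to write $\sigma_{i+1}$ in terms of $\sigma_i$ and $\sigma_1$ (conjugating appropriately), so that $\sigma_1$ and $\sigma_2$ suffice to generate all of $\sigma_1,\ldots,\sigma_{m-1}$; hence $\brak{a_1,b_1,\ldots,a_g,b_g,\sigma_1,\sigma_2}$ generates $B_m(\Sigma_g)$, and this set has $2g+2$ elements. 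This is the routine half and closely mirrors the classical computation that $G(B_m)=2$ for $m\geq 3$.

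For the \textbf{lower bound}, the tool is~\reqref{Abelian}: $G(B_m(\Sigma_g)) \geq G\bigl((B_m(\Sigma_g))_{\text{Ab}}\bigr)$. By \reth{gam3closed}\ref{it:gam12g}, the Abelianisation is $\Z^{2g}$ if $m=1$ and $\Z^{2g}\oplus\Z_2$ if $m\geq 2$. The minimal number of generators of a finitely generated abelian group is the maximum over all primes $p$ of the dimension of its mod-$p$ reduction, so $G(\Z^{2g})=2g$ and $G(\Z^{2g}\oplus\Z_2)=2g+1$. This already settles the cases $m=1$ (giving $\geq 2g = 2g+m-1$) and $m=2$ (giving $\geq 2g+1 = 2g+m-1$), matching the upper bound. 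For $m\geq 3$ the Abelianisation only gives $G(B_m(\Sigma_g))\geq 2g+1$, which is one short of the claimed $2g+2$, so a finer argument is needed here.

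The \textbf{main obstacle} is therefore to improve the lower bound from $2g+1$ to $2g+2$ when $m\geq 3$. The idea is to use the second lower central quotient as well: by~\reqref{Abelian} applied to the surjection $B_m(\Sigma_g)\to B_m(\Sigma_g)/\Gamma_3(B_m(\Sigma_g))$, we have $G(B_m(\Sigma_g))\geq G\bigl(B_m(\Sigma_g)/\Gamma_3(B_m(\Sigma_g))\bigr)$. So it suffices to show $G(B_m(\Sigma_g)/\Gamma_3(B_m(\Sigma_g)))\geq 2g+2$. Here one uses the explicit presentation of $B_m(\Sigma_g)/\Gamma_3(B_m(\Sigma_g))$ recalled after \reth{gam3closed} (and available in \rex{exo1}): this is a nilpotent group of class $2$ whose abelianisation is $\Z^{2g}\oplus\Z_2$ and whose commutator subgroup $\Gamma_2/\Gamma_3\cong\Z_{m-1+g}$ (by \reth{gam3closed}\ref{it:gam23g}\ref{it:gam23gn}) is generated by the image of $[b_1,a_1]=\sigma_1^{2}$, which is \emph{not} in the Frattini-type subgroup generated by squares and commutators of the abelianisation when $g\geq 1$ — more precisely one checks, via the commutator pairing $\Lambda^2(\Z^{2g}\oplus\Z_2)\to\Z_{m-1+g}$ induced by the presentation, that no set of $2g+1$ elements can generate, because the abelianisation needs $2g+1$ generators and at least one further generator is forced to hit the $\sigma_1$-direction whose square generates $\Gamma_2/\Gamma_3$ independently. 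Concretely, I would argue: if $x_1,\ldots,x_k$ generate $B_m(\Sigma_g)/\Gamma_3$, their images generate $\Z^{2g}\oplus\Z_2$, forcing $k\geq 2g+1$; and if $k=2g+1$ one derives a contradiction by showing the induced alternating form on the images would have to be non-degenerate in a way incompatible with $2g+1$ being odd and with the structure of $\Z_2$-summand plus the order-$(m-1+g)$ commutator — a rank/parity obstruction. This parity-and-nondegeneracy bookkeeping for the class-$2$ quotient is the technical heart of the proof; everything else is bookkeeping with \reth{presbng} and \reth{gam3closed}.
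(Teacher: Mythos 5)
Your treatment of the cases $m\in\brak{1,2}$ is fine and matches the paper, but both halves of your argument for $m\geq 3$ have genuine problems, and the one in the lower bound is fatal to your strategy. The quotient $B_m(\Sigma_g)/\Gamma_3(B_m(\Sigma_g))$ is a finitely generated nilpotent group, and in any such group the commutator subgroup lies in the Frattini subgroup; hence its minimal number of generators equals that of its abelianisation, namely $G(\Z^{2g}\oplus\Z_2)=2g+1$. Concretely, the presentation recalled in \rex{exo1} already exhibits the $2g+1$ generators $a_1,b_1,\ldots,a_g,b_g,\sigma$: the element $\sigma^{2}$ that generates $\Gamma_2/\Gamma_3\cong\Z_{m-1+g}$ is the commutator $[a_1,b_1]$, so it is a non-generator, and no parity or non-degeneracy bookkeeping can force a set of $2g+1$ elements to fail — indeed $k=2g+1$ does generate. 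The same remark shows that \emph{no} nilpotent quotient of $B_m(\Sigma_g)$ can give a bound better than $2g+1$. The paper's proof leaves the nilpotent world: using \reth{presbng}, it takes the surjection $f\colon\thinspace B_m(\Sigma_g)\to\Z^{2g}$ whose kernel is the normal closure of $\brak{\sigma_1,\ldots,\sigma_{m-1}}$ and the permutation surjection $h\colon\thinspace B_m(\Sigma_g)\to S_m$ of~\reqref{permutation}, checks that $f\times h\colon\thinspace B_m(\Sigma_g)\to\Z^{2g}\times S_m$ is surjective, and concludes from~\reqref{Abelian} that $G(B_m(\Sigma_g))\geq G(\Z^{2g}\times S_m)=2g+G(S_m)\geq 2g+2$, since $S_m$ is not cyclic for $m\geq 3$. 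That surjection onto $\Z^{2g}\times S_m$ is the key idea missing from your proposal.

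Your upper bound is also wrong as written for $m\geq 4$: the Artin relations do not let you express $\sigma_{i+1}$ in terms of $\sigma_i$ and $\sigma_1$, and the set $\brak{a_1,b_1,\ldots,a_g,b_g,\sigma_1,\sigma_2}$ does not generate $B_m(\Sigma_g)$, since its image under the permutation homomorphism is $\ang{(1,2),(2,3)}\cong S_3\subsetneq S_m$ (the $a_i,b_i$ being pure). The classical fact $G(B_m)=2$ uses $\sigma_1$ together with $\delta=\sigma_1\sigma_2\cdots\sigma_{m-1}$, via $\sigma_{i+1}=\delta\sigma_i\delta^{-1}$; accordingly the correct $(2g+2)$-element generating set, used in the paper following~\cite{GGzeit}, is $\brak{a_1,b_1,\ldots,a_g,b_g,\sigma_1,\sigma_1\cdots\sigma_{m-1}}$. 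This half of your argument is easily repaired, but the lower bound genuinely needs the non-nilpotent quotient $\Z^{2g}\times S_m$ (or something equivalent).
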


\begin{proof}
If $m\in \{ 1,2\}$, then $(B_{m}(\Sigma_{g}))_{\text{Ab}}\cong \Z^{2g}\oplus \Z_2^{m-1}$ using \reth{gam3closed}\ref{it:gam12g}, so $G(B_{m}(\Sigma_{g}))\geq 2g+m-1$ by~\ref{eq:Abelian}. 
By taking the generating set of $B_{m}(\Sigma_{g})$ given in \reth{presbng}, we see also that $G(B_{m}(\Sigma_{g}))\leq 2g+m-1$, which proves the result in this case. So assume that $m\geq 3$. As in the proof of~\cite[Proposition~4]{GGzeit}, using \reth{gam3closed}, we see that $\{ a_{1}, b_{1},\ldots, a_{g}, b_{g}, \sigma_{1}, \sigma_{1}\cdots \sigma_{m-1} \}$ is a generating set for $B_{m}(\Sigma_{g})$, and so $G(B_{m}(\Sigma_{g}))\leq 2g+2$. Conversely, with respect to the presentation given by \reth{presbng}, let $f\colon\thinspace B_{m}(\Sigma_{g}) \to \Z^{2g}$ be the surjective homomorphism whose kernel is the normal closure of $\{ \sigma_{1},\ldots, \sigma_{m-1}\}$ in $B_{m}(\Sigma_{g})$, and let $h\colon\thinspace B_{m}(\Sigma_{g}) \to S_{m}$  be the surjective homomorphism given by equation~(\ref{eq:permutation}) whose kernel is $P_{m}(\Sigma_{g})$. Using \reth{presbng}, $h$ may also be seen to be the projection onto the quotient of $B_{m}(\Sigma_{g})$ by the normal closure of the set $\brak{a_{1}, b_{1},\ldots, a_{g},b_{g}}$. 
The map $f\times h\colon\thinspace B_{m}(\Sigma_{g}) \to \Z^{2g} \times S_{m}$ is clearly a homomorphism. To see that it is surjective, note that if $(w,\tau)\in \Z^{2g} \times S_{m}$, there exist $\alpha\in P_{m}(\Sigma_{g})$ and $\beta$ belonging to the subgroup of $B_{m}(\Sigma_{g})$ generated by $\{ \sigma_{1},\ldots, \sigma_{m-1}\}$ such that $f(\alpha)=w$ and $h(\beta)= \tau$. From the description of $f$ and $h$, we have $(f\times h)(\alpha\beta)=(w,\tau)$, which proves that $f\times h$ is surjective. So by~\ref{eq:Abelian},
$G(B_{m}(\Sigma_{g})) \geq G(\Z^{2g} \times S_{m})=2g+G(S_{m})\geq 2g+2$ because $m\geq 3$. Thus $G(B_{m}(\Sigma_{g}))=2g+2$, and the statement then follows in this case.
\end{proof}

\begin{cor}\label{cor:special}
Let $m\geq 3$, $n\in \brak{1,2}$ and $g\geq 1$. Then there is no surjective homomorphism from $B_{n}(\Sigma_{g})$ to $B_{m}(\Sigma_{g})$.
\end{cor}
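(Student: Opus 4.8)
The plan is to deduce this immediately from the computation of minimal generating sets in \repr{mingen} together with the monotonicity estimate~\reqref{Abelian}. The point is that a surjective homomorphism cannot increase the minimal number of generators, so it suffices to check that $G(B_n(\Sigma_g))$ is strictly smaller than $G(B_m(\Sigma_g))$ under the stated hypotheses.

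Concretely, I would first record that since $n\in\brak{1,2}$, \repr{mingen} gives $G(B_n(\Sigma_g))=2g+n-1\leq 2g+1$, while since $m\geq 3$, the same proposition gives $G(B_m(\Sigma_g))=2g+2$. Suppose, for a contradiction, that there is a surjective homomorphism from $B_n(\Sigma_g)$ onto $B_m(\Sigma_g)$. Applying the first inequality of~\reqref{Abelian} with $\Gamma=B_n(\Sigma_g)$ and $\Gamma'=B_m(\Sigma_g)$ yields $2g+1\geq G(B_n(\Sigma_g))\geq G(B_m(\Sigma_g))=2g+2$, which is absurd. Hence no such surjection exists.

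There is essentially no obstacle here: the entire content of the corollary has been pushed into \repr{mingen}, whose proof in turn relies on \reth{gam3closed}\ref{it:gam12g} (to bound the number of generators of the abelianisation from below when $m\in\brak{1,2}$) and on the auxiliary surjection $f\times h\colon\thinspace B_m(\Sigma_g)\to \Z^{2g}\times S_m$ (to force $G(B_m(\Sigma_g))\geq 2g+G(S_m)\geq 2g+2$ when $m\geq 3$). If one wanted to be self-contained one could inline those two observations, but given that \repr{mingen} is already available, the corollary is a one-line consequence and no genuinely new idea is needed.
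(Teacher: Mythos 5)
Your argument is correct and is precisely the paper's own proof: it deduces the corollary from \repr{mingen} together with the monotonicity of the minimal number of generators under surjections recorded in~\reqref{Abelian}, noting $G(B_m(\Sigma_g))=2g+2>2g+n-1=G(B_n(\Sigma_g))$ for $m\geq 3$ and $n\in\brak{1,2}$. No differences worth noting.
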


\begin{proof}
If $m\geq 3$, $n\in \brak{1,2}$ and $g\geq 1$, the result follows from~\ref{eq:Abelian} using the fact that $G(B_{m}(\Sigma_{g})) > G(B_{n}(\Sigma_{g}))$ by \repr{mingen}.
\end{proof}



\begin{proof}[Proof of \reth{LinSurfcomb} in the case where $\Sigma=\Sigma_{g}$ and $g\geq 1$]
Let $n>m\ge 2$, and consider the map from $B_n$ to $B_n(\Sigma_g)$ defined on the generators of $B_{n}$ by sending $\sigma_i$ to $\sigma_i$ for all $i=1, \ldots, n-1$. It is a homomorphism (note that by~\cite{PR}, it is also an embedding). 
Suppose first that $n>m\ge 3$ and $n\neq 4$, and let $\Phi \colon\thinspace B_n(\Sigma_g) \to S_{m}$ be a homomorphism.
By~\reth{Lin}, the elements $\Phi(\sigma_i)$, where $i=1, \ldots, n-1$, are powers of a single element, and therefore commute pairwise. Using the braid relations, the fact that $\Phi(\sigma_{i})$ commutes with $\Phi(\sigma_{i+1})$ for all $i=1, \ldots, n-2$ implies that $\Phi(\sigma_1)= \cdots= \Phi(\sigma_{n-1})$. We  denote this common element by $\sigma$. We see from relations~(\ref{eq:cs}) that $\sigma$ commutes with $\Phi(a_j)$ and $\Phi(b_j)$ for all $j=1, \ldots, g$. Suppose now that $\Phi$ is surjective. Then $\sigma$ belongs to the centre of $S_m$, which is trivial since $m\geq 3$, so $\sigma$ is trivial. Therefore the homomorphism $\Phi$ factors through the surjective homomorphism $\Phi'\colon\thinspace B_n(\Sigma_g)/\langle\!\langle \sigma_1\rangle\!\rangle \to S_m$, where $\langle\!\langle \sigma_1\rangle\!\rangle$ denotes the normal closure of $\sigma_{1}$ in $B_n(\Sigma_g)$. But using \reth{presbng} (\emph{cf.} the proof of \repr{mingen}), $B_n(\Sigma_g)/\langle\!\langle \sigma_1\rangle\!\rangle$ is isomorphic to $\Z^{2g}$, so is Abelian, while $S_{m}$ is not. This yields a contradiction, and hence $\Phi$ is not surjective.

Conversely, if $(n,m)=(4,3)$, the map from $B_4(\Sigma_g)$ to $S_3$ defined by sending the elements $a_1,b_1,\ldots, a_g, b_g$ to the identity element, $\sigma_1$ and $\sigma_3$ to $(1,2)$, and $\sigma_2$ to $(2,3)$, extends to a well-defined, surjective homomorphism by \reth{presbng}.  
\end{proof}

We are now able to prove \reth{gensurj} for the braid groups of orientable surfaces without boundary of genus $g\geq 1$.


\begin{proof}[Proof of \reth{gensurj}\ref{it:main1a}\ref{it:main1aii}]
Suppose first that $m=g=1$, and that $n\geq 2$. Since $B_{1}(\T)\cong \Z^{2}$, the result follows by considering the surjective homomorphism $f\colon\thinspace B_{n}(\T) \to \Z^{2}$ defined in the proof of \repr{mingen}. 
To prove the converse, we will show that if $(g,m)\neq (1,1)$, there is no surjective homomorphism from $B_{n}(\Sigma_{g})$ to $B_{m}(\Sigma_{g})$. We split the proof into the following three cases.
\begin{enumerate}[label=\textit{(\arabic*)}]
\item $n<m$. If $n\in \brak{1,2}$, the result follows from \reco{special}.
So suppose that $n\geq 3$. \reth{gam3closed}\ref{it:gam23g}\ref{it:gam23gn} implies that there is no surjective homomorphism from $\Gamma_2(B_n(\Sigma_g))/\Gamma_3(B_n(\Sigma_g))$ onto $\Gamma_2(B_m(\Sigma_g))/\Gamma_3(B_m(\Sigma_g))$, and hence there is no surjective homomorphism from $B_{n}(\Sigma_{g})$ onto $B_{m}(\Sigma_{g})$.

%
\item\label{it:ngrm} $n>m$, where either $g>1$ and $m\in \brak{1,2}$, or $g=1$ and $m=2$.
If $n\geq 3$, by \reth{gam3closed}\ref{it:gam3g}, $\Gamma_3(B_{n}(\Sigma_{g}))/\Gamma_{4}(B_{n}(\Sigma_{g}))$ is trivial, while $\Gamma_3(B_{m}(\Sigma_{g}))/\Gamma_{4}(B_{m}(\Sigma_{g}))$ is not, and this implies that there is no surjective homomorphism from $B_{n}(\Sigma_{g})$ onto $B_{m}(\Sigma_{g})$.
If $n=2$, $m=1$ and $g>1$, $\Gamma_2(B_{2}(\Sigma_{g}))/\Gamma_{3}(B_{2}(\Sigma_{g}))$ is finite by \reth{gam3closed}\ref{it:gam23g}\ref{it:gam23gn2}, and so it cannot surject onto $\Gamma_2(\pi_1(\Sigma_g))/\Gamma_3(\pi_1(\Sigma_g))$, which is a (non-trivial) free Abelian group by \reth{gam3closed}\ref{it:gam23g}\ref{it:gam23gn}.

\item\label{it:ngrmc} $n>m \geq 3$. Assume first that $n\neq 4$. There can be no surjection homomorphism  from $B_n(\Sigma_g)$ onto $B_{m}(\Sigma_{g})$, for otherwise its composition with the projection $B_{m}(\Sigma_{g})$ onto $S_{m}$ of~(\ref{eq:permutation}) would yield a surjective homomorphism from $B_n(\Sigma_g)$ onto $S_{m}$, which contradicts \reth{LinSurfcomb}. 
So assume that $n=4$. Then $m=3$, and there can be no surjective homomorphism from $B_4(\Sigma_g)$ to $B_3(\Sigma_g)$ because otherwise by \reth{gam3closed}\ref{it:gam23g}\ref{it:gam23gn}, there would be a surjective homomorphism from $\Gamma_2(B_4(\Sigma_g))/\Gamma_3(B_4(\Sigma_g))$, which is isomorphic to $\Z_{3+g}$, onto $\Gamma_2(B_3(\Sigma_g))/\Gamma_3(B_3(\Sigma_g))$, which is isomorphic to $\Z_{2+g}$, but this is impossible.\qedhere
\end{enumerate}
\end{proof}

\begin{cor}\label{cor:pure} 
Let $g\geq 1$, and let $n,m\in \N$. There is a surjective homomorphism of $B_n(\Sigma_g)$ onto $P_m(\Sigma_g)$ if and only if $n=m=1$ for $g \ge1$ and $m=1$ for $g=1$.
\end{cor}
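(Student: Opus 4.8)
The plan is to split according to whether $m=1$ or $m\ge 2$, using as the main tools the minimal number of generators $G(\cdot)$ together with \req{Abelian}, the lower central series quotients of $B_n(\Sigma_g)$ recorded in \reth{gam3closed}, and, for the case $m=1$, \reth{gensurj}\ref{it:main1a}\ref{it:main1aii}.

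For sufficiency I would simply exhibit the two surjections. If $n=m=1$ then $B_1(\Sigma_g)=\pi_1(\Sigma_g)=P_1(\Sigma_g)$ and the identity works for every $g\ge 1$. If $g=1$ and $m=1$, the target is $P_1(\T)=\pi_1(\T)\cong\Z^2$, and the surjective homomorphism $f\colon\thinspace B_n(\T)\to\Z^{2}$ constructed in the proof of \repr{mingen} (whose kernel is the normal closure of $\brak{\sigma_1,\ldots,\sigma_{n-1}}$) does the job.

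For necessity, assume there is a surjective homomorphism $\phi\colon\thinspace B_n(\Sigma_g)\to P_m(\Sigma_g)$. If $m=1$, then $P_1(\Sigma_g)=B_1(\Sigma_g)$, so $\phi$ is a surjection onto $B_1(\Sigma_g)$; if $n=1$ this is the case $n=m=1$, and if $n\ge 2$ then $n\neq m$, so \reth{gensurj}\ref{it:main1a}\ref{it:main1aii} forces $g=1$, as required. Now suppose $m\ge 2$. The inclusion $\FF_m(\Sigma_g)\hookrightarrow\Sigma_g^{m}$ of the configuration space into the $m$-fold Cartesian product induces a surjection $P_m(\Sigma_g)\to\pi_1(\Sigma_g)^{m}$ (deleting the codimension-$2$ locus $\Delta$ preserves surjectivity on $\pi_1$), so $H_1(P_m(\Sigma_g))$ surjects onto $\Z^{2gm}$, and hence $G(P_m(\Sigma_g))\ge 2gm$. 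Since $G(B_n(\Sigma_g))\le 2g+2$ by \repr{mingen}, \req{Abelian} yields $2gm\le 2g+2$, i.e.\ $g(m-1)\le 1$; as $g\ge 1$ and $m\ge 2$, this forces $g=1$ and $m=2$.

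The one remaining case, ruling out a surjection $B_n(\T)\to P_2(\T)$, is the step I expect to be the main obstacle, and I would deal with it using the structure of $P_2(\T)$: it is classical that $P_2(\T)\cong\Z^2\times F_2$ with $F_2$ free of rank $2$ — the translation action of $\T$ on $\FF_2(\T)$ is free with quotient $\T\setminus\brak{0}\simeq S^1\vee S^1$, and the resulting principal $\T$-bundle is trivial because $H^2(S^1\vee S^1;\Z^2)=0$. Consequently $G(P_2(\T))\ge 4$, while $\Gamma_2(P_2(\T))/\Gamma_3(P_2(\T))\cong\Gamma_2(F_2)/\Gamma_3(F_2)\cong\Z$. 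If $n\in\brak{1,2}$, then $G(B_n(\T))=n+1\le 3<4$ by \repr{mingen}, contradicting \req{Abelian}. If $n\ge 3$, a surjection $B_n(\T)\to P_2(\T)$ would induce a surjective homomorphism $\Gamma_2(B_n(\T))/\Gamma_3(B_n(\T))\to\Gamma_2(P_2(\T))/\Gamma_3(P_2(\T))$, that is, a surjection from $\Z_{n}$ (by \reth{gam3closed}\ref{it:gam23g}\ref{it:gam23gn} with $g=1$) onto $\Z$, which is impossible. Putting the cases together, a surjective homomorphism $B_n(\Sigma_g)\to P_m(\Sigma_g)$ exists precisely when $m=1$ and either $n=1$ or $g=1$, which is the assertion of the corollary. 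Beyond \reth{gensurj} and \reth{gam3closed}, the only external inputs are the $\pi_1$-surjectivity of the configuration-space inclusion and the decomposition of $P_2(\T)$, both standard.
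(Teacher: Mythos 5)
Your proof is correct, but the necessity argument for $m\ge 2$ takes a coarser route than the paper's, and it costs you an extra case. The paper simply notes that a surjection $B_n(\Sigma_g)\to P_m(\Sigma_g)$ induces a surjection of Abelianisations, and compares $(B_n(\Sigma_g))_{\text{Ab}}\cong\Z^{2g}$ or $\Z^{2g}\oplus\Z_2$ (\reth{gam3closed}\ref{it:gam12g}) with $(P_m(\Sigma_g))_{\text{Ab}}\cong\Z^{2gm}$: since the free rank $2g$ is strictly less than $2gm$ for every $m\ge 2$, all such $m$ are excluded at once, and one then concludes exactly as you do in your $m=1$ case via \reth{gensurj}\ref{it:main1a}\ref{it:main1aii}. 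You instead pass to the minimal number of generators, and because $G(B_n(\Sigma_g))$ can be as large as $2g+2$ (\repr{mingen}), the inequality $2gm\le 2g+2$ from \reqref{Abelian} leaves the borderline case $g=1$, $m=2$ open; you close it correctly, using the classical splitting $P_2(\T)\cong\Z^2\times F_2$ (so $G(P_2(\T))\ge 4$ handles $n\le 2$) and, for $n\ge 3$, the impossibility of a surjection $\Z_n\cong\Gamma_2(B_n(\T))/\Gamma_3(B_n(\T))\to\Gamma_2(P_2(\T))/\Gamma_3(P_2(\T))\cong\Z$. Your topological derivation of a surjection $P_m(\Sigma_g)\to\pi_1(\Sigma_g)^m$ (codimension-two removal) is a nice way to avoid quoting the presentation of $P_m(\Sigma_g)$, but note that the same abelianisation comparison the paper makes (free rank $2$ versus $4$ when $g=1$, $m=2$) would have disposed of your residual case immediately, without the bundle-triviality argument or the lower central series quotients.
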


\begin{proof}
Let $g\geq 1$. We first prove that the conditions are sufficient. If $n=m=1$, the result is clear since the given groups coincide with the fundamental group of the surface. 
If $g=m=1$ then the result follows from \reth{gensurj}\ref{it:main1a}\ref{it:main1aii}. Conversely, suppose that there exists a surjective homomorphism $\Phi\colon\thinspace B_n(\Sigma_g)\to P_m(\Sigma_g)$. Then $\Phi$ induces a surjective homomorphism of the corresponding Abelianisations, but since $(P_m(\Sigma_g))_{\text{Ab}}\cong \Z^{2gm}$ from the presentation of $P_m(\Sigma_g)$ given in~\cite{B} for instance, it follows from \reth{gam3closed}\ref{it:gam12g}
that $m=1$. Then either $n=1$, or $n>1$, in which case $g=1$ by \reth{gensurj}\ref{it:main1a}\ref{it:main1aii}, and in both cases, the conclusion holds.
\end{proof}

\begin{rem}
With the exception of the case $g=1$, \reco{pure} was proved in~\cite[Theorem~1.2]{Ch} using different methods. 
\end{rem}

\subsection{Surjections between braid groups of the sphere}\label{sec:sphere}

In this section, we complete the analysis of surjections between braid groups of orientable surfaces withour boundary by studying the case $g=0$, which is that of the sphere $\SP$. Theorems~\ref{th:gensurj} and~\ref{th:LinSurfcomb} hold also in this case, but the arguments are somewhat different. As we mentioned just after the statement of \reth{presbng}, if $n\in \N$, the presentation of $B_{n}(\SP)$ in~\cite{FV} may be obtained from the standard presentation of $B_{n}$ by adding the relation~(\ref{eq:totsph}), so $B_{n}(\SP)$ is a quotient of $B_{n}$. It follows from this presentation that $B_{1}(\SP)$ is trivial, $B_2(\SP)=\Z_2$, $B_3(\SP)=\Z_3 \rtimes \Z_4$ (with non-trivial action), and $B_n(\SP)$ is an infinite group for all $n\ge 4$~\cite[third theorem, p.~255]{FV}.
%
%
The following result summarises some known results about the lower central series of the braid groups of the sphere.

\begin{thm}[\cite{GGsph}]\label{th:gam3sph}\mbox{}
\begin{enumerate}
\item\label{it:gam1sp}  $\Gamma_1(B_n(\SP))/\Gamma_2(B_n(\SP)) \cong \Z_{2(n-1)}$ for $n\ge 2$. 
\item\label{it:gam2sp}  $\Gamma_2(B_n(\SP)))=\Gamma_3(B_n(\SP))$ for $n\ge 2$. 
\item\label{it:gam3sp}  $\Gamma_2(B_n(\SP)$ is perfect if and only if $n\ge 5$.
\end{enumerate}
\end{thm}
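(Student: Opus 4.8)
The plan is to obtain all three statements from the structure of $B_n(\SP)$ as a quotient of $B_n$, together with what is already known about $\Gamma_2(B_n)$. Recall from \repr{braidlcs} that for $n\geq 3$ one has $\Gamma_1(B_n)/\Gamma_2(B_n)\cong\Z$ (generated by the image of any $\sigma_i$) and $\Gamma_2(B_n)=\Gamma_3(B_n)$. Since $B_n(\SP)$ is the quotient of $B_n$ by the normal closure of the single relator $r=\sigma_1\cdots\sigma_{n-2}\sigma_{n-1}^2\sigma_{n-2}\cdots\sigma_1$ of \reqref{totsph}, the lower central quotients of $B_n(\SP)$ are governed by the image of $r$ in the associated graded of $B_n$. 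For part~\ref{it:gam1sp}, I would abelianise the presentation of $B_n(\SP)$: all $\sigma_i$ become a single generator $t$, and the relator $r$ becomes $t^{2(n-1)}$, giving $(B_n(\SP))_{\mathrm{Ab}}\cong\Z_{2(n-1)}$ directly. For $n=2$ this is $\Z_2$, consistent with $B_2(\SP)\cong\Z_2$.

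For part~\ref{it:gam2sp}, the key observation is that the relator $r$ is a product of $\sigma_i$'s, hence lies in $\Gamma_2(B_n)$ modulo... more precisely, $r$ maps to $2(n-1)$ times the generator in $\Gamma_1(B_n)/\Gamma_2(B_n)\cong\Z$, so $r\notin\Gamma_2(B_n)$ but the normal closure $N=\langle\!\langle r\rangle\!\rangle$ satisfies $N\Gamma_2(B_n)/\Gamma_2(B_n)\cong 2(n-1)\Z$. Then $\Gamma_2(B_n(\SP))=\Gamma_2(B_n)/(N\cap\Gamma_2(B_n))$ and $\Gamma_3(B_n(\SP))=\Gamma_3(B_n)\cdot(\text{stuff})/(\cdots)$; using $\Gamma_2(B_n)=\Gamma_3(B_n)$ and the fact that $N$ is generated as a normal subgroup by $r$, whose class lies in degree $1$ of the graded Lie algebra, I would argue that passing to the quotient does not create a new contribution in degree $2$ beyond what is already there, i.e.\ $\Gamma_2(B_n(\SP))=\Gamma_3(B_n(\SP))$. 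Concretely: $\Gamma_2(B_n(\SP))/\Gamma_3(B_n(\SP))$ is a quotient of $\Gamma_2(B_n)/\Gamma_3(B_n)=0$, once one checks that the only new relations introduced in degree $2$ come from commutators $[x,r]$ with $r$ congruent to a power of $\sigma_1$ modulo $\Gamma_2$, which already lie in $\Gamma_3(B_n)$.

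For part~\ref{it:gam3sp}, I would combine the cited computations of \cite{GGsph} on the lower central series of $B_n(\SP)$ with the known fact (from \cite{GGsph} or the structure of $P_n(\SP)$, which for $n\geq 4$ contains the relevant perfect subgroups) that $\Gamma_2(B_n(\SP))$ contains a finite-index copy of a perfect group precisely when $n\geq 5$. For $n=2,3$ the group $B_n(\SP)$ is finite ($\Z_2$ and $\Z_3\rtimes\Z_4$), so $\Gamma_2$ is a finite solvable group and cannot be perfect unless trivial; one checks $\Gamma_2(B_3(\SP))\cong\Z_3$, which is non-trivial and abelian, hence not perfect. For $n=4$, using the known structure of $B_4(\SP)$ (for instance that $P_4(\SP)$ is, up to a finite normal subgroup, a free group of rank $2$, cf.\ \cite{GGsph}), $\Gamma_2(B_4(\SP))$ surjects onto a non-trivial free abelian group, so is not perfect; while for $n\geq 5$ one invokes the corresponding assertion of \cite{GGsph} that $\Gamma_2(B_n(\SP))=\Gamma_3(B_n(\SP))$ is perfect.

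The main obstacle I anticipate is part~\ref{it:gam3sp} for the borderline value $n=4$: distinguishing $B_4(\SP)$ from the cases $n\geq 5$ requires actually identifying enough of the structure of $\Gamma_2(B_4(\SP))$ (or its abelianisation) to exhibit a non-trivial abelian quotient, and this does not follow formally from parts~\ref{it:gam1sp} and~\ref{it:gam2sp}; it relies genuinely on the explicit computations of \cite{GGsph}. The rest is essentially bookkeeping with the filtration of $B_n$ and the single surface relator.
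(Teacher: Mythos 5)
First, note that the paper does not actually prove \reth{gam3sph}: it is quoted from \cite{GGsph}, and the sentence following the statement points to Proposition~2.1 and Theorems~1.3 and~1.4 of that reference for parts~\ref{it:gam1sp}, \ref{it:gam2sp} and~\ref{it:gam3sp} respectively, so there is no internal proof to compare yours with. Your treatment of parts~\ref{it:gam1sp} and~\ref{it:gam2sp} is correct and in fact more self-contained than the paper's citation. For~\ref{it:gam1sp}, abelianising the presentation obtained by adding relation~\reqref{totsph} to the Artin presentation gives $\Z_{2(n-1)}$ exactly as you say. For~\ref{it:gam2sp} you can discard all the bookkeeping with the normal closure of the relator and the graded Lie algebra: a surjective homomorphism $\phi\colon G\to Q$ satisfies $\phi(\Gamma_i(G))=\Gamma_i(Q)$ for every $i$, so the equality $\Gamma_2(B_n)=\Gamma_3(B_n)$ of \repr{braidlcs} passes directly to the quotient $B_n(\SP)$ for $n\ge 3$, and the case $n=2$ is trivial because $B_2(\SP)\cong\Z_2$. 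In particular the caveat you add about checking that no ``new relations in degree $2$'' are created is unnecessary; had it been needed, your sketch would not have established it.

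The genuine gap is the case $n=4$ of part~\ref{it:gam3sp}. From the premise you state, namely that $P_4(\SP)$ is a free group of rank $2$ up to a finite normal subgroup, it does not follow that $\Gamma_2(B_4(\SP))$ surjects onto a non-trivial free abelian group: a virtually free group can be perfect (for instance $A_5\ast A_5$), so this step silently re-imports the explicit computations of \cite{GGsph} that you were trying to bypass. There is, however, a cheap fix, which is exactly the trick used in \repr{perfect} of the paper: the permutation homomorphism $B_4(\SP)\to S_4$ of~\reqref{permutation} is surjective, hence carries $\Gamma_2(B_4(\SP))$ onto $\Gamma_2(S_4)=A_4$, and $A_4$ is not perfect since its abelianisation is $\Z_3$; the same argument disposes of $n=3$ without computing $\Gamma_2(B_3(\SP))\cong\Z_3$ explicitly. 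The direction $n\ge 5$ genuinely requires \cite{GGsph}, as you acknowledge, and this is on a par with what the paper itself does. Finally, a small remark on the statement rather than on your argument: for $n=2$ one has $\Gamma_2(B_2(\SP))=\brak{1}$, which is perfect under the usual convention, so the ``only if'' in part~\ref{it:gam3sp} should really be read for $n\ge 3$.
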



The proofs of parts~\ref{it:gam1sp},~\ref{it:gam2sp} and~\ref{it:gam3sp} of \reth{gam3sph} may be found in Proposition~2.1, and Theorems~1.3 and~1.4 respectively of \cite{GGsph}. We now prove \reth{LinSurfcomb} in the case of the sphere and \reth{gensurj}\ref{it:main1a}\ref{it:main1ai}.



\begin{proof}[Proof of \reth{LinSurfcomb} in the case where $\Sigma=\Sigma_{0}$]
Let $n>m\ge 3$ and $n\neq 4$, and suppose that there exists a surjective homomorphism $\Phi\colon\thinspace B_n(\SP) \to S_m$. Since $B_n(\SP)$ is a quotient of $B_n$, $B_{n}$ surjects homomorphically onto $B_n(\SP)$, so its composition with $\Phi$ would gives rise to a surjective homomorphism from $B_n$ to $S_m$, which contradicts \reth{Lin}.
\end{proof}


\begin{proof}[Proof of \reth{gensurj}\ref{it:main1a}\ref{it:main1ai}]
We start by showing that the condition for the existence of a surjective homomorphism from $B_{n}(\SP)$ to $B_{m}(\SP)$ is sufficient. Since $B_{1}(\SP)$ is trivial, the result is clear if $m=1$, and if $n\geq 3$ and $m=2$, $B_n(\SP)$ surjects homomorphically onto $B_2(\SP)$ since $(B_{n}(\SP))_{\text{Ab}}\cong \Z_{2(n-1)}$ and $B_2(\SP)\cong \Z_2$, so there exists a surjective homomorphism from $B_{n}(\SP)$ to $B_{2}(\SP)$ that factors through $(B_{n}(\SP))_{\text{Ab}}$. To show that the condition is necessary, we consider the following two cases.
\begin{enumerate}[label=\textit{(\arabic*)}]
\item  Suppose that $n<m$. Since $B_{1}(\SP)$ is trivial and $(B_{n}(\SP))_{\text{Ab}}\cong \Z_{2(n-1)}$ for all $n\geq 2$, there does not exist a surjective homomorphism between $(B_{n}(\SP))_{\text{Ab}}$ and $(B_{m}(\SP))_{\text{Ab}}$, so there cannot exist a surjective homomorphism between $B_{n}(\SP)$ and $B_{m}(\SP)$.

\item Now let $n>m\ge 3$. If $n\neq 4$, the fact that there does not exist a surjective homomorphism from $B_n(\Sigma_g)$ onto $S_m$ by \reth{LinSurfcomb} implies that there does not exist a surjective homomorphism from $B_n(\SP)$ onto $B_m(\SP)$.
The remaining case, $(n,m)=(4,3)$, may be dealt with by studying the finite subgroups of the braid groups of $\SP$ as follows. Let $\Phi\colon\thinspace B_{4}(\SP) \to B_{3}(\SP)$ be a homomorphism, and using the notation of \reth{presbng} in $B_{4}(\SP)$, let $\alpha_{0}=\sigma_{1}\sigma_{2}\sigma_{3}$ and $\alpha_{1}= \sigma_{1}\sigma_{2} \sigma_{3}^{2}$, and let $\Delta_{4}=\sigma_{1}\sigma_{2} \sigma_{3}\sigma_{1} \sigma_{2}\sigma_{1}$ be the half-twist braid. By~\cite[Theorem~3]{GGjktr}, $B_{4}(\SP)=\langle \alpha_{0}, \alpha_{1} \rangle$. Now $\alpha_{0}$ is of order $8$, and the maximal torsion of $B_{3}(\SP)$ is equal to $6$, so the order of $\Phi(\alpha_{0})$ is a divisor of $4$~\cite{GVB,M}. But the full-twist braid $\Delta_{4}^{2}$ is the unique element of $B_{4}(\SP)$ of order $2$~\cite{GVB}. This implies that $\Delta_{4}^{2}$ belongs to the centre of $B_{4}(\SP)$, and also that $\alpha_{0}^{4}=\Delta_{4}^{2}$, from which we conclude that $\Delta_{4}^{2}$ belongs to $\ker{\Phi}$. 
Let $H=\langle \alpha_{0}, \Delta_{4} \rangle$. By~\cite[Remark, p.~234]{GGblms}, $H$ is isomorphic to the generalised quaternion group $\mathcal{Q}_{16}$ of order $16$, where the relations are of the form $\alpha_{0}^{4}=\Delta_{4}^{2}$ and $\Delta_{4}\alpha_{0} \Delta_{4}^{-1}=\alpha_{0}^{-1}$. Consider the restriction $\Phi\vert_{H}\colon\thinspace H \to \im{\Phi\vert_{H}}$. Since $\Delta_{4}^{2}$ belongs to $H\cap \ker{\Phi\vert_{H}}$ and to the centre of $B_{4}(\SP)$, we see that $\Phi\vert_{H}$ factors through the quotient $H/\langle \Delta_{4}^{2}\rangle$. Using the relations of $H$ in terms of its generators, this quotient is isomorphic to the dihedral group of order $8$, and hence $\im{\Phi\vert_{H}}$ is a subgroup of $B_{3}(\SP)$ that is a quotient of this dihedral group. 
On the other hand, the quotients of dihedral groups are either dihedral, the trivial group, or cyclic of order $2$. Further, $B_{3}(\SP)\cong \Z_{3}\rtimes \Z_{4}$, the action being the non-trivial one~\cite{FV}, so $B_{3}(\SP)$ has no dihedral subgroups. We conclude that $\im{\Phi\vert_{H}}\subset \langle \Delta_{3}^{2} \rangle$. Hence $\ker{\Phi\vert_{H}}$ is either equal to $H$, or is a subgroup of $H$ of index $2$. If $\ker{\Phi\vert_{H}}$ is of index $2$ in $H$, then by analysing the images of $\alpha_{0}$ and $\Delta_{4}$ by a surjective homomorphism from $H$ to $\Z_{2}$, we see that $\ker{\Phi\vert_{H}}$ is equal to $\ang{\alpha_{0}}$, to $\ang{\alpha_{0}^{2}, \Delta_{4}}$, or to $\ang{\alpha_{0}^{2}, \alpha_{0}\Delta_{4}}$. So if either $\ker{\Phi\vert_{H}}$ is equal to $H$, or is a subgroup of $H$ of index $2$, we conclude from these possibilities that $\alpha_{0}^{2} \in \ker{\Phi}$. It follows again from the fact that $\Delta_{3}^{2}$ is the unique element of $B_{3}(\SP)$ of order $2$ that $\Phi(\alpha_{0})\in \ang{\Delta_{3}^{2}}$, and so is central in $B_{3}(\SP)$. Since $B_{4}(\SP)=\langle \alpha_{0}, \alpha_{1} \rangle$, we conclude that $\im{\Phi}$ is cyclic, and hence $\Phi$ cannot be surjective.\qedhere
\end{enumerate}
\end{proof}
%

\begin{rem}
It follows from \reth{gensurj}\ref{it:main1a}\ref{it:main1ai} that there is no surjective homomorphism from $ B_4(\SP)$ to $B_3(\SP)$. However, the maps from $B_4(\SP)$ to $S_3$ defined by sending the generators $\sigma_1$ and $\sigma_3$ to $(1,2)$ and $\sigma_2$ to $(2,3)$ and from $B_{4}$ to $B_{3}$ defined by sending the generators $\sigma_1$ and $\sigma_3$ to $\sigma_{1}$ and $\sigma_2$ to $\sigma_{2}$, extend to well-defined, surjective homomorphisms. 
\end{rem}


\section{Surjections between braid groups of orientable surfaces with boundary}\label{sec:orboundary}

Let $\Sigma_{g,b}$ be a compact, connected orientable surface of genus $g$ with $b\geq 0$ boundary
components. A presentation for $B_n(\Sigma_{g,b})$ may be found in~\cite[Proposition~3.1]{BGoG}, and in the case $b=1$, a presentation for $B_n(\Sigma_{g,1})$ may be obtained from that of $B_n(\Sigma_{g})$ given in \reth{presbng} by deleting relation~(\ref{eq:tot}). The case $b=0$ was dealt with in \resec{closed}, so we shall assume henceforth that $b\geq 1$. The following two results generalise those of \reth{gam3closed} to the braid groups of $\Sigma_{g,b}$.

\begin{thm}[{\cite[Theorem~2]{BGeG}}]\label{th:gam3open}
Let $g,b\geq 1$, and let $n\geq 3$. 
Then:
\begin{enumerate}
\item\label{it:g1sigma} $\Gamma_1(B_n(\Sigma_{g,b}))/ \Gamma_2(B_n(\Sigma_{g,b})) \cong\Z^{2g+b-1} \oplus \Z_2$.
\item\label{it:g2sigma} $\Gamma_2(B_n(\Sigma_{g,b}))/ \Gamma_3(B_n(\Sigma_{g,b}))\cong \Z$.
\item\label{it:g3sigma} $\Gamma_3(B_n(\Sigma_{g,b}))= \Gamma_4(B_n(\Sigma_{g,b}))$. Moreover
$\Gamma_3(B_n(\Sigma_{g,b}))$ is perfect for $n\ge 5$.
\item\label{it:resid} $B_n(\Sigma_{g,b})$ is not residually nilpotent.
\end{enumerate}
\end{thm}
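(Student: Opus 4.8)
The plan is to prove the four parts by reducing as much as possible to the corresponding facts about $B_n(\Sigma_g)$ (\reth{gam3closed}), exploiting the fact that, for $b=1$, the presentation of $B_n(\Sigma_{g,1})$ is that of \reth{presbng} with relation~\reqref{tot} deleted, and for general $b\geq 1$ the presentation of~\cite[Proposition~3.1]{BGoG} adds $b-1$ further free ``boundary'' generators that commute with all the $\sigma_i$. First I would treat part~\ref{it:g1sigma}: abelianising the presentation, the braid relations force $\sigma_1=\cdots=\sigma_{n-1}$ (call the common image $\sigma$) and the relations~\reqref{cici} make $2[c_i,\sigma]=0$ collapse to $2\sigma$ being trivial modulo the contributions of the $c_i$; more precisely, \reqref{cs} kills all interaction of the $a_i,b_i$ with $\sigma$ in the abelianisation, relation~\reqref{cici} gives $2\sigma=0$, relation~\reqref{ab} gives no new relation once abelianised (it becomes $a_i+b_i=b_i+a_i+2\sigma$, i.e.\ again $2\sigma=0$), and since relation~\reqref{tot} is absent there is no relation forcing a nontrivial combination of the $a_i,b_i$; hence $(B_n(\Sigma_{g,b}))_{\mathrm{Ab}}\cong \Z^{2g}\oplus\Z^{b-1}\oplus\Z_2$, with the $\Z^{b-1}$ coming from the extra boundary generators, giving \ref{it:g1sigma}.

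For part~\ref{it:g2sigma}, the cleanest route is to compute $\Gamma_2/\Gamma_3$ directly in $B_n(\Sigma_{g,b})/\Gamma_3$, as in the proof of \repr{pre}: $\Gamma_2/\Gamma_3$ is abelian, generated by the commutators of the generators. Using the Witt--Hall identities and the relations, I expect \reqref{cicj} and \reqref{cs} to kill all commutators $[c_i,c_j]$, $[c_i,d]$ (boundary generators), and $[c_i,\sigma]$ modulo $2$-torsion, relation~\reqref{cici} shows $[c_i,\sigma]$ has order dividing $2$ while relation~\reqref{ab} identifies $[b_i,a_i]$ with $\sigma^2$ — but now, crucially, with relation~\reqref{tot} deleted there is no relation of the form $\prod[a_i^{-1},b_i]=(\text{word in }\sigma)$, so the order of $\sigma^2$ in $\Gamma_2/\Gamma_3$ is not constrained to be finite; instead one shows it is infinite cyclic, e.g.\ by exhibiting the quotient map to $\Z$ that survives (this is exactly where the difference with the closed case, where $\Gamma_2/\Gamma_3\cong\Z_{n-1+g}$, appears). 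I would complement the upper bound ``$\Gamma_2/\Gamma_3$ is cyclic'' with a lower bound ``$\Gamma_2/\Gamma_3$ surjects onto $\Z$'': the natural homomorphism $B_n(\Sigma_{g,b})\to B_n(\Sigma_{g,b})/\langle\!\langle a_i,b_i,\text{boundary gens}\rangle\!\rangle\cong B_n$, followed by the map $B_n\to$ something — actually more directly, the surface has a boundary, so $B_n(\Sigma_{g,b})$ retracts onto $B_n(\Sigma_{0,1})=B_n$ in a way that does not kill $\Gamma_2$; combining with \repr{braidlcs} gives $\Gamma_2(B_n)/\Gamma_3(B_n)=0$, which is not enough, so instead I would use the homomorphism onto $B_n(\Sigma_{g,1})/(\text{relations among }a_i,b_i)$ constructed to detect $\sigma^2$ in $\Z$. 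The existence of such a detecting quotient is already implicit in~\cite{BGeG}, and I would simply cite the relevant computation there, noting that the only change from the closed case is the deletion of~\reqref{tot}, which replaces the torsion $\Z_{n-1+g}$ by its ``untwisted'' version $\Z$.

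Part~\ref{it:g3sigma}: that $\Gamma_3=\Gamma_4$ follows once we know a presentation of $B_n(\Sigma_{g,b})/\Gamma_3$ (obtained from the known presentation of $B_n(\Sigma_g)/\Gamma_3$ in~\cite[eq.~(10)]{BGeG} by deleting the image of~\reqref{tot} and adding the central boundary generators) and check directly that it is nilpotent of class $2$ after one more step, i.e.\ that $[\Gamma_2,\,G]=\Gamma_2$ already holds — equivalently that every generating commutator of $\Gamma_2$ lies in $[\Gamma_2,G]$; the argument is the same as for the closed surface with $n\geq 3$ in \reth{gam3closed}\ref{it:gam3g}, and the extra boundary generators, being central, cause no trouble. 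For the perfectness of $\Gamma_3$ when $n\geq 5$ I would again quote the corresponding statement and proof for the closed case from~\cite[Theorem~2]{BGeG}. Part~\ref{it:resid}: $B_n(\Sigma_{g,b})$ is \emph{not} residually nilpotent because by parts~\ref{it:g2sigma} and~\ref{it:g3sigma}, $\Gamma_3=\Gamma_4=\cdots$, so $\bigcap_i\Gamma_i=\Gamma_3$, and it remains to observe that $\Gamma_3\neq\{1\}$ — indeed $\Gamma_2/\Gamma_3\cong\Z\neq 0$ is nontrivial and more is true, $\Gamma_3$ itself is a nontrivial (for $n\geq 5$, perfect and hence nonabelian) group, which one sees from the explicit presentation or from the fact that $B_n(\Sigma_{g,b})$ contains $B_n$ for which already $\Gamma_3=\Gamma_2\neq 1$ when $n\geq 3$. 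Hence the lower central series stabilises at a nontrivial term, which is exactly the assertion.

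The main obstacle I anticipate is part~\ref{it:g2sigma}, specifically getting the isomorphism ``on the nose'' to $\Z$ rather than merely ``an infinite cyclic group or a quotient thereof'': the upper bound (cyclic, generated by $[\sigma_1,\sigma_2]$ or by $\sigma_1^2$ modulo $\Gamma_3$) is a routine Witt--Hall computation exactly parallel to \repr{pre}, but the lower bound — exhibiting a homomorphism from $B_n(\Sigma_{g,b})$ to a nilpotent group of class $2$ whose $\Gamma_2/\Gamma_3$ is infinite cyclic and on which $\sigma_1^2$ maps to a generator — requires actually producing such a target. I expect the construction in~\cite{BGeG} already does this (it must, to get the $n=1$ and general statements), so in practice this step is a careful bookkeeping of which relations survive the deletion of~\reqref{tot}, rather than new mathematics; the key structural point, worth emphasising, is simply that relation~\reqref{tot} is the \emph{only} place where a product of $[a_i^{-1},b_i]$'s is set equal to a word in the $\sigma_i$, so deleting it is precisely what frees $\sigma_1^2$ from torsion in $\Gamma_2/\Gamma_3$.
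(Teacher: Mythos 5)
First, a point of comparison: the paper does not prove this theorem at all --- it is quoted verbatim as \cite[Theorem~2]{BGeG} --- so the ``official'' proof is a citation, and your overall strategy (rerun the closed-surface arguments with relation~\reqref{tot} deleted, keeping track of the extra boundary generators) is the right spirit and is essentially what the cited source does. Your parts~\ref{it:g1sigma} and~\ref{it:resid} are sound in outline, modulo a small slip in~\ref{it:g1sigma}: abelianised, relation~\reqref{cici} is vacuous, and it is relation~\reqref{ab} that yields $2\sigma=0$; and in~\ref{it:resid} the non-triviality of $\Gamma_3$ is correctly extracted from the embedded copy of $B_n$, for which $\Gamma_3(B_n)=\Gamma_2(B_n)\neq 1$ when $n\geq 3$.

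If your sketch is to stand as a proof rather than a reduction-plus-citation, there are two genuine gaps. In part~\ref{it:g2sigma} your computation is modelled on \repr{pre}, which is the $n=2$ case: for $n\geq 3$ you must show that the classes $[c_i,\sigma_1]$ \emph{vanish} in $\Gamma_2/\Gamma_3$, not merely that they have order dividing~$2$; this follows from relations~\reqref{cs} together with $\sigma_1\equiv\sigma_j \bmod \Gamma_2(B_n(\Sigma_{g,b}))$, whence $[c_i,\sigma_1]\equiv[c_i,\sigma_j]=1 \bmod \Gamma_3$ --- without this you only get ``cyclic plus $2$-torsion'' and cannot conclude $\cong\Z$. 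More importantly, the lower bound, namely that $\sigma_1^2$ has infinite order modulo $\Gamma_3$, is the real content of~\ref{it:g2sigma}, and you never produce the detecting quotient (your first candidate, retracting onto $B_n$, fails precisely because $\Gamma_2(B_n)=\Gamma_3(B_n)$). A concrete detector for $b=1$ (which suffices for all $b$, since capping off the remaining boundary components gives a surjection onto $B_n(\Sigma_{g,1})$ preserving $\sigma_1$) is the class-$2$ group generated by $a_1,b_1,\ldots,a_g,b_g,\sigma$ in which all pairs of generators commute except $[a_i,b_i]=\sigma^2$, i.e.\ the presentation of \rex{exo1} with the torsion relation on $\sigma$ removed; sending $\sigma_i\mapsto\sigma$, $a_i\mapsto a_i$, $b_i\mapsto b_i$ respects all relations exactly because \reqref{tot} is absent, and $\sigma^2$ has infinite order there. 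Secondly, in part~\ref{it:g3sigma} the criterion you state is wrong as written: $[\Gamma_2,G]=\Gamma_2$ would say $\Gamma_2=\Gamma_3$, contradicting part~\ref{it:g2sigma}, and no presentation of $B_n(\Sigma_{g,b})/\Gamma_3$ can decide whether $\Gamma_3=\Gamma_4$, since that quotient retains no information about $\Gamma_3$. What is required is a computation in the group itself: since $\Gamma_2$ is generated by $\sigma_1^2$ together with $\Gamma_3$, the quotient $\Gamma_3/\Gamma_4$ is generated by the classes of $[\sigma_1^2,g]$ for $g$ running through the generators, and one must verify that each of these lies in $\Gamma_4$; this, like the perfectness statement for $n\geq 5$, you ultimately just cite from~\cite{BGeG} --- which is defensible only because the paper under review does exactly the same.
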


The following proposition treats the case $n=2$.
 
\begin{prop} \label{prop:n=2orientablebound} 
Let $g,b\ge 1$. 
\begin{enumerate}
\item\label{it:resboundary1} The group $B_2(\Sigma_{g,b})$ is residually $2$-finite and therefore residually nilpotent, but is not nilpotent.
\item\label{it:resboundary12} $\Gamma_1(B_2(\Sigma_{g,b}))/ \Gamma_2(B_2(\Sigma_{g,b})) \cong\Z^{2g+b-1} \oplus \Z_2$.
\item\label{it:resboundary2} The group $\Gamma_2(B_2(\Sigma_{g,b}))/\Gamma_3(B_2(\Sigma_{g,b}))$ is a non-trivial quotient of $\Z_2^{2g+b-1} \oplus \Z$.
\end{enumerate}
\end{prop}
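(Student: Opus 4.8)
The proof proceeds along the same lines as the proofs of \repr{n=2orientable} and \repr{pre}. For part~\ref{it:resboundary1}, the group $B_2(\Sigma_{g,b})$ embeds (for $b\ge 1$) in $B_2(\Sigma_{g,b'})$ for suitable $b'$, or one can invoke directly the general result of~\cite{BB} (\emph{cf.} \repr{n=2orientable}): since $\Sigma_{g,b}$ has non-empty boundary, its surface group is free, and the argument of~\cite[Corollary~10]{BB} applies, giving that $B_2(\Sigma_{g,b})$ is residually $2$-finite, hence residually nilpotent. That it is not nilpotent follows as in \rerem{notnilp}: $B_2(\Sigma_{g,b})$ is non-Abelian by part~\ref{it:resboundary12}, and its centre is trivial (the centre of surface braid groups with boundary is trivial, \emph{cf.}~\cite{PR}), so a minimal-index argument identical to that of \rerem{notnilp} rules out $\Gamma_i(B_2(\Sigma_{g,b}))=\brak{1}$ for any $i$.

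For part~\ref{it:resboundary12}, I would read off the Abelianisation directly from the presentation of $B_n(\Sigma_{g,1})$ (obtained from \reth{presbng} by deleting relation~(\ref{eq:tot})), more generally the presentation of~\cite[Proposition~3.1]{BGoG} for $B_2(\Sigma_{g,b})$: abelianising, the generators $a_1,b_1,\ldots,a_g,b_g$ and the $b-1$ extra boundary generators each contribute a free $\Z$ summand, while the single braid generator $\sigma_1$ contributes a $\Z_2$ because relation~(\ref{eq:cici}) forces $\sigma_1^2$ into the commutator subgroup; this yields $\Z^{2g+b-1}\oplus\Z_2$. This is essentially \reth{gam3open}\ref{it:g1sigma} with $n=2$, and the computation is the same.

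For part~\ref{it:resboundary2}, I would mimic the proof of \repr{pre} verbatim, working modulo $\Gamma_3(B_2(\Sigma_{g,b}))$. Relation~(\ref{eq:cicj}) (and its analogues for the boundary generators) forces all the $[c_i,c_j]$ to vanish in the quotient; relation~(\ref{eq:cici}) forces $[c_i,\sigma_1]^2=1$; and relation~(\ref{eq:ab}) forces $[b_i,a_i]=\sigma_1^2$ (up to sign) in the quotient. The key difference with the closed case is that \emph{there is no relation~(\ref{eq:tot})}, so $\sigma_1^2$ is not constrained to have order dividing $g+1$ — it may be of infinite order. Hence $\Gamma_2(B_2(\Sigma_{g,b}))/\Gamma_3(B_2(\Sigma_{g,b}))$ is Abelian, generated by the $2g+b-1$ commutators $[c_i,\sigma_1]$ (of order at most $2$) together with $\sigma_1^2$ (of infinite a priori order), hence is a quotient of $\Z_2^{2g+b-1}\oplus\Z$. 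Non-triviality follows from part~\ref{it:resboundary1}: since $B_2(\Sigma_{g,b})$ is residually nilpotent and not nilpotent, all its lower central quotients are non-trivial, in particular $\Gamma_2/\Gamma_3$.

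**Main obstacle.** The only delicate point is bookkeeping with the extra $b-1$ boundary generators, which do not appear in \reth{presbng}; one must check that the relations of~\cite[Proposition~3.1]{BGoG} involving them behave, modulo $\Gamma_3$, exactly like relations~(\ref{eq:cs}) and~(\ref{eq:cicj}) (commuting with $\sigma_1$, or contributing an order-$2$ commutator with $\sigma_1$), so that each such generator adds a single $\Z_2$ to the bound. Everything else is a routine transcription of \repr{pre} and \rerem{notnilp}.
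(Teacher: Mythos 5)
Your parts~\ref{it:resboundary12} and~\ref{it:resboundary2}, and the non-nilpotence argument in part~\ref{it:resboundary1}, coincide with the paper's proof: the paper also obtains the presentation of $B_2(\Sigma_{g,1})$ by deleting relation~\reqref{tot} from \reth{presbng}, repeats the computation of \repr{pre} verbatim (so that $\sigma_1^2$ is no longer forced to be of finite order, whence the $\Z$ factor), handles $b\ge 2$ by the same appeal to the presentation of~\cite[Proposition~3.1]{BGoG}, deduces the non-triviality of $\Gamma_2(B_2(\Sigma_{g,b}))/\Gamma_3(B_2(\Sigma_{g,b}))$ from part~\ref{it:resboundary1} exactly as you do, and rules out nilpotence via the triviality of the centre~\cite{GGmpcps,PR} by the same minimality argument as in \rerem{notnilp}.

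The one point where you diverge, and where your argument as written has a gap, is the residual $2$-finiteness in part~\ref{it:resboundary1}. An embedding of $B_2(\Sigma_{g,b})$ into $B_2(\Sigma_{g,b'})$ gains nothing, since the target is again a braid group of a surface with boundary, so this is circular; and the direct appeal to~\cite[Corollary~10]{BB} is not licit as stated, because that result (as used in \repr{n=2orientable}) concerns the closed surfaces $\Sigma_g$, and ``the argument applies because the surface group is free'' is asserted rather than proved. The paper argues differently: it takes the short exact sequence~\reqref{permutation} with $\Sigma=\Sigma_{g,b}$ and $n=2$, notes that $S_2\cong\Z_2$ and that $P_2(\Sigma_{g,b})$ is residually torsion-free nilpotent by~\cite[Theorem~4]{BGeG}, hence residually $2$-finite, and then applies~\cite[Lemma~1.5]{Gr} to conclude that $B_2(\Sigma_{g,b})$ is residually $2$-finite. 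Alternatively, your embedding idea can be repaired: embed $\Sigma_{g,b}$ in a closed orientable surface so that no complementary component is a disc, so that $B_2(\Sigma_{g,b})$ embeds in the braid group of that closed surface by~\cite{PR}; since residual $2$-finiteness is inherited by subgroups, \repr{n=2orientable} then gives the conclusion. Either of these should replace the sentence you wrote for this step.
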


Presentations for $B_n(\Sigma_{g,b})/ \Gamma_3(B_n(\Sigma_{g,b}))$ were exhibited in~\cite[eq.~(10)]{BGeG} for $b=1$, and in~\cite[Proposition~3.13]{BGoG} for $b\ge 1$.
 
\begin{proof}[Proof of \repr{n=2orientablebound}]
Let $n=2$, and consider the short exact sequence~(\ref{eq:permutation}), where we take $\Sigma= \Sigma_{g,b}$. 
Since $S_2\cong \Z_2$ and $P_2(\Sigma_{g,b})$ is residually torsion free nilpotent~\cite[Theorem~4]{BGeG}, and therefore $2$-finite, the hypotheses of~\cite[Lemma~1.5]{Gr} are fulfilled, so $B_2(\Sigma_{g,b})$ is residually nilpotent. To see that it is not nilpotent, suppose on the contrary that there exists $i\in \N$ such that $\Gamma_i(B_2(\Sigma_{g,b}))=\brak{1}$. Without loss of generality, we may suppose that $i$ is minimal with respect to this property. Since $B_2(\Sigma_{g,b})$ is non Abelian, it follows from \reth{gam3open}\ref{it:g1sigma} that $i\geq 3$. Now $\Gamma_{i}(B_2(\Sigma_{g,b}))= [\Gamma_{i-1}(B_2(\Sigma_{g,b})), B_2(\Sigma_{g,b})]=\brak{1}$, and hence $\Gamma_{i-1}(B_2(\Sigma_{g,b}))$ is contained in the centre of $B_2(\Sigma_{g,b})$. This centre is trivial~\cite{GGmpcps,PR}, so $\Gamma_{i-1}(B_2(\Sigma_{g,b}))=\brak{1}$, but this contradicts the minimality of $i$. Part~\ref{it:resboundary1} follows. 

For part~\ref{it:resboundary12}, we just give the proof in the case $b=1$. The general case may be obtained in a similar manner using the presentation of $B_2(\Sigma_{g,b})$ given in~\cite{BGoG}. As we mentioned above, a presentation of $B_2(\Sigma_{g,1})$ may be obtained by deleting relation~(\ref{eq:tot}) from the presentation of \reth{presbng}. Thus the proof given in \repr{pre} for $\Sigma_{g}$ is also valid in the case of $\Sigma_{g,b}$, except that we can no longer conclude that $\sigma_{1}^{2}$ is of finite order, so the second factor in the direct product decomposition of  $B_2(\Sigma_{g,1})/\Gamma_3(B_2(\Sigma_{g,1}))$ is $\Z$. Part~\ref{it:resboundary2} is a consequence of part~\ref{it:resboundary1}.
%
\end{proof}
 
\begin{rem}\label{rem:n1bound}
\reth{gam3open} (in the case $n\geq 3$) and \repr{n=2orientablebound} (in the case $n=2$)
generalise \reth{gam3closed}.  If $n=1$, $B_1(\Sigma_{g,b})$ is a free group of rank $2g+b-1$, and 
its lower central series is well known, see~\cite{LA} for instance. Note that in particular $B_1(\Sigma_{g,b})$ is residually nilpotent. It follows from \reth{gam3open}\ref{it:resid} and \repr{n=2orientablebound}\ref{it:resboundary1} that $B_n(\Sigma_{g,b})$ is residually nilpotent if and only if $n\le 2$. As in the proof of \repr{perfect}, we see that $\Gamma_3(B_n(\Sigma_{g,b}))$ is not perfect if $n\in \brak{3,4}$.
Hence using  \reth{gam3open}\ref{it:g3sigma}$,\Gamma_3(B_n(\Sigma_{g,b}))$ is perfect if and only if $n\ge 5$. 
\end{rem}

We now prove \reth{gensurj}\ref{it:main1b} in the orientable case and \reco{pureboundary}. We first require the following result.

\begin{lem}\label{lem:gb1} 
There is no surjective homomorphism from $B_2(\Sigma_{1,1})$ onto $\pi_1(\Sigma_{1,1})$.
\end{lem}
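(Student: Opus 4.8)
\textbf{Proof plan for \relem{gb1}.} The plan is to derive a contradiction by comparing an appropriate quotient of $B_2(\Sigma_{1,1})$ with $\pi_1(\Sigma_{1,1})$. Recall that $\Sigma_{1,1}$ is a one-holed torus, so $\pi_1(\Sigma_{1,1})$ is a free group of rank $2$, and in particular it is residually free and residually torsion-free nilpotent, with $\Gamma_2(\pi_1(\Sigma_{1,1}))/\Gamma_3(\pi_1(\Sigma_{1,1}))$ a free abelian group of rank $\binom{2}{2}=1$, i.e.\ infinite cyclic. A presentation of $B_2(\Sigma_{1,1})$ is obtained from \reth{presbng} with $g=1$, $n=2$ by deleting the surface relation~(\ref{eq:tot}); the generators are $a_1, b_1, \sigma_1$.

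First I would compute the relevant lower central series quotients of $B_2(\Sigma_{1,1})$, following exactly the computation in the proof of \repr{pre} (which, as noted in the proof of \repr{n=2orientablebound}\ref{it:resboundary12}, applies verbatim to the bounded case except that $\sigma_1^2$ need no longer have finite order). Relation~(\ref{eq:cici}) gives $[c_i,\sigma_1]^2 = 1$ in $B_2(\Sigma_{1,1})/\Gamma_3$, so the images of $[a_1,\sigma_1]$ and $[b_1,\sigma_1]$ have order at most $2$; relation~(\ref{eq:ab}) gives $[b_1,a_1]^{-1} = \sigma_1^2$ in $B_2(\Sigma_{1,1})/\Gamma_3$. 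Hence $\Gamma_2(B_2(\Sigma_{1,1}))/\Gamma_3(B_2(\Sigma_{1,1}))$ is abelian, generated by three elements, of which $[a_1,\sigma_1]$ and $[b_1,\sigma_1]$ are $2$-torsion and the remaining generator is $\sigma_1^2$; thus it is a quotient of $\Z_2^2 \oplus \Z$. The key structural point I would extract is that the $2$-torsion subgroup and the free part are ``detected'' by distinct commutators: modulo $2$-torsion, $\Gamma_2/\Gamma_3$ is cyclic, generated by the image of $\sigma_1^2$.

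Now suppose $\Phi\colon B_2(\Sigma_{1,1}) \to \pi_1(\Sigma_{1,1})$ is surjective. Then $\Phi$ induces a surjection on abelianisations; since $(B_2(\Sigma_{1,1}))_{\mathrm{Ab}} \cong \Z^2 \oplus \Z_2$ (by \repr{n=2orientablebound}\ref{it:resboundary12} with $b=1$) and $\pi_1(\Sigma_{1,1})_{\mathrm{Ab}} \cong \Z^2$, abelianisation-wise this is consistent, so I need the second quotient. $\Phi$ also induces a surjection $\overline{\Phi}\colon \Gamma_2(B_2(\Sigma_{1,1}))/\Gamma_3(B_2(\Sigma_{1,1})) \to \Gamma_2(\pi_1(\Sigma_{1,1}))/\Gamma_3(\pi_1(\Sigma_{1,1})) \cong \Z$. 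The source is a quotient of $\Z_2^2 \oplus \Z$, so its image in $\Z$ is a quotient of $\Z_2^2 \oplus \Z$; but the $2$-torsion part $[a_1,\sigma_1], [b_1,\sigma_1]$ must map to $0$ in $\Z$, so $\overline{\Phi}$ factors through the free part, which is cyclic generated by the image of $\sigma_1^2$. Therefore $\overline{\Phi}$ is surjective onto $\Z$ with cyclic source, which is fine so far --- so a crude rank count is \emph{not} enough, and this is where the real obstruction lies: I must show the map $\Z \to \Z$ cannot actually be onto, or find a finer invariant.

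The main obstacle, and the place I would concentrate the argument, is to rule out this last possibility. I expect the cleanest route is \textbf{not} through $\Gamma_2/\Gamma_3$ but through a mod-$2$ argument at a deeper level, or through the structure of the quotient $B_2(\Sigma_{1,1})/\Gamma_3(B_2(\Sigma_{1,1}))$ directly: since $\sigma_1^2 \in \Gamma_2$ and $\sigma_1 \notin \Gamma_2$ (as $\sigma_1$ is the nontrivial $\Z_2$-generator of the abelianisation), the element $\sigma_1$ has even order modulo any nilpotent quotient in a way that $\pi_1(\Sigma_{1,1})$, being torsion-free and residually torsion-free nilpotent, cannot match --- more precisely, in $B_2(\Sigma_{1,1})/\Gamma_3$ the element $\sigma_1$ maps to a nontrivial element whose square lies in $\Gamma_2/\Gamma_3$, while the generator of $\Gamma_2(\pi_1)/\Gamma_3(\pi_1) \cong \Z$ is primitive (not a proper power modulo $\Gamma_3$, since $\pi_1$ is free of rank $2$ and $[b,a]$ generates the free abelian $\Gamma_2/\Gamma_3$). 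So if $\overline{\Phi}$ were onto, the generator $[b,a]$ of $\Gamma_2(\pi_1)/\Gamma_3(\pi_1)$ would be hit by $\Phi(\sigma_1)^2 = \Phi(\sigma_1^2)$ modulo $\Gamma_3(\pi_1)$, forcing $[b,a]$ to be a square in $\pi_1(\Sigma_{1,1})/\Gamma_3(\pi_1(\Sigma_{1,1}))$ --- but $[b,a]$ generates the infinite cyclic group $\Gamma_2/\Gamma_3$ and the quotient $\pi_1/\Gamma_3$ has $\Gamma_2/\Gamma_3$ as a direct summand complemented by $\Z^2$ (the Heisenberg-type group), in which $[b,a]$ is not a square of any element (an element $x$ with $x^2 \in \Gamma_2$ lies in $\Gamma_2$, and squaring on the infinite cyclic $\Gamma_2/\Gamma_3$ is not surjective). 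This contradiction completes the proof. I would double-check the last assertion by an explicit matrix computation in the integral Heisenberg group $\pi_1(\Sigma_{1,1})/\Gamma_3(\pi_1(\Sigma_{1,1}))$.
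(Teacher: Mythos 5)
Your argument is correct, but it takes a genuinely different route from the paper's. The paper proves \relem{gb1} inside the free group itself: after the change of generators $\alpha=a_1\sigma_1$, $\beta=b_1\sigma_1$, the relations \reqref{cici} and \reqref{ab} of $B_2(\Sigma_{1,1})$ become $\alpha^2=\sigma_1\alpha^2\sigma_1^{-1}$, $\beta^2=\sigma_1\beta^2\sigma_1^{-1}$ and $\alpha\beta\sigma_1^{-1}=\sigma_1\beta\alpha$; uniqueness of square roots in a free group then forces $\phi(\sigma_1)$ to commute with $\phi(\alpha)$ and $\phi(\beta)$, hence $\phi(\sigma_1)=1$ since the centre of a free group of rank $2$ is trivial, and the last relation makes the image abelian, contradicting surjectivity. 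Your route instead works in the class-$2$ nilpotent quotients: as in \repr{pre} and \repr{n=2orientablebound}, $\Gamma_2(B_2(\Sigma_{1,1}))/\Gamma_3(B_2(\Sigma_{1,1}))$ is generated by the $2$-torsion classes $[a_1,\sigma_1]$, $[b_1,\sigma_1]$ together with the class of $\sigma_1^{2}$, so a surjection $\Phi$ would force the class of $\Phi(\sigma_1)^{2}$ to generate $\Gamma_2(\pi_1(\Sigma_{1,1}))/\Gamma_3(\pi_1(\Sigma_{1,1}))\cong\Z$; but in the integral Heisenberg group $\pi_1(\Sigma_{1,1})/\Gamma_3(\pi_1(\Sigma_{1,1}))$ an element whose square is central is itself central, and squaring is not surjective on the infinite cyclic centre, a contradiction. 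Both proofs are complete. The paper's is shorter and more elementary (no lower central series computation is needed, only square roots and triviality of the centre of $F_2$), whereas yours is in the spirit of the lower-central-series methods used elsewhere in the paper and in fact yields the formally stronger statement that $B_2(\Sigma_{1,1})$ does not even surject onto the Heisenberg quotient $\pi_1(\Sigma_{1,1})/\Gamma_3(\pi_1(\Sigma_{1,1}))$. One small correction: your parenthetical claim that $\pi_1/\Gamma_3$ has $\Gamma_2/\Gamma_3$ as a \emph{direct summand} complemented by $\Z^2$ is false --- the Heisenberg group is a non-split central extension of $\Z^2$ by $\Z$, not a direct product --- but this phrase is not actually used: the justification you give in brackets (if $x^2\in\Gamma_2/\Gamma_3$ then $x\in\Gamma_2/\Gamma_3$, and doubling is not onto $\Z$), which is immediate from the $3\times 3$ unitriangular matrix model, is the correct one, so simply delete the direct-summand phrase.
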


\begin{proof}
Suppose on the contrary that there exists a surjective homomorphism $\phi\colon\thinspace B_2(\Sigma_{1,1}) \to \pi_1(\Sigma_{1,1})$. Let $\alpha= a_1 \sigma_1$, $\beta = b_1 \sigma_1$. Then $\alpha, \beta, \sigma_1$  generate $B_2(\Sigma_{1,1})$, and the defining relations of \reth{presbng} become:
\begin{align}
\alpha^2 &= \sigma_1 \alpha^2  \sigma_1^{-1}  \label{eq:alpha}\\
\beta^2 &= \sigma_1 \beta^2  \sigma_1^{-1} \label{eq:beta}\\
\alpha \beta  \sigma_1^{-1} &= \sigma_1 \beta\alpha.\label{eq:abs}
\end{align}
Now $\pi_1(\Sigma_{1,1})$ is a free group of rank $2$, and so if $u,v \in \pi_1(\Sigma_{1,1})$, the relation $u^2=v^2$ implies that $u=v$. Applying this to relations~(\ref{eq:alpha}) and~(\ref{eq:beta}), we deduce that $\phi(\sigma_1)$ is central in $\pi_1(\Sigma_{1,1})$, and so $\phi(\sigma_1)=1$. Since $\phi$ is surjective, it follows that $\pi_1(\Sigma_{1,1})=\ang{\phi(\alpha), \phi(\beta)}$.  Relation~(\ref{eq:abs}) implies that $\phi(\alpha)$ and $\phi(\beta)$ commute. Consequently, $\ang{\phi(\alpha), \phi(\beta)}$ is cyclic, and this contradicts the assumption that $\phi$ is surjective.
\end{proof}

In contrast with the case of $\Sigma_{g}$, \reth{gam3open} implies that the lower central series does not distinguish the number of strings for braid groups of orientable surfaces with boundary if $n\geq 3$. Nevertheless, we are able to show that in certain cases, there does not exist a surjective homomorphism between $B_n(\Sigma_{g,b})$ and $B_m(\Sigma_{g,b})$.



\begin{proof}[Proof of \reth{gensurj}\ref{it:main1b} in the orientable case]\mbox{}
We consider in turn the three cases given in the statement. 
\begin{enumerate}[label=\textit{(\roman*)}]
\item Let $n<m$ and $n\in \brak{1,2}$. The arguments used in \repr{mingen} apply verbatim to the case with boundary. In particular $G(B_{1}(\Sigma_{g,b}))=2g+b-1$, $G(B_{2}(\Sigma_{g,b}))= 2g+b$, and $G(B_{m}(\Sigma_{g,b}))= 2g+b+1$ for all $m\ge 3$. It follows that there does not exist a surjective homomorphism in this case. 

\item Suppose that $n>m$ and $m\in \brak{1,2}$. First let $n\geq 3$. Then $B_m(\Sigma_{g,b})$ is residually nilpotent by \repr{n=2orientablebound} and \rerem{n1bound}. Since $B_n(\Sigma_{g,b})$ is not residually nilpotent by \reth{gam3open}\ref{it:resid}, it cannot surject homomorphically onto $B_m(\Sigma_{g,b})$. So suppose that $n=2$ and $m=1$. The case $(g,b) =(1,1)$ was dealt with in \relem{gb1}, so we may assume that $(g,b)\neq (1,1)$, in which case $2g+b\geq 4$. By \repr{n=2orientablebound}\ref{it:resboundary2}, the Abelian group $\Gamma_2(B_2(\Sigma_{g,b}))/\Gamma_3(B_2(\Sigma_{g,b}))$ is of rank at most $1$. On the other hand, $\Gamma_2(\pi_1(\Sigma_{g,b}))/\Gamma_3(\pi_1(\Sigma_{g,b}))$ is free Abelian of rank $(2g+b-1)(2g+b-2)/2$~\cite{LA}, and this rank is strictly greater than $1$. Thus $B_2(\Sigma_{g,b})$ cannot surject homomorphically onto $B_1(\Sigma_{g,b})$.

\item Suppose that $n>m\ge3$ and $n\neq 4$. Using the presentation of $B_m(\Sigma_{g,b})$ given in~\cite[Proposition~3.1]{BGoG}, the proof of \reth{LinSurfcomb} goes through in this case, the only difference being that $B_n(\Sigma_{g,b})/\langle\!\langle \sigma_1\rangle\!\rangle$ is isomorphic to $\Z^{2g+b-1}$. The result then follows by an argument similar to that given in case~\ref{it:ngrmc} of the proof of \reth{gensurj}\ref{it:main1a}\ref{it:main1aii} in \resec{suror}.\qedhere
\end{enumerate} 
\end{proof}

The following result is the analogue of \reco{pure} in the case where the surface has boundary.

\begin{cor}\label{cor:pureboundary} 
Let $g\geq 1$, and let $n,m\in \N$. Then there exists a surjective homomorphism of $B_n(\Sigma_{g,b})$ onto $P_m(\Sigma_{g,b})$ if and only if $n=m=1$.
\end{cor}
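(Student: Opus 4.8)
The plan is to mimic the structure of \reco{pure}, establishing necessity and sufficiency separately, and reducing the boundary case to the already-proved results. For \textbf{sufficiency}, if $n=m=1$ then $B_1(\Sigma_{g,b})=\pi_1(\Sigma_{g,b})=P_1(\Sigma_{g,b})$, so the identity homomorphism works. For \textbf{necessity}, suppose there is a surjective homomorphism $\Phi\colon\thinspace B_n(\Sigma_{g,b})\to P_m(\Sigma_{g,b})$. As in the proof of \reco{pure}, I would pass to Abelianisations: $\Phi$ induces a surjection $(B_n(\Sigma_{g,b}))_{\text{Ab}}\twoheadrightarrow (P_m(\Sigma_{g,b}))_{\text{Ab}}$. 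From the presentation of $P_m(\Sigma_{g,b})$ (cf.\ \cite{B}, or \cite{BGoG} for $b\ge 1$), the group $(P_m(\Sigma_{g,b}))_{\text{Ab}}$ is free abelian, of rank $m(2g+b-1)+\binom{m}{2}$ (the strings contribute $2g+b-1$ each, together with the $\binom{m}{2}$ classes of the generators $A_{i,j}$), hence torsion-free of rank strictly increasing in $m$. Meanwhile $(B_n(\Sigma_{g,b}))_{\text{Ab}}\cong \Z^{2g+b-1}\oplus\Z_2$ for $n\ge 2$ (\reth{gam3open}\ref{it:g1sigma}, \repr{n=2orientablebound}\ref{it:resboundary12}) and $\cong\Z^{2g+b-1}$ for $n=1$. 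Comparing free abelian ranks, a surjection forces $m(2g+b-1)+\binom{m}{2}\le 2g+b-1$, and since $2g+b-1\ge 1$ this immediately gives $m=1$.

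Once $m=1$ is known, $P_1(\Sigma_{g,b})=\pi_1(\Sigma_{g,b})$, so it remains to show that a surjection $B_n(\Sigma_{g,b})\twoheadrightarrow \pi_1(\Sigma_{g,b})$ forces $n=1$. Equivalently, I must rule out $n\ge 2$. For $n\ge 3$ this is immediate from the minimal-generation count already recorded in the proof of \reth{gensurj}\ref{it:main1b}(i): by \req{Abelian}, $G(B_n(\Sigma_{g,b}))\ge G(\pi_1(\Sigma_{g,b}))$ is no obstruction, so instead I would use residual nilpotence — $\pi_1(\Sigma_{g,b})$ is residually nilpotent (\rerem{n1bound}) but that does not obstruct either; the clean argument is the one used in case (iii): composing with the projection would not help here, so I use instead \reth{gam3open}\ref{it:g2sigma}, which gives $\Gamma_2(B_n(\Sigma_{g,b}))/\Gamma_3(B_n(\Sigma_{g,b}))\cong\Z$ for $n\ge 3$, while $\Gamma_2(\pi_1(\Sigma_{g,b}))/\Gamma_3(\pi_1(\Sigma_{g,b}))$ is free abelian of rank $\binom{2g+b-1}{2}$ (\cite{LA}), which is $\ge 1$ with equality only when $2g+b-1\le 2$; thus when $2g+b-1\ge 3$ the $n\ge 3$ case is excluded this way, and the remaining small-genus subcases $2g+b-1\in\{1,2\}$ must be handled by the residual-nilpotence dichotomy ($B_n(\Sigma_{g,b})$ is not residually nilpotent for $n\ge 3$ by \reth{gam3open}\ref{it:resid}, whereas every free group is). The case $n=2$ is the delicate one: I would argue exactly as in case (ii) of the proof of \reth{gensurj}\ref{it:main1b}, splitting according to whether $(g,b)=(1,1)$. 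If $(g,b)=(1,1)$, \relem{gb1} directly says there is no surjection $B_2(\Sigma_{1,1})\twoheadrightarrow\pi_1(\Sigma_{1,1})$. If $(g,b)\neq(1,1)$ then $2g+b\ge 4$, so by \repr{n=2orientablebound}\ref{it:resboundary2} the abelian group $\Gamma_2(B_2(\Sigma_{g,b}))/\Gamma_3(B_2(\Sigma_{g,b}))$ has free rank at most $1$, whereas $\Gamma_2(\pi_1(\Sigma_{g,b}))/\Gamma_3(\pi_1(\Sigma_{g,b}))$ is free abelian of rank $(2g+b-1)(2g+b-2)/2\ge 3$, so no surjection exists.

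Putting the pieces together: a surjection $B_n(\Sigma_{g,b})\twoheadrightarrow P_m(\Sigma_{g,b})$ forces $m=1$ by the abelianisation rank comparison, and then forces $n=1$ by the lower-central-series and residual-nilpotence arguments above; conversely $n=m=1$ works trivially. I expect the main obstacle to be pinning down the rank of $(P_m(\Sigma_{g,b}))_{\text{Ab}}$ correctly from the chosen presentation and, more substantively, the bookkeeping in the $n=2$ case, where one genuinely needs the separate input of \relem{gb1} for the exceptional surface $\Sigma_{1,1}$ and the partial computation of \repr{n=2orientablebound} for the rest — this is the only place where the general lower-central-series machinery is not by itself decisive.
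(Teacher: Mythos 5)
Your route is essentially the paper's: compare Abelianisations to force $m=1$, then reduce to the non-existence of a surjection from $B_n(\Sigma_{g,b})$ onto $B_1(\Sigma_{g,b})=\pi_1(\Sigma_{g,b})$ for $n\ge 2$; at this second step the paper simply invokes \reth{gensurj}\ref{it:main1b}\ref{it:main1bii}, and your re-derivation (residual nilpotence for $n\ge 3$ via \reth{gam3open}\ref{it:resid}, and for $n=2$ the dichotomy between \relem{gb1} when $(g,b)=(1,1)$ and the $\Gamma_2/\Gamma_3$ rank comparison via \repr{n=2orientablebound}\ref{it:resboundary2} otherwise) is exactly the paper's own proof of that case, so nothing is lost, though your extra detour through $\Gamma_2/\Gamma_3$ for $n\ge3$ is unnecessary since residual nilpotence settles all of $n\ge 3$ at once. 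The one genuine error is your formula for $(P_m(\Sigma_{g,b}))_{\text{Ab}}$: it is free Abelian of rank $(2g+b-1)m$, not $(2g+b-1)m+\binom{m}{2}$. For $g\ge 1$ the classes of the generators $A_{i,j}$ die in the Abelianisation: in the Fadell--Neuwirth splitting $P_2(\Sigma_{g,b})\cong \pi_1(\Sigma_{g,b}\setminus\{x_1\})\rtimes \pi_1(\Sigma_{g,b})$, the point-pushing action on first homology sends the class of $b_1$ (as a loop of the second point) to itself plus a sign times the class of the loop around $x_1$, i.e.\ $\pm[A_{12}]$, so $[A_{12}]$ vanishes in the coinvariants and hence in $H_1$; equivalently, the relations in the presentations of~\cite{B} kill these classes, which is why the paper states the rank $(2g+b-1)m$ (for $g=0$, by contrast, the $A_{i,j}$ do survive, which is where your $\binom{m}{2}$ intuition comes from). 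Fortunately your conclusion is unaffected: the correct inequality $m(2g+b-1)\le 2g+b-1$ already forces $m=1$, so the proof goes through once the rank is corrected.
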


\begin{proof}
Let $g\geq 1$. The proof is similar to that of \reco{pure}. If $n=m=1$, the result is clear, so suppose that there exists a surjective homomorphism $\Phi\colon\thinspace B_n(\Sigma_{g,b})\to P_m(\Sigma_{g,b})$, where $(n,m)\neq (1,1)$. Then $\Phi$ induces a surjective homomorphism of the corresponding Abelianisations, but since $(P_m(\Sigma_{g,b}))_{\text{Ab}}$ is isomorphic to $\Z^{(2g+b-1)m}$ using a presentation of $P_m(\Sigma_{g,b})$ (see~\cite{B} for instance), it follows from \reth{gam3open}\ref{it:g1sigma}, \repr{n=2orientablebound}\ref{it:resboundary12} and the fact that $(B_{1}(\Sigma_{g,b}))_{\text{Ab}}$ is isomorphic to $\Z^{2g+b-1}$ that $m=1$. By \reth{gensurj}\ref{it:main1b}\ref{it:main1bii}, we conclude that $n=1$.
\end{proof}

\section{Surjections between braid groups of non-orientable surfaces}\label{sec:nonor}

We start this section by recalling a presentation of the braid groups of compact, non-orientable surfaces without boundary.

\begin{thm}[\cite{B}]\label{th:presnor}
Let $g\geq 1$, let $n\geq 2$, and let $U_{g}$ be a compact, connected non-orientable surface without boundary of genus $g$. Then $B_n(U_g)$ admits the following group presentation:
\begin{itemize}
\item \textbf{generators:} $\rho_1,  \ldots, \rho_g, \sigma_1, \ldots, \sigma_{n-1}$.
\item \textbf{relations:}
\begin{gather}
\text{$\sigma_i\sigma_j=\sigma_j\sigma_i$ if $\lvert i-j \rvert \geq
2$}\label{eq:artin1or}\\
\text{$\sigma_i\sigma_{i+1}\sigma_i= \sigma_{i+1}\sigma_i \sigma_{i+1}$
for all $1\leq i\leq n-2$}\label{eq:artin2or}\\
\text{$\rho_i\sigma_j= \sigma_j \rho_i$ for all $j\geq 2$ and $i=1, \ldots, g$}\label{eq:rs1}\\
\text{$\rho_i \sigma_1 \rho_i \sigma_1= \sigma_1^{-1} \rho_i \sigma_1 \rho_i$   for
  $i=1, \ldots, g$}\label{eq:rr}\\
\text{$\rho_r \sigma_1^{-1}  \rho_s \sigma_1=\sigma_1^{-1} \rho_s \sigma_1 \rho_r$
  for $1\le s<r \le g$}\label{eq:rs2}\\
\text{$\prod_{i=1}^g \rho_i^{-2}= \sigma_1\cdots \sigma_{n-2} \sigma_{n-1}^2 \sigma_{n-2} \cdots \sigma_1$}\label{eq:totnor}.
\end{gather}
\end{itemize}
\end{thm}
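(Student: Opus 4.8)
The plan is to obtain the presentation in two stages, exactly as one proves the orientable analogue \reth{presbng}: first for the braid group of the non-orientable surface $U_{g,1}$ of genus $g$ with one boundary component, which should carry the presentation of the statement with relation~\eqref{eq:totnor} removed, and then for the closed surface $U_g$ by capping off the boundary, which will force~\eqref{eq:totnor} as the single additional relation. Throughout, the generators keep their geometric meaning: $\sigma_1,\dots,\sigma_{n-1}$ arise from an embedded disc $D^{2}$ containing the $n$ base points, so they satisfy the braid relations~\eqref{eq:artin1or}--\eqref{eq:artin2or}, while $\rho_1,\dots,\rho_g$ are the braids in which the first strand runs once along a standard generating loop of the surface group with the remaining strands held fixed. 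From this description, relations~\eqref{eq:rs1} and~\eqref{eq:rs2} merely say that braids supported on disjoint strands commute, and~\eqref{eq:rr} is the one genuinely non-orientable relation, encoding the orientation reversal incurred when the first strand is dragged across a crosscap; checking that all of~\eqref{eq:artin1or}--\eqref{eq:rs2} hold is then routine.

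\emph{Stage 1 (bounded case).} I would show that $B_n(U_{g,1})$ is presented by the generators above subject only to~\eqref{eq:artin1or}--\eqref{eq:rs2}. This is consistent, since $\pi_1(U_{g,1})$ is free of rank $g$, so no surface relation should occur. Completeness I would prove by induction on $n$: the base case $n=1$ is immediate, as $B_1(U_{g,1})=\pi_1(U_{g,1})$ is free on $\rho_1,\dots,\rho_g$. For the inductive step, since $U_{g,1}$ has non-empty boundary the Fadell--Neuwirth fibration $\FF_n(U_{g,1})\to\FF_{n-1}(U_{g,1})$ admits a section, giving a \emph{split} short exact sequence $1\to F\to P_n(U_{g,1})\to P_{n-1}(U_{g,1})\to 1$ with $F$ free of finite rank. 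One thus obtains a presentation of $P_n(U_{g,1})$ from one of $P_{n-1}(U_{g,1})$ via the standard presentation of a semidirect product, and a Reidemeister--Schreier rewriting along $1\to P_n(U_{g,1})\to B_n(U_{g,1})\to S_n\to 1$ then yields the presentation of $B_n(U_{g,1})$ in the desired form. An appealing variant is to induct on the genus instead, building $U_{g,1}$ from the disc by successively attaching twisted bands and tracking how configuration points may travel through each new band; this makes the appearance of $\rho_g$ and of relations~\eqref{eq:rr} and~\eqref{eq:rs2} especially transparent.

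\emph{Stage 2 (capping off).} Write $U_g=U_{g,1}\cup_{\partial}D^{2}$. The inclusion induces a homomorphism $\iota_{\ast}\colon\thinspace B_n(U_{g,1})\to B_n(U_g)$ that is surjective, because any braid in $U_g$ may be isotoped off the capping disc; by the standard fact relating the braid group of a closed surface to that of the surface with a disc removed, $\ker{\iota_{\ast}}$ is the normal closure of a single element $\tau$. Concretely, $\tau$ is the braid in which the first strand runs once along a curve parallel to $\partial U_{g,1}$ while the other $n-1$ strands remain inside that curve. Decomposing this boundary-parallel path exhibits $\tau$, up to inversion and the usual choice of orientations, as the product of the pure braid $\sigma_1\cdots\sigma_{n-2}\sigma_{n-1}^{2}\sigma_{n-2}\cdots\sigma_1$ (the first strand encircling strands $2,\dots,n$) with the word $\rho_1^{\pm2}\cdots\rho_g^{\pm2}$ (the first strand traversing the boundary word of $\pi_1(U_{g,1})$), so that adjoining the relation $\tau=1$ is precisely adjoining~\eqref{eq:totnor}. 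Since the claimed presentation of $B_n(U_g)$ is exactly that of Stage~1 together with~\eqref{eq:totnor}, this completes the argument.

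The main obstacle is the completeness statement of Stage~1, namely that~\eqref{eq:artin1or}--\eqref{eq:rs2} already suffice for $B_n(U_{g,1})$: the Fadell--Neuwirth-plus-Reidemeister--Schreier computation is lengthy, and the $S_n$-action on the free fibre generators must be tracked carefully (and the band-attachment variant requires equally careful control of the isotopies involved). A secondary and milder point is pinning down the kernel generator $\tau$ of Stage~2 as an explicit word in the $\sigma_i$ and $\rho_i$ with the correct exponents and ordering; once $\tau$ is identified, adjoining $\tau=1$ automatically realises its normal closure, so nothing further is required.
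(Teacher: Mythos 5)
The paper does not prove \reth{presnor}: it is quoted from~\cite{B}, the only argument supplied being that the presentation stated here is obtained from~\cite[Theorem~A.3]{B} by replacing each generator $a_i$ there by $\rho_i^{-1}$, together with the remark (via comparison with~\cite{VB}) that the presentation remains valid for $g=1$ and $n=1$. Your proposal is therefore an attempt at a from-scratch proof, and its overall shape --- bounded case first, by induction on $n$ through Fadell--Neuwirth sequences and a presentation-of-extensions argument --- is indeed the strategy of~\cite{B}. But as written it has a genuine gap, concentrated in Stage~2.

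You invoke as a ``standard fact'' that the capping homomorphism $\iota_{\ast}\colon B_n(U_{g,1})\to B_n(U_g)$ has kernel the normal closure of a single boundary braid $\tau$. This is true \emph{a posteriori} --- it is exactly what one reads off by comparing the two presentations --- but it is not an independently citable statement, and no easy van Kampen-type argument yields it: the inclusion $\FF_n(U_{g,1})\hookrightarrow\FF_n(U_g)$ is far from a homotopy equivalence, configuration spaces do not decompose well under gluing a disc, and the kernel on $\pi_1$ is precisely the object in question. In other words, given Stage~1, the single-element-kernel claim is equivalent to the completeness of the closed-surface presentation, so invoking it is circular. The proofs in the literature (\cite{B}, and Fadell--Van Buskirk/Van Buskirk in the sphere and projective-plane cases) do not cap off; they rerun the induction for the closed surface using the now non-split Fadell--Neuwirth sequence, with base case $n=1$ the one-relator group $\pi_1(U_g)$, and verify completeness with the standard lemma on presentations of extensions --- this is where relation~\reqref{totnor}, and only it, enters. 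A secondary weakness, which you flag yourself, is Stage~1: rewriting along $1\to P_n(U_{g,1})\to B_n(U_{g,1})\to S_n\to 1$ by Reidemeister--Schreier has index $n!$ and is not practicable; the usual route is the index-$n$ subgroup of braids whose permutation fixes the first strand, which sits in a split extension of $B_{n-1}(U_{g,1})$ by the free group $\pi_1(U_{g,1}\setminus\{n-1\ \text{points}\})$, so that relations~\reqref{rr} and~\reqref{rs2} can be tracked explicitly. Until Stage~2 is replaced by such an argument (or by a genuine proof of the capping-kernel claim), the proposal does not establish the theorem.
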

 
The presentation of $B_n(U_g)$ of~\cite[Theorem~A.3]{B} is slightly different from that given in \reth{presnor}, but one can obtain the first presentation from the second by replacing each generator $a_i$ in~\cite[Theorem~A.3]{B} by $\rho_i^{-1}$ in \reth{presnor} for all $i=1, \ldots, g$.


\begin{rem}
Notice that~\cite[Theorem~A.3]{B} was stated for $g>1$, but the presentation is also valid if $g=1$, in which case the relation~(\ref{eq:rs2}) does not exist.
This may be seen by showing that the map from $B_{n}(U_{1})$ to itself that sends the generator $\sigma_i$ (resp.\ $\rho_1$) of~\cite[Theorem~A.3]{B} to the generator $\sigma_i$ (resp.\ $\rho_1^{-1}$) of~\cite{VB} for all $1\leq i\leq n-1$ is well defined, and that it is an isomorphism. The presentation also holds if $n=1$. In particular, $B_{1}(U_{g})$ is a one-relator group, and the results of~\cite{LA} apply.
\end{rem}

The following theorem summarises some of the known results about the lower central series of braid groups of non-orientable surfaces without boundary~\cite{GG,GP,MP}, and is the analogue of Theorems~\ref{th:gam3closed} and~\ref{th:gam3open}. One may consult~\cite{BGe} for the case of pure braid groups.

\begin{thm}[\cite{GG,GP}]\label{th:gam3nor}
Let $g\geq 1$. 
Then:
\begin{enumerate}
\item\label{it:g1sigmaor} $\Gamma_1(B_n(U_{g}))/ \Gamma_2(B_n(U_{g}))=\Z^{g-1} \oplus \Z_2 \oplus \Z_2$ for all $n\geq 2$.
\item\label{it:g2sigmaor} $\Gamma_2(B_n(U_{g}))= \Gamma_3(B_n(U_{g}))$ for all $n\geq 3$. 
\item\label{it:g1sigmaorc} $\Gamma_2(B_n(U_{g}))$ is perfect if and only if  $n\geq 5$.
\item\label{it:residor} $B_n(U_{g})$ is residually nilpotent if and only if $n\leq 2$.
\end{enumerate}
\end{thm}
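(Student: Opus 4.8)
\textbf{Proof proposal for Theorem~\ref{th:gam3nor}.}

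The plan is to treat the four parts by reducing, wherever possible, to cases already handled in the literature and in the earlier parts of this paper, and otherwise by explicit computation with the presentation of \reth{presnor}. For part~\ref{it:g1sigmaor}, I would abelianise the presentation directly: the braid relations force all the $\sigma_i$ to a single generator $\sigma$, relation~(\ref{eq:rr}) becomes $\sigma^2=1$ in the abelianisation, relation~(\ref{eq:totnor}) becomes $\sum_{i=1}^g(-2)\rho_i = \sigma^2 = 0$ (so it imposes nothing new once $\sigma^2=0$ is taken into account, giving instead the relation $2(\rho_1+\cdots+\rho_g)=0$ in the free part), and relation~(\ref{eq:rs2}) is a commutator and dies. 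A short manipulation of the resulting finitely presented abelian group — change of basis replacing $\rho_g$ by $\rho_1+\cdots+\rho_g$ — yields $\Z^{g-1}\oplus\Z_2\oplus\Z_2$, with one $\Z_2$ coming from $\sigma$ and the other from the sum of the $\rho_i$. This should be essentially the computation already recorded in~\cite{GG} (or deducible from the case $n=1$ via~\cite{LA}), so I would cite it and present only the verification.

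For part~\ref{it:g2sigmaor}, the claim $\Gamma_2=\Gamma_3$ for $n\geq 3$ is the analogue of \reth{gam3closed}\ref{it:gam3g} and of \reth{gam3open}\ref{it:g3sigma}, and I expect it to follow from the computation of the quotient $\Gamma_2/\Gamma_3$ carried out in~\cite{GG,GP}: one shows that $B_n(U_g)/\Gamma_3$ has a presentation (analogous to~\cite[eq.~(10)]{BGeG}) in which, using $n\geq 3$ and the braid relations~(\ref{eq:artin1or})–(\ref{eq:artin2or}) together with~(\ref{eq:rs1}), every commutator of generators is trivial, so that $B_n(U_g)/\Gamma_3$ is itself abelian, whence $\Gamma_2\subseteq\Gamma_3$ and therefore $\Gamma_2=\Gamma_3$. (The point where $n\geq 3$ enters is exactly the same as in the orientable closed case: one needs $\sigma_{i+2}$ to exist in order to kill $[\sigma_1,\rho_i]$-type commutators using~(\ref{eq:rs1}) and the braid relation, and this fails for $n=2$.) Again this is in~\cite{GG,GP} and I would quote it.

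For part~\ref{it:residor}, the ``if'' direction splits into $n=1$ and $n=2$: for $n=1$, $B_1(U_g)$ is a one-relator group with the relator $\prod\rho_i^{-2}$, and residual nilpotence follows from~\cite{LA} as noted in the remark preceding the theorem; for $n=2$, I would invoke the same kind of argument as \repr{n=2orientablebound}\ref{it:resboundary1}, using the short exact sequence~(\ref{eq:permutation}) with $S_2\cong\Z_2$ and the residual $2$-finiteness (hence residual nilpotence) of $P_2(U_g)$ via~\cite[Lemma~1.5]{Gr} — this is precisely what~\cite{GG,GP} establish. For the ``only if'' direction, i.e.\ that $B_n(U_g)$ is not residually nilpotent when $n\geq 3$: by part~\ref{it:g2sigmaor} we have $\Gamma_2(B_n(U_g))=\Gamma_3(B_n(U_g))$, so $\bigcap_i\Gamma_i(B_n(U_g))\supseteq\Gamma_2(B_n(U_g))$, and it suffices to show $\Gamma_2(B_n(U_g))\neq\{1\}$, equivalently that $B_n(U_g)$ is non-abelian — which is clear, e.g.\ from the faithful copy of $B_n$ inside it, or directly because the $\sigma_i$ do not commute with the $\rho_i$. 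Part~\ref{it:g1sigmaorc}, that $\Gamma_2$ is perfect iff $n\geq 5$, is the subtlest: the sufficiency ($n\geq 5$) is from~\cite{GG,GP}; for the necessity I would argue as in \repr{perfect}, pushing forward via $\pi_n\colon B_n(U_g)\to S_n$ to see that for $n\in\{3,4\}$ the abelianisation of $\Gamma_2(S_n)=A_n$ is nontrivial ($\Z_3$), so $\Gamma_2(B_n(U_g))$ surjects onto a nontrivial abelian group and cannot be perfect, while for $n=1,2$ the groups $\Gamma_2$ are visibly abelian and nontrivial (by part~\ref{it:g1sigmaor} and the $n=1$, $n=2$ analysis).

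The main obstacle I anticipate is the bookkeeping in part~\ref{it:g2sigmaor}/\ref{it:g1sigmaorc}: getting the presentation of $B_n(U_g)/\Gamma_3(B_n(U_g))$ exactly right from relations~(\ref{eq:rr}), (\ref{eq:rs2}) and~(\ref{eq:totnor}) — in particular tracking the $2$-torsion contributed by~(\ref{eq:rr}) and the interaction with the surface relation~(\ref{eq:totnor}) — and then verifying the perfection threshold. Since all of this is available in~\cite{GG,GP}, my intention is to state the theorem as a citation with a brief indication of how the case $n\in\{1,2\}$ is folded in (via~\cite{LA} and an argument modelled on \repr{n=2orientablebound} and \repr{perfect}), rather than redoing the computations from scratch.
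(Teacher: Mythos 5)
Your proposal follows essentially the same route as the paper: the result is established mainly by citation to \cite{GG} (which covers $g=1$) and \cite{GP} (for $g\geq 2$), with part~\ref{it:g1sigmaor} verified directly from the presentation of \reth{presnor}, the `only if' half of part~\ref{it:g1sigmaorc} obtained by the argument of \repr{perfect}, and the $n=2$ case of residual nilpotence handled as in \repr{n=2orientablebound} via \cite{BGe}; your derivation of non-residual-nilpotence for $n\geq 3$ from part~\ref{it:g2sigmaor} together with non-abelianness is a correct (and standard) substitute for the citation the paper uses, and your sketched direct proof of part~\ref{it:g2sigmaor} (showing $B_n(U_g)/\Gamma_3$ is abelian using relations~\reqref{rs1},~\reqref{rs2} and the braid relations when $n\geq 3$) is sound. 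One small correction: for $n=1$ and $g\geq 3$ the group $\Gamma_2(B_1(U_g))=\Gamma_2(\pi_1(U_g))$ is a non-abelian free group, so it is not ``visibly abelian''; it is nevertheless not perfect (a non-trivial free group has non-trivial abelianisation), and for $n=2$ non-perfection follows from the non-triviality of $\Gamma_2/\Gamma_3$ given by \repr{b2nor}, so your conclusion stands.
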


\begin{proof}
If $g=1$, the four statements were proved in~\cite[Theorem~1 and Proposition~6]{GG}. So in the rest of the proof, we assume that $g\geq 2$.
\begin{enumerate}
\item The statement follows in a straightforward manner using the presentation of \reth{presnor}. 

\item  If $n\geq 3$, the fact that $\Gamma_2(B_n(U_{g}))= \Gamma_3(B_n(U_{g}))$ is a consequence of the proof of~\cite[Proposition~5.21]{GP} (resp.\ of~\cite[Theorem~6.1]{GP}) if $g=2$ (resp.\ if $g\geq 3$).

\item The  `if' part is a consequence of~\cite[Theorem~1.4]{GP}. 
The proof of the `only if' part is similar to that of \repr{perfect}, and is left to the reader.

\item This follows from~\cite[Theorem~1.4]{GP}.\qedhere
\end{enumerate}
\end{proof}



\begin{proof}[Proof of \reth{LinSurfcomb} in the case where $\Sigma=U_{g}$, and $g\geq 1$]
Let $n>m\ge 3$, where $n\neq 4$. Let $\Phi\colon\thinspace B_n(U_g) \to S_{m}$. As in the proof of the orientable case, we see that $\Phi(\sigma_1)= \cdots= \Phi(\sigma_{n-1})$. We denote this common element by $\sigma$. From relations~(\ref{eq:rs1}), $\sigma$ commutes with $\Phi(\rho_j)$ for all $j=1, \ldots, g$, so $\sigma$ belongs to the centre of $S_m$, and we conclude once more that $\sigma$ is trivial, and hence the homomorphism $\Phi$ factors through a surjective homomorphism $\Phi'\colon\thinspace B_n(U_g)/\langle\!\langle \sigma_1\rangle\!\rangle \to S_m$, where $\langle\!\langle \sigma_1\rangle\!\rangle$ denotes the normal closure of $\sigma_{1}$ in $B_n(U_g)$.
But  $B_n(U_g)/\langle\!\langle \sigma_1\rangle\!\rangle$ is Abelian by relations~(\ref{eq:rs2}), which yields a contradiction because an Abelian group cannot surject homomorphically onto a non-Abelian group. So there is no surjective homomorphism from $B_{n}(U_{g})$ onto $S_{m}$.
\end{proof}

As in the case of orientable surfaces, we may obtain more information about the lower central series of $B_2(U_g)$.

\begin{prop} \label{prop:b2nor}
Let $g\ge1$. Then the group $B_2(U_g)$ is residually $2$-finite, and so is residually nilpotent.
Moreover, the group $\Gamma_2(B_2(U_g))/\Gamma_3(B_2(U_g))$ is a non-trivial quotient of $\Z_2^{g}$.
\end{prop}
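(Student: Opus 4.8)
The statement has two parts. The first is that $B_2(U_g)$ is residually $2$-finite; the second computes (up to quotient) the lower central series term $\Gamma_2(B_2(U_g))/\Gamma_3(B_2(U_g))$. For the first part, the plan is to mimic the strategy used for $B_2(\Sigma_{g,b})$ in \repr{n=2orientablebound}: apply the short exact sequence~\reqref{permutation} with $\Sigma=U_g$ and $n=2$, namely $1\to P_2(U_g)\to B_2(U_g)\to S_2\to 1$. Since $S_2\cong\Z_2$ is a $2$-group, it suffices to know that $P_2(U_g)$ is residually $2$-finite, which is available in the literature (one can cite~\cite{BGe} for the lower central series of pure braid groups of non-orientable surfaces, or argue via the Fadell--Neuwirth-type exact sequence realising $P_2(U_g)$ as an extension of $\pi_1(U_g)$ by a free group of a punctured $U_g$, both of which are residually $2$-finite). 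Then \cite[Lemma~1.5]{Gr} — which is exactly the tool invoked in \repr{n=2orientablebound} — gives that $B_2(U_g)$ is residually nilpotent; residual $2$-finiteness follows from the same lemma since the extending groups are residually $2$-finite.

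\textbf{The lower central series quotient.} For the second part, I would compute $\Gamma_2(B_2(U_g))/\Gamma_3(B_2(U_g))$ directly from the presentation of \reth{presnor} specialised to $n=2$, exactly as was done in the proof of \repr{pre} for the orientable case. Setting $n=2$, there is a single braid generator $\sigma_1$ (the Artin relations~\reqref{artin1or} and~\reqref{artin2or} are vacuous, and~\reqref{rs1} is also vacuous since there is no $\sigma_j$ with $j\ge2$). Working modulo $\Gamma_3(B_2(U_g))$, relation~\reqref{rs2} gives $1=[\rho_r,\sigma_1^{-1}\rho_s\sigma_1]$, hence $[\rho_r,\rho_s]=1$ in the quotient for all $1\le s<r\le g$. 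Relation~\reqref{rr} reads $\rho_i\sigma_1\rho_i\sigma_1=\sigma_1^{-1}\rho_i\sigma_1\rho_i$; rewriting and reducing modulo $\Gamma_3$, I expect to obtain a relation forcing $[\rho_i,\sigma_1]$ (equivalently some combination such as $\sigma_1^2$ expressed via these commutators) to have order dividing $2$. Finally the surface relation~\reqref{totnor} becomes $\prod_{i=1}^g\rho_i^{-2}=\sigma_1^2$, which upon Abelianising the quotient (note $\Gamma_2/\Gamma_3$ is central in $B_2/\Gamma_3$) relates $\sigma_1^2$ to the $\rho_i^{-2}$; combined with part~(a) of \reth{gam3nor}, where $\sigma_1$ already has order $2$ in the Abelianisation so $\sigma_1^2\in\Gamma_2$, this shows $\Gamma_2(B_2(U_g))/\Gamma_3(B_2(U_g))$ is generated by the classes of the commutators $[\rho_i,\sigma_1]$, each of order at most $2$, hence is a quotient of $\Z_2^g$. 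Non-triviality follows because $B_2(U_g)$ is residually nilpotent (part~(a)) but not nilpotent: arguing as in \rerem{notnilp} and in the proof of \repr{n=2orientablebound}\ref{it:resboundary1}, if $\Gamma_2/\Gamma_3$ were trivial then $B_2(U_g)$ would be nilpotent, so some $\Gamma_i$ would be trivial with $i$ minimal, forcing $\Gamma_{i-1}$ into the (trivial) centre of $B_2(U_g)$ — a contradiction; one needs the centre of $B_2(U_g)$ to be trivial, which holds for $g\ge2$ by the same references~\cite{GGmpcps,PR} as in the orientable case, while the case $g=1$ is covered by~\cite[Theorem~1 and Proposition~6]{GG}.

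\textbf{Main obstacle.} I expect the delicate points to be twofold. First, the careful Witt--Hall commutator bookkeeping needed to extract the exact constraints on $[\rho_i,\sigma_1]$ and on $\sigma_1^2$ from relations~\reqref{rr} and~\reqref{totnor} modulo $\Gamma_3$ — in the orientable case relation~\reqref{cici} gave $[c_i,\sigma_1]^2=1$, and I anticipate an analogous (but sign-twisted, because of the $\sigma_1^{-1}$ on the right-hand side of~\reqref{rr}) computation here, which must be done with care to be sure the order is exactly bounded by $2$ and not by $4$. Second, pinning down the non-triviality and the residual $2$-finiteness of $P_2(U_g)$ cleanly: one must make sure the relevant statement about $P_2(U_g)$ (or about $B_2(U_1)=B_2(\RP\#\RP)$ if $g=1$, which is the Klein bottle braid group) is genuinely available, citing~\cite{GG} for $g=1$ and~\cite{BGe} (or a direct extension argument) for $g\ge2$. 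The rest is routine once the presentation-level commutator identities are in hand.
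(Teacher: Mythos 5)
The first half of your plan (the sequence~\reqref{permutation} with $n=2$, Gruenberg's lemma, and the residual $2$-finiteness of $P_2(U_g)$ from~\cite{BGe}) and your presentation-level computation (relation~\reqref{rs2} killing $[\rho_r,\rho_s]$ modulo $\Gamma_3$, and relation~\reqref{rr} bounding the order of $[\rho_i,\sigma_1]$) are exactly the paper's argument for $g\ge 2$, and the commutator estimate you flag as delicate does come out as you hope: abelianising~\reqref{rr} already gives $\sigma_1^2\in\Gamma_2(B_2(U_g))$, whence $[\rho_i,\sigma_1]^2\equiv[\rho_i,\sigma_1^2]\equiv 1 \pmod{\Gamma_3(B_2(U_g))}$, and working~\reqref{rr} out modulo $\Gamma_3$ gives moreover $\sigma_1^2\equiv 1$; relation~\reqref{totnor} is not needed for this part.

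The genuine gap is in your non-triviality step. You rely on the centre of $B_2(U_g)$ being trivial for all $g\ge 2$, but $U_2$ is the Klein bottle, and the triviality result of~\cite{PR} excludes the torus and the Klein bottle, whose braid groups have non-trivial centre; this is precisely why the paper invokes~\cite[Proposition~1.6]{PR} only for $g\ge 3$ and treats $g=2$ separately, deducing that $B_2(U_2)$ is not nilpotent from the fact that $P_2(U_2)$ is not nilpotent~\cite[Theorem~5.4]{GP}. Your $g=1$ case is also off: $U_1$ is $\RP$, not $\RP\#\RP$ (that surface is $U_2$), and $B_2(\RP)$ is the generalised quaternion group $\mathcal{Q}_{16}$ of order $16$~\cite{VB}, a finite $2$-group; it is therefore nilpotent, so the assertion ``residually nilpotent but not nilpotent'' is false for $g=1$, and there one simply reads off residual $2$-finiteness and $\Gamma_2/\Gamma_3\cong\Z_2$ directly from $\mathcal{Q}_{16}$, as the paper does. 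Note finally that for the statement at hand you need less than non-nilpotency: if $\Gamma_2(B_2(U_g))=\Gamma_3(B_2(U_g))$, then $\Gamma_i(B_2(U_g))=\Gamma_2(B_2(U_g))$ for all $i\ge 2$, so residual nilpotence forces $\Gamma_2(B_2(U_g))=\brak{1}$, i.e.\ $B_2(U_g)$ would be Abelian, which it is not for any $g\ge 1$ (it is $\mathcal{Q}_{16}$ for $g=1$, and for $g\ge 2$ it contains the free group $\pi_1(U_g\smallsetminus\brak{\text{pt}})$ of rank $g\ge2$ inside $P_2(U_g)$); this repairs your argument uniformly, with no appeal to centres.
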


\begin{proof}
The case $g=1$ is straightforward because $B_2(\RP)$ is isomorphic to the generalised quaternion group of order $16$~\cite[Theorem, p.~94]{VB}. In particular, $\Gamma_2(B_2(\RP))/\Gamma_3(B_2(\RP))\cong \Z_{2}$.
If $g\geq 2$, the residual nilpotence of $B_2(U_g)$ follows by arguing as in the proof of \repr{n=2orientablebound}\ref{it:resboundary1}, using the fact that $P_2(U_g)$ is residually $2$-finite~\cite[Theorem~1.1]{BGe}. Note that $B_2(U_2)$ is not nilpotent, since otherwise $P_2(U_2)$ would  be nilpotent, but we know from~\cite[Theorem~5.4]{GP} that this is not the case. If $g\geq 3$, the centre of the group $B_2(U_g)$ is trivial~\cite[Proposition~1.6]{PR}, and as in \rerem{notnilp}, we can prove that the group $\Gamma_2(B_2(U_g))/\Gamma_3(B_2(U_g))$ is non-trivial. To see that this group is a quotient of $\Z_2^{g}$, observe that for all $1\leq s<r\leq g$, we have $[\rho_r,\sigma_1^{-1}  \rho_s \sigma_1]=1$ in $B_2(U_g)$ by relation~(\ref{eq:rs2}), so $[\rho_r,\rho_s]=1$ in $B_2(U_g)/\Gamma_3(B_2(U_g))$. Thus $\Gamma_2(B_2(U_g))/\Gamma_3(B_2(U_g))$ is generated by the commutators of the form $[\rho_i,  \sigma_1]$, where $1\leq i\leq g$. Since $\sigma_1^2=1$ in $B_2(U_g)/\Gamma_3(B_2(U_g))$ by relation~(\ref{eq:rr}), these commutators are of order at most $2$.
\end{proof}

At this point, we may prove \repr{ornor} concerning the existence of a surjective homomorphism between $B_n(U_g)$ onto $B_n(\Sigma_{g-1})$.


\begin{proof}[Proof of \repr{ornor}]
First suppose that $g=1$, in which case $U_1=\RP$ and $\Sigma_0=\SP$. If $n=1$, $B_1(\SP)$ is trivial, and so there is clearly a surjection of $B_1(\RP)$ onto $B_1(\SP)$, and if $n=2$ then $B_2(\RP)$ is isomorphic to the generalised quaternion group of order $16$~\cite[Theorem, p.~94]{VB}, and $B_2(\SP)\cong \Z_2$ \cite[third theorem, p.~255]{FV}, and $B_2(\RP)$ surjects homomorphically onto $B_2(\SP)$. Finally, if $n\geq 3$, $(B_n(\RP))_{\text{Ab}}\cong \Z_2\oplus \Z_2$ by \reth{gam3nor}\ref{it:g1sigmaor}, and $(B_n(S^2))_{\text{Ab}}\cong \Z_{2(n-1)}$ by~\reth{gam3sph}\ref{it:gam1sp}, which implies that there is no surjection homomorphism from $B_n(\RP)$ onto $B_n(\SP)$. This proves the result in the case $g=1$.

Now assume that $g\ge 2$. If $n \ge 2$, $(B_n(U_g))_{\text{Ab}}\cong \Z^{g-1} \oplus \Z_2\oplus \Z_2$ by \reth{gam3nor}\ref{it:g1sigmaor} and $(B_n(\Sigma_{g-1}))_{\text{Ab}}= \Z^{2(g-1)} \oplus \Z_2$ by \reth{gam3closed}\ref{it:gam12g} while if $n=1$, $(B_1(U_g))_{\text{Ab}}\cong \Z^{g-1}  \oplus \Z_2$ and $(B_n(\Sigma_{g-1}))_{\text{Ab}}= \Z^{2(g-1)}$. Therefore it is not possible to surject $B_n(U_g)$ onto $B_n(\Sigma_{g-1}$ in this case..
\end{proof}

To prove \reth{gensurj} in the non-orientable case, we will require the following lemma for the Klein bottle.

\begin{lem}\label{lem:garrafe}
Let $\K$ be the Klein bottle. If $x$ and $y$ are elements of $\pi_1(\K)$, then $xyxy=y^{-1}xyx$ if and only if  $y=1$.
\end{lem}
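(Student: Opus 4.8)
The plan is to work in the group $\pi_1(\K)$ using its standard presentation $\langle x, y \mid xyxy^{-1}=1\rangle$ — equivalently $\langle x,y\mid yxy=x^{-1}\rangle$ after renaming — but it is cleaner to pick the presentation $\pi_1(\K)=\langle a,b\mid abab^{-1}\rangle$ and note that $\pi_1(\K)$ has a normal infinite-cyclic subgroup $\langle a\rangle$ with quotient $\langle b\rangle\cong\Z$, and that every element can be written uniquely as $a^p b^q$ with $p,q\in\Z$, subject to the commutation rule $b a b^{-1}=a^{-1}$, hence $b^q a^p = a^{(-1)^q p} b^q$. One direction is trivial: if $y=1$ then both sides of $xyxy=y^{-1}xyx$ equal $x^2$. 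So the content is the forward implication.

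For the forward direction I would first dispose of the case $x=1$: then the equation reads $y^2 = y^2$, which is no constraint, so I must be careful — wait, that would make the lemma false. Let me re-read: the claim is $xyxy=y^{-1}xyx \iff y=1$. If $x=1$ the left side is $y^2$ and the right side is $y^{-1}\cdot 1\cdot y\cdot 1 = 1$, so the equation becomes $y^2=1$, forcing $y=1$ since $\pi_1(\K)$ is torsion-free. Good. So the first step is: record that $\pi_1(\K)$ is torsion-free (it is the fundamental group of an aspherical closed surface, or: it is a non-trivial extension of $\Z$ by $\Z$). Then rewrite the relation $xyxy=y^{-1}xyx$ by left-multiplying by $y$ and right-multiplying by $x^{-1}$: it becomes $yxyxyx^{-1} = xy x$, i.e. $(yx)^3 x^{-1} \cdot \text{(something)}$ — better: from $xyxy = y^{-1}xyx$ multiply on the left by $y$ to get $yxyxy = xyx$, which says $(yx)(yx)(y) = (x)(y)(x)$; equivalently $(yx)^2 y = xyx$. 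Hmm, let me instead just substitute the normal-form coordinates directly.

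The concrete plan: write $x = a^{p}b^{q}$, $y = a^{r}b^{s}$ with $p,q,r,s\in\Z$. Using $b^q a^r = a^{(-1)^q r}b^q$ repeatedly, expand both $xyxy$ and $y^{-1}xyx$ into normal form $a^{P}b^{Q}$. The $b$-exponent of $xyxy$ is $2q+2s$ and that of $y^{-1}xyx$ is $-s + q + s + q = 2q$, so equality forces $2s=0$, i.e. $s=0$, i.e. $y=a^r$ lies in the cyclic normal subgroup $\langle a\rangle$. Now with $y=a^r$ the equation $xyxy=y^{-1}xyx$ becomes $x a^r x a^r = a^{-r} x a^r x$. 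Conjugating, this says $x a^r x a^r a^{-1}\cdots$ — more directly, multiply both sides on the left by $a^r$ and on the right by $x^{-1}$: $a^r x a^r x a^r x^{-1} = x a^r x$, hmm. Cleaner: the equation $xa^r x a^r = a^{-r}xa^r x$ is equivalent to $a^r (x a^r x a^r)= (x a^r x) \cdot a^r$... Let me just use the coordinates once more with $s=0$: $x=a^pb^q$, so $x a^r x a^r = a^p b^q a^r a^p b^q a^r = a^p a^{(-1)^q r} b^q a^p b^q a^r = a^{p+(-1)^q r + (-1)^q p} b^{2q} a^r = a^{p + (-1)^q r + (-1)^q p + (-1)^{2q} r} b^{2q} = a^{(1+(-1)^q)(p+r)} b^{2q}$, while $a^{-r} x a^r x = a^{-r} a^p b^q a^r a^p b^q = a^{-r+p} a^{(-1)^q r} b^q a^p b^q = a^{-r + p + (-1)^q r + (-1)^q p} b^{2q}$. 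Equality of the $a$-exponents gives $(1+(-1)^q)(p+r) = p - r + (-1)^q(r+p)$, i.e. $(p+r) + (-1)^q(p+r) = (p+r)(-1)^q + (p - r)$, i.e. $(p+r) = p-r$, i.e. $2r=0$, so $r=0$ and $y=1$. The main (and only mildly fiddly) obstacle is bookkeeping the sign factors $(-1)^q$ correctly through the normal-form multiplications; everything else is immediate, and no term ever depends on $q$ once the dust settles, which is a useful sanity check. I would present it as: reduce to $y\in\langle a\rangle$ via the $b$-exponent count, then kill $y$ via the $a$-exponent count.
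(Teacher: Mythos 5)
Your proof is correct, and it is essentially the paper's argument: the normal form $a^pb^q$ with $b^qa^p=a^{(-1)^qp}b^q$ is exactly the identification $\pi_1(\K)\cong\Z\rtimes\Z$ used there, and you likewise compare the $b$-exponents to force $s=0$ and then the $a$-exponents to force $r=0$. The exponent bookkeeping you carried out matches the paper's coordinate computation, so no changes are needed.
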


\begin{proof}
If $y=1$ then the relation clearly holds. Conversely, suppose that there exist $x,y\in \pi_1(\K)$ that satisfy the relation. 
Recall that $\pi_1(\K)$ is isomorphic to the semi-direct product $\Z \rtimes \Z$, where the action is given by multiplication by $-1$. With respect to this decomposition, let $x=(a,b)$ and $y=(c,d)$. Substituting these elements into the given relation, the second coordinate yields $2b+2d=2b$, so $d=0$, and computing the first coordinate, we obtain $a+(-1)^{b}c+(-1)^{b}(a+(-1)^{b}c)= -c+a+(-1)^b(c+a)$. Therefore $-c=c$, so $c=0$ and hence $y$ is the trivial element of $\pi_{1}(\K)$.
\end{proof}

We now prove \reth{gensurj}\ref{it:main1b} in the non-orientable case, where $g>1$.


\begin{proof}[Proof of \reth{gensurj}\ref{it:main1b}, where $\Sigma=U_{g}$, and $g> 1$]
We study the three cases of the statement of \reth{gensurj}\ref{it:main1b} separately.
\begin{enumerate}

\item Let $n<m$ and $n\in \brak{1,2}$. Using \reth{gam3nor} and the fact that $\Gamma_1(B_1(U_{g}))/ \Gamma_2(B_1(U_{g}))=\Z^{g-1} \oplus \Z_2$, the arguments used in the proof of \repr{mingen} also apply to the non-orientable case. In particular, $G(B_{1}(U_{g}))=g$, $G(B_{2}(U_{g}))= g+1$ and $G(B_{m}(U_{g}))= g+2$ for all $m\ge 3$. It follows that there is no surjective homomorphism in this case. 

\item Suppose that $n>m$ and $m\in \brak{1,2}$. If $n\geq 3$, we have $\Gamma_2(B_n(U_{g}))= \Gamma_3(B_n(U_{g}))$ by \reth{gam3nor}\ref{it:g2sigmaor}. On the other hand, $B_m(U_{g})$ is residually nilpotent if $m=1$ (and is in fact residually $2$-finite, see for instance~\cite[proof of Theorem~4.5]{BGe}), or if $m=2$ by \repr{n=2orientablebound}. So 
$B_{n}(U_{g})$ cannot surject homomorphically onto $B_{m}(U_{g})$ if $n\geq 3$ and $m\in \brak{1,2}$. So assume that $n=2$ and $m=1$. If $g>2$, the result follows in a similar manner by noting that $\Gamma_2(B_{2}(U_g))/\Gamma_3(B_{2}(U_g))$ is finite by \repr{b2nor}, but that $\Gamma_2(\pi_1(U_g))/\Gamma_3(\pi_1(U_g))$ is infinite~\cite{LA}. So suppose that $g=2$, and assume that there exists a surjective homomorphism $\Phi\colon\thinspace B_2(U_2) \to  \pi_1(U_{2})$. Applying $\Phi$ to relation~(\ref{eq:rr}) with $i=1$, we have that $ \Phi(\rho_1) \Phi(\sigma_1) \Phi(\rho_1) \Phi(\sigma_1)= \Phi(\sigma_1)^{-1} \Phi(\rho_1)  \Phi(\sigma_1) \Phi(\rho_1) $. The relation given in the statement of \relem{garrafe} is therefore satisfied if we take $x=\Phi(\rho_1)$ and $y=\Phi(\sigma_1)$, and thus $\Phi(\sigma_1)=1$. It follows from relation~(\ref{eq:rs2}) that $\Phi(\rho_1)$ and $ \Phi(\rho_2)$ commute. We conclude that the image of $\Phi$ is an Abelian subgroup of $\pi_1(U_{2})$, and since this latter group is non Abelian, $\Phi$ cannot be surjective.

\item\label{it:proofbiii} If $n>m \geq 3$ and $n\neq 4$, it suffices to argue as in the proof of part~\ref{it:ngrmc} of \reth{gensurj}\ref{it:main1a}\ref{it:main1aii} given in \resec{suror}, and apply \reth{LinSurfcomb} in the non-orientable case.\qedhere
\end{enumerate} 
\end{proof}

The following theorem gives some results in the case where $g=1$.

\begin{thm}\label{th:rp}\mbox{} 
\begin{enumerate}
\item\label{it:rpa}
Suppose that one of the following conditions holds:
\begin{enumerate}
\item\label{it:rpai} $n<m$ and $n\in \brak{1,2}$.
\item\label{it:rpaii} $n>m\geq 2$.
\end{enumerate}
Then there is no surjective homomorphism from $B_{n}(\RP)$ to $B_{m}(\RP)$.

\item\label{it:rpb} Let $m,n\geq 2$, let $n'=2\lfloor \frac{n}{2} \rfloor$ and let $m'=2\lfloor \frac{m}{2} \rfloor$, where $\lfloor x \rfloor$ denotes the greatest integer less than or equal to $x$. Set $n'=2^{l}s$ and $m'=2^{k}r$, where $l,k\in \N$, and $s,r$ are odd integers. If $l>k$ then the image of any homomorphism $f\colon\thinspace B_{n}(\RP) \to B_{m}(\RP)$ is finite cyclic. In particular, there is no surjective homomorphism from $B_{n}(\RP)$ to $B_{m}(\RP)$ in this case.
\end{enumerate}
\end{thm}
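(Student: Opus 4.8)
The plan is to treat the two parts separately, using the torsion structure of $B_n(\RP)$ — which is available because these braid groups have torsion, unlike the higher-genus cases — together with the lower central series facts of \reth{gam3nor}.

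For part~\ref{it:rpa}, I would first dispose of case~\ref{it:rpai} exactly as in the previous sections: by \reth{gam3nor}\ref{it:g1sigmaor} together with the computation $\Gamma_1(B_1(\RP))/\Gamma_2(B_1(\RP))\cong\Z_2$, one gets $G(B_1(\RP))=1$, $G(B_2(\RP))=2$ and $G(B_m(\RP))=2$ for all $m\ge 3$ (note $g=1$ here, so $g-1=0$), and the inequality~\reqref{Abelian} rules out surjections from the smaller $n\in\brak{1,2}$ onto $B_m(\RP)$ with $m>n$ whenever the minimal generating numbers are strictly smaller — which needs a little care when $n=2$ and $m\ge 3$, where instead one should Abelianise and observe $(B_2(\RP))_{\mathrm{Ab}}\cong\Z_2$ cannot surject onto $(B_m(\RP))_{\mathrm{Ab}}\cong\Z_2\oplus\Z_2$. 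For case~\ref{it:rpaii}, $n>m\ge 2$, I would split off the small cases: $B_1(\RP)$ is trivial and $B_2(\RP)\cong\mathcal Q_{16}$ is finite (\cite{VB}), while $B_n(\RP)$ is infinite for $n\ge 3$ \cite{VB}, and an infinite group that surjects onto $B_2(\RP)=\mathcal Q_{16}$ is certainly possible, so this case genuinely needs the Abelianisation obstruction: $(B_n(\RP))_{\mathrm{Ab}}\cong\Z_2\oplus\Z_2$ for $n\ge 3$ cannot surject onto $(B_m(\RP))_{\mathrm{Ab}}$ when $m\in\brak{3,4,\dots}$ since the latter is also $\Z_2\oplus\Z_2$; that is not enough, so the real argument should mimic case~\ref{it:ngrmc}: compose with the permutation homomorphism and invoke \reth{LinSurfcomb} in the non-orientable case to kill $n>m\ge 3$ with $n\ne 4$, then handle $(n,m)=(4,3)$ and the small pairs by hand using finite-subgroup/torsion arguments analogous to the sphere case in \resec{sphere}.

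For part~\ref{it:rpb}, the idea is to exploit the maximal finite cyclic subgroups of $B_n(\RP)$, whose orders are controlled by the $2$-adic valuation of $n'=2\lfloor n/2\rfloor$: the relevant fact (from the classification of finite subgroups of $B_n(\RP)$, e.g.\ \cite{GGblms} and related work) is that $B_n(\RP)$ contains an element whose order is divisible by $2^{l+1}$ coming from a $2^l$-torsion class, while every finite cyclic subgroup of $B_m(\RP)$ has order dividing an expression whose $2$-part is at most $2^{k+1}$. Thus if $l>k$, a generator of high $2$-power order in $B_n(\RP)$ must map to an element of strictly smaller $2$-power order in $B_m(\RP)$, forcing its image into the (unique) central element of order $2$, i.e.\ the full-twist. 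One would then argue, as in the $(4,3)$ analysis in \resec{sphere}, that $B_n(\RP)$ is generated by such high-torsion elements (or by two of them), deduce that their images all lie in the centre $\langle \Delta_m^2\rangle$ of $B_m(\RP)$, and conclude that $\im f$ is cyclic; a cyclic subgroup of the infinite (for $m\ge 3$) group $B_m(\RP)$ cannot be all of it, and for $m=2$ cyclicity already contradicts surjectivity onto $\mathcal Q_{16}$.

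The main obstacle will be part~\ref{it:rpb}: one needs a precise statement about the orders of torsion elements and about a small generating set of $B_n(\RP)$ by elements of near-maximal $2$-power order, in the spirit of the result $B_4(\SP)=\langle\alpha_0,\alpha_1\rangle$ used in \resec{sphere}. Making the "$2^{l+1}$ versus $2^{k+1}$" comparison rigorous — i.e.\ pinning down exactly which $2$-powers occur as orders of elements of $B_n(\RP)$ in terms of $l=v_2(n')$ — is where the real work lies; the rest is then a routine centralising argument using that the full-twist $\Delta_m^2$ is the unique element of order $2$ in $B_m(\RP)$ and is central.
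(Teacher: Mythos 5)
Your overall strategy (torsion plus lower central series, modelled on the $(4,3)$ sphere argument) is the paper's strategy, but as written there are concrete errors and a missing key idea. In part~\ref{it:rpa}, your fallback for $n=2<m$ rests on the false computation $(B_2(\RP))_{\text{Ab}}\cong\Z_2$: since $B_2(\RP)\cong\mathcal{Q}_{16}$, its Abelianisation is $\Z_2\oplus\Z_2$, which coincides with $(B_m(\RP))_{\text{Ab}}$ for $m\geq 3$ by \reth{gam3nor}\ref{it:g1sigmaor} with $g=1$; so neither minimal generating numbers (both equal to $2$) nor Abelianisations obstruct anything here. The correct, and the paper's, argument for~\ref{it:rpai} is simply that $B_1(\RP)\cong\Z_2$ and $B_2(\RP)$ is finite of order $16$, while $B_m(\RP)$ is infinite for $m\geq3$. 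Symmetrically, for $n>m=2$ you concede that Abelianisation fails but offer only unspecified ``finite-subgroup/torsion arguments''; the paper's argument is that $\Gamma_2(B_n(\RP))=\Gamma_3(B_n(\RP))$ for $n\geq 3$ (equivalently, $B_n(\RP)$ is not residually nilpotent), so any quotient $H$ satisfies $\Gamma_2(H)=\Gamma_3(H)$, which forces a nilpotent quotient to be Abelian --- impossible for the nilpotent, non-Abelian group $\mathcal{Q}_{16}$. Your treatment of $n>m\geq3$, $n\neq4$ via \reth{LinSurfcomb} agrees with the paper, and $(4,3)$ is indeed deferred to part~\ref{it:rpb}.

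In part~\ref{it:rpb} the pivotal step of your sketch --- that an element of too-large $2$-power order ``must map into the unique central element of order $2$'' --- does not follow from comparing torsion, and this is exactly where the paper's real work lies. Taking $x\in\brak{\rho_n\sigma_{n-1}\cdots\sigma_1,\ \rho_{n-1}\sigma_{n-2}\cdots\sigma_1}$ of order $4n'=2^{l+2}s$ (the maximal $2$-power torsions are $2^{l+2}$ and $2^{k+2}$, not $2^{l+1}$ and $2^{k+1}$), the order comparison only gives $f(x^{2^{k+2}s})=1$, hence $\Delta_n^2=x^{2^{l+1}s}\in\ker{f}$: it kills the full twist of the \emph{source}, but leaves $f(x)$ an arbitrary element whose order has $2$-part at most $2^{k+2}$. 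To force $f(x)\in\ang{\Delta_m^2}$ the paper passes to the dicyclic subgroup $H=\ang{x,y}\cong\operatorname{Dic}_{4n'}$ (with $y=\Delta_n$ or $\Delta_n a^{-1}$), observes that $f\lvert_H$ factors through the dihedral quotient $H/\ang{\Delta_n^2}$, uses the fact that braid groups of $\RP$ contain no dihedral subgroups, and analyses the possible index-$2$ kernels to conclude $x^2\in\ker{f}$; none of this is ``routine centralising'', and it is absent from your outline. Note also that only $f(x)$ ends up central, not the images of both generators as you assert: cyclicity of $\im{f}=\ang{f(x),f(x')}$ follows because $f(x)$ is central of order at most $2$, $f(x')$ has finite order, and $\Delta_m^2$ is the unique involution of $B_m(\RP)$. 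Without the dicyclic/no-dihedral-subgroup input, your argument stalls after $\Delta_n^2\in\ker{f}$.
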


\begin{rems}\mbox{}\label{rems:rp2}
\begin{enumerate}

\item\label{it:rp2b} If $n\geq 2$ then $B_{n}(\RP)_{\text{Ab}}\cong \Z_{2}\oplus \Z_{2}$ by \reth{gam3nor}\ref{it:g1sigmaor}, and since $B_{1}(\RP)\cong \Z_{2}$, we see that $B_{n}(\RP)$ surjects homomorphically onto $B_{1}(\RP)$ via Abelianisation.

\item Comparing the statement of \reth{gensurj}\ref{it:main1b} with that of \reth{rp}\ref{it:rpa}, if $n>2$, then  by the previous remark, there is a surjective homomorphism from $B_{n}(\Sigma)$ to $B_{2}(\Sigma)$ if $\Sigma=\RP$, which is not the case if $\Sigma=\Sigma_{g,b}$ or $U_{g+1}$, where $b,g\geq 1$. Further, if $n=4$ and $m=3$, we do not know whether there exists a surjective homomorphism from $B_{4}(\Sigma)$ to $B_{3}(\Sigma)$ if $\Sigma=\Sigma_{g,b}$ or $U_{g+1}$, where $b,g\geq 1$, but \reth{rp}\ref{it:rpaii} shows that there does not exist such a homomorphism if $\Sigma=\RP$.
\end{enumerate}
\end{rems}

\begin{proof}[Proof of \reth{rp}]\mbox{}
\begin{enumerate}
\item \begin{enumerate}
\item If $n<m$ and $n\in \brak{1,2}$, the conclusion follows from the fact that $B_{1}(\RP)\cong \Z_{2}$, $B_{2}(\RP)$ is isomorphic to the binary dicyclic group of order $16$, and if $m\geq 3$, $B_{m}(\RP)$ is infinite~\cite[Theorem, p.94]{VB}.

\item Assume that $n>m\geq 2$. If $m=2$, the result is a consequence of the fact that $B_{2}(\RP)$ is residually nilpotent, while $B_{n}(\RP)$ is not for all $n\geq 3$ by \reth{gam3nor}. Now suppose that $m\geq 3$. If $n\neq 4$, the result follows as in the proof of part~\ref{it:proofbiii} of \reth{gensurj}\ref{it:main1b} by applying \reth{LinSurfcomb} in the non-orientable case. We defer the proof of the case $n=4$ and $m=3$ to part~\ref{it:rpb}.
\end{enumerate}

\item Let $m,n\geq 2$, let $n'=2\lfloor \frac{n}{2} \rfloor$ and let $m'=2\lfloor \frac{m}{2} \rfloor$, and let $n'=2^{l}s$ and $m'=2^{k}r$, where $l,k\in \N$, and $s,r$ are odd integers. Let $\phi\colon\thinspace B_{n}(\RP) \to B_{m}(\RP)$ be a homomorphism. Consider the elements $a=\rho_{n}\sigma_{n-1} \cdots \sigma_{1}$ and $b=\rho_{n-1}\sigma_{n-2} \cdots \sigma_{1}$ of $B_{n}(\RP)$, where we use Van Buskirk's presentation of $B_{n}(\RP)$~\cite[p.~83]{VB}. By~\cite[Proposition~26]{GGagt}, $a$ (resp.\ $b$) is of order $4n$ (resp.\ $4(n-1)$). Let $x=a$ and $x'=b$ (resp.\ $x=b$ and $x'=a$) if $n$ is even (resp.\ is odd). Then $x$ is of order $4n'$, which in terms of the notation introduced in the statement, is equal to $2^{l+2}s$. Observe also that from the proof of~\cite[Theorem~6]{GGzeit}, $B_{n}(\RP)=\langle x,x' \rangle$. 
By~\cite[Theorem~4]{GGagt}, the (maximal) torsion of $B_{n}(\RP)$ (resp.\ of $B_{m}(\RP)$) is $4n$ and $4(n-1)$ (resp.\ $4m$ and $4(m-1)$), and so the maximal torsion in $B_{n}(\RP)$ that is a power of $2$ is equal to $2^{l+2}$ in $B_{n}(\RP)$, and is realised by $x^{s}$, and the maximal torsion in $B_{m}(\RP)$ that is a power of $2$ is equal to $2^{k+2}$. It follows that the order of $f(x^{s})$ is a divisor of $2^{k+2}$, in particular $f(x^{2^{k+2}s})=1$ in $B_{m}(\RP)$. Now $l\geq k+1$ by hypothesis, and so $1=(f(x^{2^{k+2}s}))^{2^{l-k-1}}=f(x^{2^{l+1}s})$. Since $x$ is of order $2^{l+2}s$, $x^{2^{l+1}s}$ is of order $2$, so is equal to the full twist braid $\Delta_{n}^{2}$ of $B_{n}(\RP)$, using the fact that $\Delta_{n}^{2}$ is the unique element of $B_{n}(\RP)$ of order $2$~\cite[Proposition~23]{GGagt}. We conclude that $\Delta_{n}^{2}\in \ker{f}$.

Now let $H=\ang{x,y}$, where $y=\Delta_{n}$ (resp.\ $y=\Delta_{n}a^{-1}$) if $n$ is even (resp.\ $n$ is odd). By~\cite[Proposition~15]{GGjlms}, $H$ is isomorphic to the dicyclic group $\operatorname{Dic}_{4n'}$ of order $4n'$, and the generators satisfy the relations $x^{n'}=y^{2}$ and $yxy^{-1}=x^{-1}$. Using once more the fact that $\Delta_{n}^{2}$ is the unique element of $B_{n}(\RP)$ of order $2$, we have $\Delta_{n}^{2}\in \ker{\phi} \cap H$. Further, $\Delta_{n}^{2}$ is central in $B_{n}(\RP)$~\cite[Proposition~6.1]{M}, and hence the restriction $f\left\lvert_{H}\right. \colon\thinspace H \to f(H)$ of $f$ to $H$ factors through the quotient $H/\langle \Delta_{n}^{2} \rangle$. But using the relations of $H$, this quotient is isomorphic to the dihedral group of order $2n'$, so $f(H)$ is a subgroup of $B_{m}(\RP)$ that is a quotient of $H/\langle \Delta_{n}^{2} \rangle$. Now the quotients of dihedral groups are either the trivial group, cyclic of order $2$ or dihedral, and since the braid groups of $\RP$ do not have dihedral subgroups~\cite[Theorem~5]{GGjlms}, it follows that $f(H)$ is either trivial or cyclic of order $2$, so $\ker{f\left\lvert_{H}\right.}$ is either equal to $H$, or is a subgroup of $H$ of index $2$. If $\ker{f\left\lvert_{H}\right.}$ is of index $2$ in $H$, then by analysing the images of $x$ and $y$ by a surjective homomorphism from $H$ to $\Z_{2}$, we see that either $\ker{f\left\lvert_{H}\right.}=\ang{x}$, or if $n'$ is even, additionally $\ker{f\left\lvert_{H}\right.}=\ang{x^{2},y}$, or $\ker{f\left\lvert_{H}\right.}=\ang{x^{2},xy}$. So if either $\ker{f\left\lvert_{H}\right.}$ is equal to $H$, or is an subgroup of $H$ of index $2$, we conclude from these possibilities that $x^{2} \in \ker{f}$. It follows again from the fact that $\Delta_{m}^{2}$ is the unique element of $B_{m}(\RP)$ of order $2$ that $f(x)\in \ang{\Delta_{m}^{2}}$. Since $f(x')$ is of finite order and $f(x)$ is central in $B_{m}(\RP)$, using the fact mentioned above in the first paragraph that $B_{n}(\RP)=\langle x,x' \rangle$, we see that the image of $f$ is finite cyclic as required. In particular, in the outstanding case of the proof of part~\ref{it:rpa}\ref{it:rpaii}, where $n=4$ and $m=3$, there is no surjective homomorphism from $B_{4}(\RP)$ to $B_{3}(\RP)$.\qedhere
\end{enumerate}

\end{proof}


%

\begin{cor}\label{cor:purenor} 
Let $g\geq 1$, and let $m,n\in \N$. 
Then there exists a surjective homomorphism of $B_n(U_g)$ onto $P_m(U_g)$ if and only if either $g=m=1$ or $n=m=1$.
\end{cor}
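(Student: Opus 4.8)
The plan is to mimic the proofs of \reco{pure} and \reco{pureboundary}, using as key inputs \repr{ornor}, \reth{gensurj}\ref{it:main1b} (in the non-orientable case), the computation of $(P_m(U_g))_{\text{Ab}}$ from a presentation of $P_m(U_g)$, and the lower central series data of \reth{gam3nor} (together with \repr{b2nor} for the case $n=2$). First I would prove sufficiency: if $n=m=1$ then $B_1(U_g)=\pi_1(U_g)=P_1(U_g)$, so the identity works; if $g=m=1$, then $P_1(\RP)=B_1(\RP)\cong\Z_2$, and by \rerems{rp2}\ref{it:rp2b} there is a surjective homomorphism $B_n(\RP)\to B_1(\RP)=P_1(\RP)$ via Abelianisation for every $n\geq 2$ (and trivially for $n=1$).

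For necessity, suppose $\Phi\colon\thinspace B_n(U_g)\to P_m(U_g)$ is surjective, with $(n,m)\neq(1,1)$ and $(g,m)\neq(1,1)$; I want to derive a contradiction. The homomorphism $\Phi$ induces a surjection on Abelianisations. Using a presentation of $P_m(U_g)$ (as in~\cite{B}), one computes that $(P_m(U_g))_{\text{Ab}}$ is isomorphic to $\Z^{(g-1)m}\oplus\Z_2^{\,?}$ — more precisely the free-abelian rank is $(g-1)m$ and there is some $2$-torsion whose precise form I would extract from the presentation; the key quantitative fact I need is that the torsion-free rank of $(P_m(U_g))_{\text{Ab}}$ equals $(g-1)m$. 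Comparing with $(B_n(U_g))_{\text{Ab}}$, whose torsion-free rank is $g-1$ by \reth{gam3nor}\ref{it:g1sigmaor} (for $n\geq 2$) and also $g-1$ for $n=1$ since $B_1(U_g)$ is a one-relator group with abelianisation of free rank $g-1$, the surjection on abelianisations forces $(g-1)m\leq g-1$. If $g\geq 2$ this gives $m=1$; if $g=1$ this inequality is vacuous and instead the excluded case $(g,m)=(1,1)$ must be handled separately — but that case is precisely the one we assumed away, so for $g=1$ we may invoke \reth{rp}\ref{it:rpa} to rule out $m\geq 2$ directly (there is no surjection $B_n(\RP)\to B_m(\RP)$ when $n>m\geq 2$, when $n<m$ and $n\in\{1,2\}$, and the remaining cases $n\le m$ with $m\ge 3$ are covered since $P_m(\RP)$ surjects onto $B_{m-1}(\RP)$ which is infinite while... ) — I will need to be a little careful here, so see below.

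Once we know $m=1$, we have $P_1(U_g)=\pi_1(U_g)$, and the remaining task is to show there is no surjection $B_n(U_g)\to\pi_1(U_g)$ unless $n=1$, or $g=1$. If $n=1$ we are done. So assume $n\geq 2$. For $n\geq 3$, the group $B_n(U_g)$ is not residually nilpotent by \reth{gam3nor}\ref{it:residor}, whereas $\pi_1(U_g)=B_1(U_g)$ is residually nilpotent (it is a one-relator group of this type; cf.~the remark after \reth{gam3nor} and~\cite{BGe,LA}), and a surjection onto a residually nilpotent group from a group whose residual nilpotence fails is not in itself a contradiction — so instead I will use the sharper invariant as in case~\ref{it:ngrm} of the proof of \reth{gensurj}\ref{it:main1a}\ref{it:main1aii}: by \reth{gam3nor}\ref{it:g2sigmaor}, $\Gamma_2(B_n(U_g))/\Gamma_3(B_n(U_g))$ is trivial for $n\geq 3$, while $\Gamma_2(\pi_1(U_g))/\Gamma_3(\pi_1(U_g))$ is a non-trivial (indeed infinite, for $g\geq 2$) abelian group by~\cite{LA}, so no surjection exists. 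This disposes of $n\geq 3$ for all $g$. For $n=2$ and $g\geq 2$: if $g\geq 3$, \repr{b2nor} shows $\Gamma_2(B_2(U_g))/\Gamma_3(B_2(U_g))$ is finite, while $\Gamma_2(\pi_1(U_g))/\Gamma_3(\pi_1(U_g))$ is infinite by~\cite{LA}; if $g=2$, I would use \relem{garrafe} exactly as in the proof of \reth{gensurj}\ref{it:main1b} case~(ii) for $U_2$: applying $\Phi$ to relation~\reqref{rr} with $i=1$ forces $\Phi(\sigma_1)=1$, then relation~\reqref{rs2} forces $\Phi(\rho_1)$ and $\Phi(\rho_2)$ to commute, so $\im\Phi$ is abelian, contradicting that $\pi_1(U_2)$ is non-abelian. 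Finally, for $n=2$ and $g=1$ we are in the excluded/allowed case $g=m=1$, consistent with the statement.

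The main obstacle I anticipate is the bookkeeping for $g=1$, i.e.\ $U_1=\RP$: there the abelianisation rank argument is vacuous, so I must instead argue that a surjection $B_n(\RP)\to P_m(\RP)$ forces $m=1$ by other means. For $m\geq 2$, $P_m(\RP)$ is a non-trivial finite or infinite group — in fact $P_1(\RP)\cong\Z_2$, $P_2(\RP)\cong \mathcal{Q}_{16}$, and $P_m(\RP)$ is infinite for $m\geq 3$~\cite{VB} — while $B_m(\RP)$ surjects onto $P_m(\RP)$... no, that is the wrong direction. The correct approach: if there is a surjection $B_n(\RP)\to P_m(\RP)$ with $m\geq 2$, compose with a suitable forgetful-type map or use that $P_m(\RP)$ surjects onto $B_{m-1}(\RP)$ (forgetting one strand lands in $B_{m-1}(\RP)$ after quotienting, more carefully $P_m(\RP)\twoheadrightarrow P_{m-1}(\RP)$ and $P_{m-1}(\RP)\hookrightarrow$ — again directions must be watched). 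Cleanest: observe $P_m(\RP)$ has a quotient isomorphic to $B_1(\RP)^{?}$; alternatively just cite that $(P_m(\RP))_{\text{Ab}}$ has order a power of $2$ and grows with $m$ — specifically it is an elementary-abelian or similar $2$-group whose rank exceeds $2=\operatorname{rank}(B_n(\RP))_{\text{Ab}}$ once $m\geq 3$, handling $m\geq 3$, and $P_2(\RP)=\mathcal{Q}_{16}$ is non-abelian with $(\mathcal Q_{16})_{\text{Ab}}\cong\Z_2\oplus\Z_2$, so abelianisations do not obstruct $m=2$ and one needs \reth{rp}\ref{it:rpb} or a torsion argument: $B_n(\RP)$ for $n$ large has torsion elements of $2$-power order exceeding anything in $\mathcal{Q}_{16}$, but that is automatically compatible... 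In the end I expect the $g=1$, $m=2$ subcase to require the dicyclic/quaternion subgroup analysis of \reth{rp}\ref{it:rpb}, adapted from $B_m(\RP)$ to $P_m(\RP)=P_2(\RP)$, showing $\im\Phi$ is cyclic hence not all of $\mathcal Q_{16}$. This is the step I would flag as the genuine difficulty.
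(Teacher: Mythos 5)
Your route is essentially the paper's: the sufficiency argument is identical (identity for $n=m=1$, Abelianisation onto $B_1(\RP)=P_1(\RP)\cong\Z_2$ for $g=m=1$, via \rerems{rp2}\ref{it:rp2b}), and for necessity the paper likewise compares Abelianisations using $(P_m(U_g))_{\text{Ab}}\cong\Z^{(g-1)m}\oplus\Z_2^{m}$ to force $m=1$, and then disposes of the remaining case $n>1$, $g>1$, $m=1$ simply by citing \reth{gensurj}\ref{it:main1b}\ref{it:main1bii} rather than re-proving it as you do. Your re-derivation of that step is correct but redundant (and your parenthetical claim that $\Gamma_2(\pi_1(U_g))/\Gamma_3(\pi_1(U_g))$ is infinite for all $g\ge 2$ fails for the Klein bottle, where this quotient is $\Z_2$; non-triviality is all you need for $n\ge 3$, and you treat $n=2$, $g=2$ via \relem{garrafe} anyway, exactly as the paper does inside the proof of \reth{gensurj}\ref{it:main1b}).

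The genuine gap is the one you flag yourself: $g=1$, $m=2$, $n\ge 2$, where the Abelianisation count ($\Z_2\oplus\Z_2$ onto $\Z_2^{m}$) only yields $m\le 2$, and you leave $m=2$ unresolved. Your sketch for it also starts from a wrong identification: $P_2(\RP)\cong\mathcal{Q}_8$, the quaternion group of order $8$ (it is $B_2(\RP)$ that is the generalised quaternion group of order $16$), and the forgetful maps go $P_m(\RP)\to P_{m-1}(\RP)$, not onto $B_{m-1}(\RP)$. The good news is that no adaptation of the dicyclic-subgroup analysis of \reth{rp}\ref{it:rpb} is needed. For $n\ge 3$, $\Gamma_2(B_n(\RP))=\Gamma_3(B_n(\RP))$ by \reth{gam3nor}\ref{it:g2sigmaor}, so any quotient $H$ satisfies $\Gamma_2(H)=\Gamma_3(H)$; but $\Gamma_2(\mathcal{Q}_8)\cong\Z_2$ is central, so $\Gamma_3(\mathcal{Q}_8)=1\neq\Gamma_2(\mathcal{Q}_8)$ (equivalently, a nilpotent quotient of a group with $\Gamma_2=\Gamma_3$ must be Abelian, and $\mathcal{Q}_8$ is not). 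For $n=2$, a surjection $B_2(\RP)\cong\mathcal{Q}_{16}\to\mathcal{Q}_8$ would have kernel of order $2$, hence equal to the centre, and $\mathcal{Q}_{16}$ modulo its centre is dihedral of order $8$, not $\mathcal{Q}_8$. With that paragraph added your proof is complete; note that the published proof passes over this subcase without comment (it deduces $m=1$ directly from the Abelianisations), so your caution here is well founded --- only your proposed resolution was off target.
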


\begin{proof}
If $n=m=1$ and $g\geq 1$, the result is clear, and if $g=m=1$, the result follows from \rerems{rp2}\ref{it:rp2b}.
Conversely, suppose that there exists a surjective homomorphism $\Phi\colon\thinspace B_n(U_{g})\to P_m(U_{g})$. Then $\Phi$ induces a surjective homomorphism of the corresponding Abelianisations, but since $(P_m(U_{g}))_{\text{Ab}}\cong \Z^{(g-1)m} \oplus \Z_2^{m} $ using a presentation of $P_m(U_{g})$ (see~\cite[Theorem~3]{GGjpaa3} for instance), it follows from \reth{gam3nor}\ref{it:g1sigmaor} and the fact that $(B_{1}(U_{g}))_{\text{Ab}}\cong \Z^{g-1} \oplus \Z_2$ that $m=1$. So either $n=1$ or $g=1$, and thus the conclusion holds, 
or else $n>1$ and $g>1$, in which case we obtain a contradiction using \reth{gensurj}\ref{it:main1b}\ref{it:main1bii}.
\end{proof}

\section{Surjections between braid groups of orientable surfaces and symmetric groups} \label{sec:other}

In this section, we start by recalling \reth{LinTrans}, due to Ivanov~\cite{Iv}, about transitive representations of $B_n$ and $S_m$, where $n>m \geq 2$  (the definitions of primitive and transitive representations were given in \resec{intro}). We then prove~\reth{LinSurfaceTrans} that generalises \reth{LinTrans} to braid groups of compact, orientable surfaces. We shall assume that the surfaces are without boundary, but the results extend easily to the case with boundary.
In~\cite{Iv}, Ivanov gave some transitive, imprimitive representations of $B_n(\Sigma_g)$ in $S_n$, where $g\geq 1$ and $n\geq 3$. These representations have the property that their images are Abelian subgroups of $S_{n}$. We shall construct some transitive, imprimitive representations of $B_n(\Sigma_g)$ in $S_m$ whose images are non Abelian, so they are different from those of Ivanov. 

The following result is a variant of \reth{Lin} for transitive representations.

\begin{thm}[{\cite[Lemma~3]{Iv}}]\label{th:LinTrans}
Let $n>m \geq 2$, and let $\rho\colon\thinspace B_n  \to S_m$  be a transitive representation. Then one of the following statements holds:
\begin{enumerate}
\item\label{it:LinTransa} $\rho(\sigma_1)= \cdots =\rho(\sigma_{n-1})$, and this permutation is an $m$-cycle.
\item\label{it:LinTransb} if $n=4$ and $m=3$,
up to a suitable renumbering of the elements of the set $\brak{1, 2, 3}$, 
$\rho(\sigma_1)=\rho(\sigma_3)=(1,2)$ and $\rho(\sigma_2)=(2,3)$.
\end{enumerate}
\end{thm}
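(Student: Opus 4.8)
The plan is to derive the statement from Lin's Theorem~A (\reth{Lin}), supplemented by two elementary observations and, for the single pair $(n,m)=(4,3)$ that lies outside the scope of \reth{Lin}, a direct analysis inside $S_3$.

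I would begin by recording the two facts used throughout. First, the standard generators $\sigma_1,\ldots,\sigma_{n-1}$ of $B_n$ are all conjugate: from $\sigma_i\sigma_{i+1}\sigma_i=\sigma_{i+1}\sigma_i\sigma_{i+1}$ one checks that $(\sigma_i\sigma_{i+1})\sigma_i(\sigma_i\sigma_{i+1})^{-1}=\sigma_{i+1}$, and conjugacy of consecutive generators propagates to all of them. Hence $\rho(\sigma_1),\ldots,\rho(\sigma_{n-1})$ always share a common cycle type, and whenever $\rho(B_n)$ is abelian they must coincide. Second, a cyclic subgroup of $S_m$ acts transitively on $\brak{1,\ldots,m}$ if and only if it is generated by an $m$-cycle. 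With these in hand the case $m=2$ is immediate: $S_2$ is abelian, so the $\rho(\sigma_i)$ equal a common element $\tau$, and $\tau\neq\operatorname{Id}$ by transitivity, so $\tau=(1,2)$ is an $m$-cycle and~\ref{it:LinTransa} holds.

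Next I would treat $m\geq 3$ with $(n,m)\neq(4,3)$. Since $n>m\geq 3$ forces $n\geq 5$ when $m=3$, and $n\geq m+1\geq 5$ otherwise, we are in the situation $n>m\geq 3$, $n\neq 4$, so \reth{Lin} applies and $\rho(B_n)$ is cyclic, in particular abelian. Therefore $\rho(\sigma_1)=\cdots=\rho(\sigma_{n-1})=:\tau$ generates $\rho(B_n)$, and transitivity of $\rho$ forces $\langle\tau\rangle$ to be transitive, i.e.\ $\tau$ to be an $m$-cycle; this is~\ref{it:LinTransa}.

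The remaining case $(n,m)=(4,3)$ is the one I expect to require genuine (if short) work, since \reth{Lin} explicitly excludes it; here I would argue directly in $S_3$. The image $\rho(B_4)$ is a transitive subgroup of $S_3$, hence either $A_3$ or all of $S_3$. If $\rho(B_4)=A_3$ it is abelian, so the three $\rho(\sigma_i)$ agree and generate $A_3$, hence equal a single $3$-cycle, giving~\ref{it:LinTransa}. If $\rho(B_4)=S_3$, the $\rho(\sigma_i)$ have a common cycle type and cannot all be $3$-cycles (these generate a subgroup of $A_3$), so they are transpositions; the relation $\sigma_1\sigma_3=\sigma_3\sigma_1$ forces $\rho(\sigma_1)$ and $\rho(\sigma_3)$ to commute, and two transpositions of $S_3$ commute only if equal, so $\rho(\sigma_1)=\rho(\sigma_3)$; moreover $\rho(\sigma_2)\neq\rho(\sigma_1)$, since otherwise $\rho(B_4)$ would be generated by a single transposition and fail to be transitive. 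Finally, any two distinct transpositions of $S_3$ satisfy the braid relation (their product $sts=tst$ is the third transposition), so all defining relations of $B_4$ are respected, and after renumbering $\brak{1,2,3}$ we may arrange $\rho(\sigma_1)=\rho(\sigma_3)=(1,2)$ and $\rho(\sigma_2)=(2,3)$, which is~\ref{it:LinTransb}. The substantive input is entirely carried by \reth{Lin}; what is left is the bookkeeping above together with the small computations in $S_3$, and the only place needing care is precisely the exceptional pair $(4,3)$.
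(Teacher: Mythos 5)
Your proof is correct and follows essentially the same route as the paper: Lin's theorem (\reth{Lin}) handles all cases except $(n,m)=(4,3)$, which is then settled by a short computation in $S_3$ ending with the observation that commuting transpositions of $S_3$ coincide. The only cosmetic difference is in the exceptional case, where you get the common cycle type of the $\rho(\sigma_i)$ from the conjugacy of the braid generators and split on the image being $A_3$ or $S_3$, whereas the paper establishes the common cycle type by a signature argument and splits on the cycle type itself; both variants amount to the same small amount of work.
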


We can give an alternative  proof of \reth{LinTrans} using \reth{Lin} due to Lin.

\begin{proof}[Proof of \reth{LinTrans}]
Suppose that $n>m \geq 2$. If $m=2$ then $\im{\rho}$ is either trivial, which contradicts the transitivity hypothesis, or is equal to $S_{2}$, and  statement~\ref{it:LinTransa} holds. So suppose that $n>m \geq 3$, and assume that $n\neq 4$. Arguing as in the first part of the proof of \reth{LinSurfcomb} in \resec{suror}, it follows that $\rho(\sigma_1)= \cdots =\rho(\sigma_{n-1})$, and the fact that $\rho$ is transitive implies that the permutation $\rho(\sigma_1)$ of $S_{m}$ is an $m$-cycle, so once more statement~\ref{it:LinTransa} holds. Finally, assume that $(n,m)=(4,3)$. We claim that $\rho(\sigma_1)$ and $\rho(\sigma_2)$ have the same cycle type. To see this, first note that if one of $\rho(\sigma_1)$ or $\rho(\sigma_2)$ is equal to the identity permutation then the Artin relations imply that the other is also equal to the identity, which proves the claim in this case. So suppose that  one of these two elements is a transposition and the other is a $3$-cycle. Then $\rho(\sigma_1 \sigma_2 \sigma_1)$ and $\rho(\sigma_2 \sigma_1 \sigma_2)$ have opposite signatures, which yields a contradiction using the Artin relations, and proves the claim.
It follows in a similar manner that $\rho(\sigma_2)$ and $\rho(\sigma_3)$ have the same cycle type, hence $\rho(\sigma_1)$, $\rho(\sigma_2)$ and $\rho(\sigma_3)$ all have the same cycle type. By the transitivity hypothesis, they cannot be equal to the identity permutation, and they cannot be equal to the same transposition. So we are reduced to analysing the following two cases:
\begin{enumerate}[label=\textit{(\roman*)}]
\item $\rho(\sigma_1)$, $\rho(\sigma_2)$ and $\rho(\sigma_3)$ are transpositions. Since $\sigma_{1}$ and $\sigma_{3}$ commute, it follows that $\rho(\sigma_1)=\rho(\sigma_3)$, and the fact that $\rho(\sigma_1)$, $\rho(\sigma_2)$ and $\rho(\sigma_3)$ do not coincide implies that the condition given in part~\ref{it:LinTransb} is satisfied.

\item $\rho(\sigma_1)$, $\rho(\sigma_2)$ and $\rho(\sigma_3)$ are $3$-cycles. Using the Artin relations, it follows that $(\rho(\sigma_1))^{-1}\neq \rho(\sigma_2)$, so $\rho(\sigma_1)=\rho(\sigma_2)$. In a similar fashion, $\rho(\sigma_2)=\rho(\sigma_3)$, and thus the condition given in part~\ref{it:LinTransa} is satisfied.\qedhere
\end{enumerate}
\end{proof} 



We now recall the following result of~\cite{BGP} about the structure of the centraliser $C_{S_m}(u)$ of a permutation $u$ in $S_{m}$. Note that $C_{S_m}(u)$ is equal to the centraliser $C_{S_m}(\ang{u})$ of the subgroup $\ang{u}$ in $S_{m}$. If $k\in \N$, let $C_k$ denote the cyclic group of order $k$.

\begin{prop}[{\cite[Lemma~1.1]{BGP}}]\label{prop:berrickparis}
Let $u \in S_m$ be a permutation whose cycle type is equal to $(1)^{\ell_1} (2)^{\ell_2}\ldots(m)^{\ell_m}$, and let $I(u) = \setl{k \in \brak{1,2,\ldots,m}}{\ell_k > 0}$, so that $\Sigma_{k\in I(u)}\, k\ell_k =m$. 
Then the centraliser $C_{S_m}(u)$ of $u$ in $S_{m}$ is isomorphic to $\prod_{k\in I(u)}\, C_k^{\ell_k}\rtimes  S_{\ell_k}= \prod_{k\in I(u)}\, C_k \wr S_{\ell_k}$.
\end{prop}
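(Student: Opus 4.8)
The plan is to establish this classical description of $C_{S_m}(u)$ by two successive reductions: first to a single cycle length, then to a single cycle.

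First I would partition $\brak{1,\ldots,m}$ into blocks $\Omega_k$, one for each $k\in I(u)$, where $\Omega_k$ is the union of the supports of the $\ell_k$ cycles of $u$ of length $k$, so that $\lvert\Omega_k\rvert=k\ell_k$ and $\brak{1,\ldots,m}$ is their disjoint union. The key observation is that any $g\in C_{S_m}(u)$ satisfies $gug^{-1}=u$, hence conjugation by $g$ carries each cycle of $u$ to a cycle of $u$ \emph{of the same length}; consequently $g$ preserves each block $\Omega_k$. Writing $u_k=u\lvert_{\Omega_k}$, this gives an isomorphism $C_{S_m}(u)\cong\prod_{k\in I(u)}C_{\operatorname{Sym}(\Omega_k)}(u_k)$, and it suffices to identify each factor with $C_k\wr S_{\ell_k}$.

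Next, fixing $k$ and writing $\ell=\ell_k$, I would label the support of $u_k$ as $\set{x_{i,j}}{1\le i\le\ell,\ 0\le j\le k-1}$ with $u_k(x_{i,j})=x_{i,\,j+1\bmod k}$, so that the cycles of $u_k$ are $C^{(i)}=\brak{x_{i,0},\ldots,x_{i,k-1}}$ for $1\le i\le\ell$. An element $g$ of the centraliser permutes the set $\brak{C^{(1)},\ldots,C^{(\ell)}}$, which defines a homomorphism $\theta\colon\thinspace C_{\operatorname{Sym}(\Omega_k)}(u_k)\to S_\ell$. Its kernel consists of the $g$ that preserve each $C^{(i)}$ setwise and commute with $u_k$; restricted to $C^{(i)}$ such a $g$ lies in the centraliser of a single $k$-cycle inside the symmetric group on the $k$ points of $C^{(i)}$, which has order $k$ (there being $(k-1)!$ many $k$-cycles) and therefore equals the cyclic subgroup generated by $u_k\lvert_{C^{(i)}}$. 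Hence $\ker\theta\cong C_k^{\ell}$. To split $\theta$, for $\tau\in S_\ell$ I would set $\widehat\tau(x_{i,j})=x_{\tau(i),j}$; then $\widehat\tau$ commutes with $u_k$, the assignment $\tau\mapsto\widehat\tau$ is a homomorphic section of $\theta$, and conjugating the $i$-th factor $\ang{u_k\lvert_{C^{(i)}}}$ of $\ker\theta$ by $\widehat\tau$ yields the $\tau(i)$-th factor. Thus $C_{\operatorname{Sym}(\Omega_k)}(u_k)$ is the semidirect product of $\ker\theta\cong C_k^{\ell}$ by the image of the section, with $S_\ell$ acting by permuting the $\ell$ factors, which is precisely the wreath product $C_k\wr S_{\ell}$. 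Combined with the first reduction, this gives $C_{S_m}(u)\cong\prod_{k\in I(u)}C_k\wr S_{\ell_k}$.

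I do not expect a serious obstacle, the result being standard; the only two points requiring a little care are (i) the observation that a centraliser element respects cycle lengths and hence the block decomposition — immediate from $gug^{-1}=u$, but it is exactly what makes the argument work — and (ii) the verification in the final step that the semidirect product realises the \emph{full} wreath product rather than merely some extension, i.e.\ the computation $\widehat\tau\,\bigl(u_k\lvert_{C^{(i)}}\bigr)\,\widehat\tau^{-1}=u_k\lvert_{C^{(\tau(i))}}$ identifying the $S_\ell$-action with the permutation action on $C_k^{\ell}$.
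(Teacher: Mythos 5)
Your argument is correct and complete: the reduction to a single cycle length via the blocks $\Omega_k$, the homomorphism $\theta$ onto $S_{\ell_k}$ with kernel $C_k^{\ell_k}$, the explicit section $\tau\mapsto\widehat\tau$, and the verification that conjugation by $\widehat\tau$ permutes the cyclic factors are exactly the standard proof of this classical fact, and each step is justified. Note, however, that the paper does not prove this statement at all; it is quoted directly from the literature (Lemma~1.1 of the cited work of Berrick, Gebhardt and Paris), with only a following remark describing the $S_{\ell_k}$-action on the copies of $C_k$ and the invariance of the cycle partition -- both of which agree with what you establish, so your write-up in effect supplies the proof the paper omits. The only cosmetic point is your use of the word ``support'' for $1$-cycles: for $k=1$ the block $\Omega_1$ is the fixed-point set of $u$, which is what you intend, but ``support'' conventionally excludes fixed points, so a word of clarification there would be worthwhile.
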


 In the semi-direct product $\prod_{k\in I(u)}\, C_k^{\ell_k}\rtimes  S_{\ell_k}$ given in the statement of \repr{berrickparis}, the action of an element $\tau$ of $S_{\ell_k}$ is given by indexing the copies of $C_{k}$ by $\brak{1,\ldots, \ell_k}$, and by sending a given element of $C_{k}$ to the corresponding element of $C_{\tau(k)}$. Further, the partition associated with the cycle decomposition of $u$ is left invariant by the elements of $C_{S_m}(u)$. 


Let $n>m \geq 1$, and let $\rho_{n,m}\colon\thinspace  B_n(\Sigma_g) \to S_m$ be a representation.  Considering $B_{n}$ to be a subgroup of $B_n(\Sigma_g)$ induced by the inclusion of a topological disc in $\Sigma_{g}$, by abuse of notation, we also denote the restriction of $\rho_{n,m}$ to $B_n$ by $\rho_{n,m}$. We now prove \reth{LinSurfaceTrans} that generalises \reth{LinTrans}. 

\begin{proof}[Proof of \reth{LinSurfaceTrans}] Suppose that $n>m\geq 2$. If $m=2$ then $\im{\rho_{n,2}}$ is either equal to $\brak{\operatorname{\text{Id}}}$ or is isomorphic to $\Z_{2}$, and statement~\ref{it:LinSurfaceTransa} of the theorem holds. So assume that $n>m\geq3$, and suppose additionally that $n\neq 4$. Since $m\geq 3$, $\im{\rho_{n,m}} \neq \brak{\operatorname{\text{Id}}}$. By considering the composition of $\rho_{n,m}$ with the inclusion of $B_{n}$ in $B_{n}(\Sigma_{g})$, we see as in the proof of \reth{LinSurfcomb} in \resec{suror} that $\rho_{n,m}(\sigma_1)= \cdots =\rho_{n,m}(\sigma_{n-1})$.
We denote this common element of $S_{m}$ by $\sigma$. Relation~(\ref{eq:cs}) implies that $\sigma$ commutes with $\rho_{n,m}(a_i)$ and with $\rho_{n,m}(b_i)$ for all $1\leq i\leq g$. So if 
$\sigma$ is an $m$-cycle then $\rho_{n,m}(a_i)$ and $\rho_{n,m}(b_i)$ are powers of $\sigma$ for all $1\leq i\leq g$, in which case $\im{\rho_{n,m}}$ 
is generated by $\sigma$, and statement~\ref{it:LinSurfaceTransa} of the theorem holds. 
So assume that $\sigma$ is not an $m$-cycle. Then the decomposition of $\sigma$ as a product of disjoint cycles gives rise to a partition of the set $\{1, \ldots , m\}$ that is different from the set $\{1, \ldots , m\}$ itself  and that is invariant under the action of $\sigma$. Since $\rho_{n,m}(a_i)$ and $\rho_{n,m}(b_i)$ commute with $\sigma$ for all $1\leq i\leq g$, they also leave this partition invariant, 
and it follows from the hypothesis that $\rho_{n,m}$ is primitive that $\sigma$ is the identity permutation. Thus $\rho_{n,m}$ factors through the quotient $B_n(\Sigma_g)/\langle\!\langle \brak{\sigma_{1}, \ldots, \sigma_{n-1}}\rangle\!\rangle$ of $B_{n}(\Sigma_{g})$ by the normal closure $\langle\!\langle \brak{\sigma_{1}, \ldots, \sigma_{n-1}}\rangle\!\rangle$ of $\brak{\sigma_{1}, \ldots, \sigma_{n-1}}$ in $B_{n}(\Sigma_{g})$, and induces a homomorphism $\overline{\rho}_{n,m}\colon\thinspace B_n(\Sigma_g)/\langle\!\langle \brak{\sigma_{1}, \ldots, \sigma_{n-1}}\rangle\!\rangle \to S_m$. But from the proof of \repr{mingen}, $B_n(\Sigma_g)/\langle\!\langle \brak{\sigma_{1}, \ldots, \sigma_{n-1}}\rangle\!\rangle$ is isomorphic to $\Z^{2g}$. So $\im{\overline{\rho}_{n,m}}=\im{\rho_{n,m}}$ is non trivial and Abelian, and $\overline{\rho}_{n,m}$ is primitive. 
Since $\im{\overline{\rho}_{n,m}}$ is Abelian, any non-trivial element $u\in \im{\overline{\rho}_{n,m}}$ commutes with all of the elements of $\im{\overline{\rho}_{n,m}}$, from which we see that $\im{\overline{\rho}_{n,m}}$ is contained in the centraliser of $u$ in $S_{m}$. If $u$ is an $m$-cycle then $\im{\overline{\rho}_{n,m}}$ coincides with $C_{S_{m}}(u)$, which is equal to $\ang{u}$, and thus part~\ref{it:LinSurfaceTransa} of the statement holds. So assume that $\im{\overline{\rho}_{n,m}}$ contains no $m$-cycle. Then the cycle decomposition of $u$ contains a non-trivial cycle of length strictly less than $m$, so by \repr{berrickparis}, $C_{S_{m}}(u)$ is imprimitive. But since $\im{\overline{\rho}_{n,m}}\subset C_{S_{m}}(u)$, this implies that $\overline{\rho}_{n,m}$ is also imprimitive, which yields a contradiction. This argument also  implies   that $m$ has to  be prime, and that $u$ is an $m$-cycle. 
 This completes the proof of the case $n>m\geq 3$ and $n\neq 4$.

Finally, let $n=4$ and $m=3$. Suppose first that the restriction of the representation $\rho_{4,3} \colon\thinspace B_{4}(\Sigma_{g})\to S_{3}$ to $B_{4}$ is intransitive. Thus $\rho_{4,3}(B_4)$ is equal to a subgroup of $S_{3}$ of order $1$ or $2$, and in either case, it follows that $\rho_{4,3}(\sigma_1)=\rho_{4,3}(\sigma_2)=\rho_{4,3}(\sigma_3)$ using the Artin relations~\reqref{artin1} and~\reqref{artin2}. We denote this element by $\sigma$. As in the discussion of the previous paragraph of the case where $\sigma$ is not an $m$-cycle for $n>m\geq 3$ and $n\neq 4$, 
we obtain a contradiction. Therefore the restriction of the representation $\rho_{4,3}$ to $B_{4}$ is transitive, and by \reth{LinTrans}, we just have to consider the following two cases.

\begin{enumerate}[label=\textit{(\roman*)}]
\item $\rho_{4,3}(B_4)$ 
is generated by a $3$-cycle, and $\rho_{4,3}(\sigma_1)=\rho_{4,3}(\sigma_2)=\rho_{4,3}(\sigma_3)$. Using once more relation~(\ref{eq:cs}) of \reth{presbng}, we see that $\rho_{4,3}(a_i)$ and $\rho_{4,3}(b_{i})$ commute with the $3$-cycle $\rho_{4,3}(\sigma_1)$ for all $1\leq i \leq g$, so they are powers of $\rho_{4,3}(\sigma_1)$. Thus $\im{\rho_{4,3}}=\ang{\rho_{4,3}(\sigma_1)}$, and hence part~\ref{it:LinSurfaceTransa} of the statement holds. 

\item Up to a suitable renumbering of the elements of the set  $\brak{1, 2, 3}$, $\rho_{4,3}(\sigma_1)=\rho_{4,3}(\sigma_3)=(1,2)$ and $\rho_{4,3}(\sigma_2)=(2,3)$. Relation~(\ref{eq:cs}) of \reth{presbng} implies once more that $\rho_{4,3}(a_i)$ and $\rho_{4,3}(b_{i})$ commute with the elements $\rho_{4,3}(\sigma_2)$ and $\rho_{4,3}(\sigma_3)$ for all $1\leq i\leq g$. Since these transpositions generate $S_{3}$, it follows that the permutations $\rho_{4,3}(a_i)$ and $\rho_{4,3}(b_{i})$ belong to the centre of $S_{3}$  for all $1\leq i\leq g$, so are trivial. Hence part~\ref{it:LinSurfaceTransb} of the statement holds.\qedhere
\end{enumerate}
\end{proof}

\begin{rem}
Using the methods of the proof of \reth{LinSurfaceTrans} and the presentation given by~\cite[Proposition~3.1]{BGoG}, the statement of \reth{LinSurfaceTrans} also holds if the surface has boundary.
\end{rem}

We may obtain some information about an arbitrary representation $\rho_{n,m}\colon\thinspace B_n(\Sigma_g)\to S_m$ in a more general setting. 


\begin{prop}\label{prop:contraints}
Let $g\geq 1$, let $n>m\geq 2$, and assume that $(n,m)\neq (4,3)$.
Suppose that $\rho_{n,m}\colon\thinspace B_n(\Sigma_g)\to S_m$ is a homomorphism, and let $\rho_{n,m}(B_n)$ be the image of the subgroup $B_{n}$ of $B_n(\Sigma_g)$ under $\rho_{n,m}$.
\begin{enumerate}
\item\label{it:contraintsa} The subgroup $\rho_{n,m}(B_n)$ of $S_{m}$ is cyclic, and therefore $\rho_{n,m}(\sigma_1)=\cdots =\rho_{n,m}(\sigma_{n-1})$.
\item\label{it:contraintsb} The subgroup $\im{\rho_{n,m}}$
is contained in the centraliser $C_{S_m}(\rho_{n,m}(B_n))$ of $\rho_{n,m}(B_n)$ in $S_{m}$. 
This centraliser is described by \repr{berrickparis}. 
\item\label{it:contraintsc} There is an inclusion $\Gamma_3(B_n(\Sigma_g)) \subset \ker{\rho_{n,m}}$, so the homomorphism $\rho_{n,m}$ factors through the quotient $B_n(\Sigma_g)/\Gamma_3(B_n(\Sigma_g))$, and the subgroup $\im{\rho_{n,m}}$ is nilpotent of nilpotency degree at most $2$.
\end{enumerate}
\end{prop}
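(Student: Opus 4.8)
The plan is to deduce all three parts from the presentation in \reth{presbng} together with Lin's \reth{Lin}; beyond organising the relations correctly, no new ideas are needed. For part~\ref{it:contraintsa} I would first reduce to Lin's theorem. If $m=2$ then $S_{2}\cong\Z_{2}$ is cyclic and $\rho_{n,m}(B_n)$ is trivially cyclic. If $m\ge 3$, then since $n>m$ and $(n,m)\ne(4,3)$ we must have $n\ne 4$, so \reth{Lin} applies to the restriction of $\rho_{n,m}$ to the subgroup $B_n\le B_n(\Sigma_g)$ and shows that $\rho_{n,m}(B_n)$ is cyclic. In either case the permutations $\rho_{n,m}(\sigma_1),\dots,\rho_{n,m}(\sigma_{n-1})$ commute pairwise; feeding this into the braid relation~\reqref{artin2} gives $\rho_{n,m}(\sigma_i)^2\rho_{n,m}(\sigma_{i+1})=\rho_{n,m}(\sigma_{i+1})^2\rho_{n,m}(\sigma_i)$, hence $\rho_{n,m}(\sigma_i)=\rho_{n,m}(\sigma_{i+1})$ for all $i$, exactly as in the proof of \reth{LinSurfcomb}. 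Denote this common permutation by $\sigma$, so that $\rho_{n,m}(B_n)=\ang{\sigma}$.

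For part~\ref{it:contraintsb}, since $n>m\ge 2$ we have $n\ge 3$, so $\sigma_{2}$ is one of the generators of \reth{presbng} and relation~\reqref{cs} (taken with $j=2$) shows that $\rho_{n,m}(a_i)$ and $\rho_{n,m}(b_i)$ commute with $\rho_{n,m}(\sigma_{2})=\sigma$ for every $i$. As $\im{\rho_{n,m}}$ is generated by $\sigma$ together with the elements $\rho_{n,m}(a_i),\rho_{n,m}(b_i)$, all of which commute with $\sigma$, it follows that $\im{\rho_{n,m}}\subseteq C_{S_m}(\sigma)=C_{S_m}(\ang{\sigma})=C_{S_m}(\rho_{n,m}(B_n))$, and the structure of this centraliser is given by \repr{berrickparis}. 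In particular $\ang{\sigma}$ lies in the centre of $\im{\rho_{n,m}}$.

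For part~\ref{it:contraintsc}, I would then read off the remaining commutators among the generators of $\im{\rho_{n,m}}$, keeping in mind that $\sigma$ is now central. Relation~\reqref{cicj} yields $\rho_{n,m}(c_i)\rho_{n,m}(c_j)=\rho_{n,m}(c_j)\rho_{n,m}(c_i)$ whenever $i\ne j$ (for $c_i\in\{a_i,b_i\}$, $c_j\in\{a_j,b_j\}$), while relation~\reqref{ab}, after cancelling the central factors of $\sigma$, becomes $\rho_{n,m}(a_i)\rho_{n,m}(b_i)=\rho_{n,m}(b_i)\rho_{n,m}(a_i)\,\sigma^{2}$; relations~\reqref{cici} and~\reqref{tot} hold automatically, the latter only constraining the order of $\sigma$. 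Hence every commutator of two generators of $\im{\rho_{n,m}}$ lies in the central subgroup $\ang{\sigma}$, so $\im{\rho_{n,m}}/\ang{\sigma}$ is abelian and $\Gamma_2(\im{\rho_{n,m}})\subseteq\ang{\sigma}\subseteq Z(\im{\rho_{n,m}})$; therefore $\Gamma_3(\im{\rho_{n,m}})=1$, i.e.\ $\im{\rho_{n,m}}$ is nilpotent of class at most $2$. Since $\rho_{n,m}$ maps onto its image, $\rho_{n,m}(\Gamma_3(B_n(\Sigma_g)))=\Gamma_3(\im{\rho_{n,m}})=1$, so $\Gamma_3(B_n(\Sigma_g))\subseteq\ker{\rho_{n,m}}$ and $\rho_{n,m}$ factors through $B_n(\Sigma_g)/\Gamma_3(B_n(\Sigma_g))$.

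The only external ingredient is Lin's \reth{Lin}, used in part~\ref{it:contraintsa}; everything afterwards is bookkeeping with the relations of \reth{presbng}, so there is no real obstacle. The one point to watch is the case split in part~\ref{it:contraintsa}: the hypothesis $(n,m)\ne(4,3)$ is exactly what keeps us either in the range where Lin's theorem applies or in the trivial case $m=2$, and it is consistent with \reth{LinSurfaceTrans}\ref{it:LinSurfaceTransb} that the cyclicity conclusion fails precisely for $(n,m)=(4,3)$.
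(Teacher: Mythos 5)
Your argument is correct, and for parts \emph{(a)} and \emph{(b)} it is exactly the paper's route: restrict to the Artin subgroup $B_n$, note that $(n,m)\neq(4,3)$ together with $n>m\geq 3$ forces $n\neq 4$ so that \reth{Lin} applies (the case $m=2$ being trivial), upgrade commutativity to $\rho_{n,m}(\sigma_1)=\cdots=\rho_{n,m}(\sigma_{n-1})$ via the relation~\reqref{artin2}, and then use relation~\reqref{cs} to place $\im{\rho_{n,m}}$ inside $C_{S_m}(\rho_{n,m}(B_n))$, described by \repr{berrickparis}. Where you genuinely diverge is part \emph{(c)}: the paper first proves $\Gamma_3(B_n(\Sigma_g))\subset\ker{\rho_{n,m}}$ by invoking the external fact (from the proof of Theorem~1(c) of~\cite{BGeG}) that $\Gamma_3(B_n(\Sigma_g))$ is the normal closure of $\sigma_1\sigma_2^{-1}$, which lies in the kernel by part \emph{(a)}, and only then deduces nilpotency of the image from the quotient $B_n(\Sigma_g)/\Gamma_3(B_n(\Sigma_g))$. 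You reverse this order: working directly with the relations~\reqref{cici}, \reqref{ab}, \reqref{cicj}, \reqref{tot} and the centrality of $\sigma$ in the image, you show every commutator of generators of $\im{\rho_{n,m}}$ lies in the central subgroup $\ang{\sigma}$, hence $\Gamma_3(\im{\rho_{n,m}})=1$, and then pull back via $\rho_{n,m}(\Gamma_3(B_n(\Sigma_g)))=\Gamma_3(\im{\rho_{n,m}})$ to get the kernel inclusion. Your version is more self-contained (no appeal to the normal-closure description of $\Gamma_3$) at the cost of a little explicit commutator bookkeeping, which you carry out correctly; the paper's version is shorter given the cited structural result. Both establish the same statement.
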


\begin{proof}
Parts~\ref{it:contraintsa} and~\ref{it:contraintsb} follow from the group presentation of $B_n(\Sigma_g)$ and by repeating the arguments given for instance in the proof of \reth{LinSurfaceTrans}. 
If $n\geq 3$, the first statement of part~\ref{it:contraintsc} is a consequence of part~\ref{it:contraintsa} and the fact that the subgroup $\Gamma_3(B_n(\Sigma_g))$ is isomorphic to the normal closure of the element $\sigma_1 \sigma_2^{-1}$ in $B_n(\Sigma_g)$~\cite[proof of Theorem~1(c)]{BGeG}, which is contained in $\ker{\rho_{n,m}}$ using part~\ref{it:contraintsa}. 
The second statement of part~\ref{it:contraintsc} then follows.
\end{proof}
  
In~\cite[p.~317]{Iv}, Ivanov gave some transitive, imprimitive representations of $B_n(\Sigma_g)$ in $S_n$ for $g\geq 1, n\geq 3$,
and he commented that `I do not know to what extent these examples exhaust the imprimitive representations'. All of the examples he proposed are representations whose images are Abelian. We now describe some imprimitive representations $\rho_{n,m}\colon\thinspace B_n(\Sigma_g) \to S_m$ whose images are non Abelian, so are different from those of Ivanov. 



\begin{exo}\label{ex:exo1}\mbox{}
\begin{enumerate}
\item By~\cite[eq.~(10)]{BGeG}, if $g\geq 1$ and $n\geq 3$, $B_n(\Sigma_g)/\Gamma_3(B_n(\Sigma_g))$ admits the following presentation:
\begin{enumerate}
\item[\textbf{generators:}] $a_1, b_1,\ldots,a_g, b_g$ and $\sigma$.
\item[\textbf{relations:}] $\sigma^{2(n-1+g)}=1$, and the elements of $\brak{a_1, b_1,\ldots,a_g, b_g,\sigma}$ commute pairwise, except for the pairs $(a_i, b_i)_{i=1,\ldots,g}$, for which $[a_1,b_1]=\cdots=[a_g,b_g]=\sigma^2$.
\end{enumerate}
Let $n>2$ be even, and let $g=1$. From the above presentation, we have:
\begin{equation*}
B_n(\T)/\Gamma_3(B_n(\T))=\setangl{a_1, b_1, \sigma}{[a_1,\sigma]=[b_1,\sigma]=1,\,  [a_1, b_1]=\sigma^2,\,  \sigma^{2n}=1}.
\end{equation*}
We define a map $\theta\colon\thinspace B_n(\T)/\Gamma_3(B_n(\T)) \to S_8$ on the generators of $B_n(\T)/\Gamma_3(B_n(\T))$ by: 
\begin{equation*}
\theta (a_1)=(1, 3)(2, 4),\,  \theta (b_1)=(1, 5)(2, 6)(3, 7)(4, 8) \; \text{and}\;   \theta (\sigma)=(1, 2, 3, 4)(5, 6, 7, 8).
\end{equation*}
It is straightforward to check that $\theta$ respects the relations of $B_n(\T)/\Gamma_3(B_n(\T))$, the equality $(\theta(\sigma))^{2n}=1$ being a consequence of the fact that $2n$ is divisible by $4$, so $\theta$ is a homomorphism.
If $p\colon\thinspace B_n(\T) \to B_n(\T)/\Gamma_3(B_n(\T))$ is the canonical projection, then the representation $\theta\circ p\colon\thinspace B_n(\T) \to S_{8}$ is transitive, and it is imprimitive since the non-trivial partition  $\brak{\brak{1,2,3,4}, \brak{5,6,7,8}}$ is preserved by the subgroup $\im{\theta\circ p}$ of $S_{8}$.  
This is perhaps the simplest example of an imprimitive representation $\rho_{n,m}\colon\thinspace  B_n(\Sigma_g) \to S_m$ whose image is non Abelian. In particular, if we take $n=8$, we obtain a transitive, imprimitive representation of $B_8(\T)$ in $S_8$ whose image is non Abelian, so it is not included in the examples of~\cite{Iv}.

\item

 
If $g+n$ is odd and $m=2^{g+2}$, \rex{exo1} may be generalised to construct a homomorphism $\theta_g \colon\thinspace B_n(\Sigma_g)/\Gamma_3(B_n(\Sigma_g))\to S_m$ such that $\im{\theta_{g}}$ is non Abelian. Composing $\theta_g$ with the projection $p_g \colon\thinspace B_n(\Sigma_g) \to B_n(\Sigma_g)/\Gamma_3(B_n(\Sigma_g))$, we thus obtain a homomorphism $\theta_g\circ p_g \colon\thinspace B_n(\Sigma_g) \to  S_m$ such that $\im{\theta_{g}\circ p_{g}}$ is non Abelian. To do so, first let us denote the image by $\theta_g$ of the element $\sigma\in B_n(\Sigma_g)/\Gamma_3(B_n(\Sigma_g))$ given in \rex{exo1} by $\overline{\sigma}\in S_m$. By \repr{contraints}, $\im{\theta_g}$ is contained in the centraliser of $\overline{\sigma}$ in $S_{m}$, which is described in \repr{berrickparis}. Our strategy is to make use of the structure of this centraliser to construct imprimitive representations whose images are non Abelian. 
In \rex{exo1}, the image of $\theta$ is isomorphic to $(\Z_4 \oplus \Z_4)\rtimes \Z_2$, and is the centraliser of $\overline{\sigma}=(1_4,1_4 ; 0_2)$ given by \repr{berrickparis}. In the general case, $n+g$ is odd, $m=2^{g+2}$, and the centraliser of $\overline{\sigma}$ is isomorphic to $\Z_4^{2^g}\rtimes \Z_{2^g}$. We now give two examples of this construction, one in the case where $g$ is odd, and in the other in the case where $g$ is even. 


\begin{enumerate}
\item Suppose that $g=3$, so $m=32$, and $n\geq 4$ is even. Consider $\Z_4^{8}\rtimes S_{8}$, which is interpreted as a subgroup of $S_{32}$. Define the homomorphism $\theta_3 \colon\thinspace B_n(\Sigma_3)/\Gamma_3(B_n(\Sigma_3)) \to S_{32}$ by $\theta_3(a_1)=(2,0,2,0,2,0,2,0)$, $\theta_3(a_2)=(2,2,0,0,2,2,0,0)$, $\theta_3(a_3)=(2,2,2,2,0,0,0,0)$, $\theta_3(\sigma)=(1,1,1,1,1,1,1,1)$, regarded as elements of $S_{32}$, where each factor $1$ denotes the cyclic permutation of length $4$ associated to the four integers corresponding to these four positions, $2$ denotes the square of this cyclic permutation, and $0$ denotes the identity permutation associated to these four integers.   
Finally, let  $\theta_3(b_1)=(1,2)(3,4)(5,6)(7,8)$, $\theta_3(b_2)=(1,3) (2,4)(5,7)(6,8)$, $\theta_3(b_3)=(1,5) (2,6)(3,7)(4,8)$, all regarded as elements of $S_{8}\subset \Z_4^{8}\rtimes S_{8}$.
In terms of explicit elements of $S_{32}$, we have:
\begin{align*}
\theta_3(a_1) =& (1, 3)(2, 4)(9, 11)(10, 12)(17, 19)(18, 20)(25, 27)(26, 28)\\   
\theta_3(a_2) =& (1, 3)(2, 4)(5, 7)(6, 8)(17, 19)(18, 20)(21, 23)(22, 24)\\
\theta_3(a_3) =&  (1, 3)(2, 4)(5, 7)(6, 8)(9, 11)(10, 12)(13, 15)(14, 16)\\
\theta_3(b_1) =& (1, 5)(2, 6)(3, 7)(4, 8) (9, 13 )(10, 14)(11, 15)(12, 16)(17, 21 )(18, 22)(19, 23)(20, 24)\cdot\\
& (25, 29)(26, 30)(27, 31)(28, 32)\\
\theta_3(b_2) =&(1, 9 )(2, 10)(3, 11)(4, 12)    (5, 13 )(6, 14)(7, 15)(8, 16)(17, 25 )(18, 26)(19, 27)(20, 28)\cdot\\
&  (21, 29)(22, 30)(23, 31)(24, 32)\\
\theta_3(b_3) =&(1, 17 )(2, 18)(3, 19)(4, 20) (5, 21 )(6, 22)(7, 23)(8, 24)(9, 25 )(10, 26)(11, 27)(12, 28)\cdot\\
&  (13, 29)(14, 30)(15, 31)(16, 32)\\
\theta_3(\sigma) =&(1, 2, 3, 4)(5, 6, 7, 8)(9, 10, 11, 12)(13, 14, 15, 16)(17, 18, 19,20)(21, 22, 23, 24)\cdot\\
&  (25, 26, 27, 28)(29, 30, 31, 32).
\end{align*}
Using these expressions, we may check that $[\theta_3(a_i),\theta_3(a_j)]=[\theta_3(b_i),\theta_3(b_j)]= [\theta_3(a_i),\theta_3(b_j)]$ for all $1\leq i<j\leq 3$, and $[\theta_3(a_l),\theta_3(b_l)]= (\theta_3(\sigma))^2$ for all $1\leq l\leq 3$, so $\theta_3$ is a homomorphism.
   

\item Now suppose that $g=2$, so $m=16$, and $n\geq 3$ is odd. Consider the subgroup $\Z_4^{4}\rtimes S_{4}$, which we interpret  as  a subgroup of the symmetric group $S_{16}$. We define a homomorphism $\theta_{2,1}\colon\thinspace B_n(\Sigma_2)/\Gamma_3(B_n(\Sigma_2)) \to S_{16}$ 
as follows. Let $\theta_{2,1}(a_1)=(2,0,2,0)$ and $\theta_{2,1}(a_2)=(2,2,0,0)$ in $S_{16}$, 
 where as in the previous example, each factor $1$ denotes the cyclic permutation of length $4$ associated to the integers corresponding to the these four positions, $2$ is the square of this cyclic   permutation, and $0$ is the identity permutation associated to these four integers. We also take $\theta_{2,1}(b_1)=(1,2) (3, 4 )\in S_4$, $\theta_{2,1}(b_2)=(1,3) (2,4)\in S_4$,
  and $\theta_{2,1}(\sigma)=(1,1,1,1) \in S_{16}$.  
Since $n+2$ is odd, $\theta_{2,1}$ defines a homomorphism. In $S_{16}$, the elements are given explicitly by:
\begin{align*}
\theta_{2,1}(a_1) &= (1, 3)(2, 4)(9, 11)(10, 12)\\
\theta_{2,1}(a_2) &= (1, 3)(2, 4)(5, 7)(6, 8)\\
\theta_{2,1}(b_1) &=(1, 5 )(2, 6)(3, 7)(4, 8) (9, 13 )(10, 14)(11, 15)(12, 16)\\
\theta_{2,1}(b_2) &=(1, 9 )(2, 10)(3, 11)(4, 12)(5, 13 )(6, 14)(7, 15)(8, 16)\\
\theta_{2,1}(\sigma) &=(1, 2, 3, 4)(5, 6, 7, 8)(9, 10, 11, 12)(13, 14, 15, 16).
\end{align*}
\end{enumerate}
\end{enumerate}
\end{exo}

\begin{exo}\label{ex:exo2}
    Let $n>2$, and consider the group $\Z_{2n}^n\rtimes S_{n}$ seen as subgroup of $S_{2n^2}$. Let $\theta\colon\thinspace B_{n}(\T)/\Gamma_3(B_{n}(\T)) \to S_{2n^2}$ be the homomorphism defined by $\theta(a_1)=(a, a+2, a+4,\ldots, a-2)\in \Z_{2n}^n$ for $a$ any element of $\Z_{2n}$, $\theta(b_1)=(1, 2, \ldots, n)\in S_n$, and $\theta(\sigma)=(1_{2n},\ldots,1_{2n})\in  \Z_{2n}^n$. It follows that $\theta(\sigma)$ is of order $2n$, $\theta(\sigma)$ commutes with $\theta(a_1)$ and $\theta(b_1)$, and that $\theta([a_1,b_1])=\theta(\sigma)^2$. 
The image of $\theta$ is the subgroup generated by $\brak{\theta(a_1),  \theta(b_1), \theta(\sigma)}$ and the image is non Abelian.
\end{exo} 
 
 
We conclude this paper with the following remarks.

\begin{rems}\mbox{}
\begin{enumerate}
\item The construction of \rex{exo2} also enables us to obtain an example of a homomorphism $\theta\colon\thinspace B_n(\T)/\Gamma_3(B_n(\T)) \to S_m$, where $n>m$, and the order of $\theta(\sigma)$ is equal to $2n$. First note that if $l\geq 3$ and $l$ divides $n$, then it follows from the presentation given at the beginning of \rex{exo1} that the map $\tau_{l} \colon\thinspace B_n(\T)/\Gamma_3(B_n(\T))\to B_l(\T)/\Gamma_3(B_l(\T))$ defined by sending the generators $a_{1}$, $b_{1}$ and $\sigma$ of $B_n(\T)/\Gamma_3(B_n(\T))$ to the generators $a_{1}$, $b_{1}$ and $\sigma$ respectively of $B_l(\T)/\Gamma_3(B_l(\T))$ extends to a (well-defined) surjective homomorphism. Let $n=3\cdot 5 \cdot 7 \cdot 11=1155$. For $l=3,5,7,11$, let $\theta_l \colon\thinspace B_l(\T)/\Gamma_3(B_l(\T)) \to S_{2l^{2}}$ be the homomorphism given as in \rex{exo2}, and let $\theta \colon\thinspace B_{n}(\T)/\Gamma_3(B_{n}(\T)) \to S_{408}$ be defined by $\theta(x)=(\theta_{3}\circ \tau_{3}(x), \theta_{5}\circ \tau_{5}(x), \theta_{7}\circ \tau_{7}(x), \theta_{11}\circ \tau_{11}(x))\in S_{18}\times S_{50}\times S_{98} \times S_{242}$ for all $x\in B_n(\T)/\Gamma_3(B_n(\T))$. Interpreting $S_{18}\times S_{50}\times S_{98} \times S_{242}$ as a subgroup of $S_{408}$, we may thus take $m=408$, and the element $\theta(\sigma)$ is of order $2310$. 


\item Let $g\geq 1$, and let $G$ be the group that admits the following presentation:
\begin{enumerate}
\item[\textbf{generators:}] $a_1, b_1,\ldots,a_g, b_g$ and $\sigma$.
\item[\textbf{relations:}] $\sigma^{2(1+g)}=1$, and the elements of $\brak{a_1, b_1,\ldots,a_g, b_g,\sigma}$ commute pairwise, except for the pairs $(a_i, b_i)_{i=1,\ldots,g}$, for which $[a_1,b_1]=\cdots=[a_g,b_g]=\sigma^2$.
\end{enumerate}
Observe that this is the group obtained by taking $n=2$ in the presentation of the quotient $B_{n}(\Sigma_g)/\Gamma_{3}(B_{n}(\Sigma_g))$ given in \rex{exo1} (we suspect that $B_{2}(\Sigma_g)/\Gamma_{3}(B_{2}(\Sigma_g))$ is not isomorphic to $G$ in this case). Using the presentation of $B_{2}(\Sigma_g)$ given by \reth{presbng}, the map $\rho\colon\thinspace B_{2}(\Sigma_g) \to G$ given by sending the generators $a_i$, $b_i$ and $\sigma_{1}$ of $B_{2}(\Sigma_g)$ to the generators $a_i$, $b_i$ and $\sigma$ respectively of $G$ for all $1\leq i\leq g$ may be seen to extend to a well-defined surjective homomorphism. To check that relation~\reqref{tot} is respected by $\rho$, note that in $G$:
\begin{equation*}
\prod_{i=1}^g [a_i^{-1},b_i]=\prod_{i=1}^g a_i^{-1}[b_i,a_i]a_i=\prod_{i=1}^g a_i^{-1}\sigma^{-2}a_i=\sigma^{-2g}=\sigma^2\; \text{since}\; \sigma^{2g+2}=1.
\end{equation*}
Hence $\rho$ induces a surjective homomorphism $\overline{\rho}\colon\thinspace B_{2}(\Sigma_g)/\Gamma_{3}(B_{2}(\Sigma_g)) \to G/\Gamma_{3}(G)$. Using the presentation of $G$ and the fact that $\Gamma_{2}(G)$ is the normal closure in $G$ of the commutators of the generators of $G$, we see that $\Gamma_{2}(G)=\ang{\sigma^{2}}$, and thus $\Gamma_{3}(G)$ is trivial. Therefore $\overline{\rho}$ is a surjective homomorphism from $B_{2}(\Sigma_g)/\Gamma_{3}(B_{2}(\Sigma_g))$ to $G$. Observe that $\overline{\rho}$ is not an isomorphism if $g=1$ because $\Gamma_{2}(B_{2}(\T))/\Gamma_{3}(B_{2}(\T))\cong \Z_2^3$ by \reth{gam3closed}\ref{it:gam23gn2}, and $\Gamma_{2}(G)/\Gamma_{3}(G)=\ang{\sigma^{2}} \cong \Z_2$. The construction of \rex{exo1} may be applied to $G$ if $g$ is odd, and composing with $\overline{\rho}$, shows that it may also be extended to the case $n=2$ to yield a representation of $B_2(\Sigma_g)$ in $S_{2^{g+2}}$ whose image is non Abelian.
\end{enumerate}
\end{rems}

\end{document}